\newtheorem{theorem}{Theorem}[section]
\newtheorem{corollary}[theorem]{Corollary}
\newtheorem{lemma}[theorem]{Lemma}
\newtheorem{example}[theorem]{Example}
\newtheorem{proposition}[theorem]{Proposition}
\newtheorem{question}[theorem]{Question}
\newtheorem{remark}[theorem]{Remark}
\theoremstyle{definition}
\newtheorem{definition}[theorem]{Definition}
\numberwithin{equation}{section}
\newcommand{\Z}{\mathbb Z}
\newcommand{\N}{\mathbb N}
\newcommand{\U}{\mathcal{U}}
\newcommand{\cal}{\mathcal}
\begin{document}

\title{Entropy dimension of measure preserving systems}

\author [Dou Dou, Wen Huang and Kyewon Koh Park]{Dou Dou, Wen Huang and Kyewon Koh Park}

\address{Department of Mathematics, Nanjing University,
Nanjing, Jiangsu, 210093, P.R. China} \email{doumath@163.com}

\address{Department of Mathematics, University of Science and Technology of China,
Hefei, Anhui, 230026, P.R. China} \email{wenh@mail.ustc.edu.cn}

\address{Center for Mathematical Challenges, Korea Institute for Advanced Study, Seoul 130-722, Korea}
\email{kkpark@kias.re.kr}

\subjclass[2010]{Primary: 37A35, 37A05, 28D20}
\thanks{}

\keywords {entropy dimension, dimension set, uniform dimension, joining}

\begin{abstract}
The notion of  metric entropy dimension is introduced to measure
the complexity of entropy zero dynamical systems.  For measure
preserving systems, we define entropy dimension via the dimension
of entropy generating sequences.  This combinatorial approach
provides us with a new insight to analyze the entropy zero
systems. We also define the dimension set of a system to
investigate the structure of the randomness of the factors of a system. The notion of a uniform
dimension in the class of entropy zero systems is introduced as a
generalization of a K-system in the case of positive entropy. We investigate the joinings among
entropy zero systems and prove the disjointness property among
some classes of entropy zero systems using the dimension sets.  Given a topological system, we compare topological entropy dimension with metric entropy dimension.
\end{abstract}

\maketitle

\section{Introduction}
Since entropy was introduced by Kolmogorov from information theory,
it has played an important role in the study of dynamical systems.
Entropy measures the chaoticity or unpredictability of a system. It
is well known as a complete invariant for the Bernoulli automorphism
class.  Properties of positive entropy systems have been studied in
many different respects along with their applications.  Comparing
with positive entropy systems, we have much less understanding and
less tools for entropy zero systems.  Entropy zero systems which are
called deterministic systems in the case of $\Z$-actions cover a
wide class of dynamical systems exhibiting different ``random''
behaviors or different level of complexities. They range from
irrational rotations on a circle, more generally isometry on a
compact metric space, Toeplitz systems to horocycle flows. Also many
of the physical systems studied recently show intermittent or weakly
chaotic behavior \cite{PM,ZE}. They have the property that a generic
orbit has sequences of 0's with density 1 and hence we would say
that they have very low complexity or randomness.  They do not have
finite invariant measures which are physically meaningful. Hence to
analyze the complexity of these systems, the notion of algorithemic
information content or Kolmogorov complexity has been employed
instead of the entropy. It measures the information content of
generic orbits of the system.

Many of general group actions like $\Z^n-$actions with
entropy zero have interesting subdynamics.  They exhibit diverse
complexities and their non-cocompact subgroup actions show very
different behavior \cite{BLind,Milnor,Park1,Park2}.  We may mention
a few known examples of entropy zero with their properties in the
case of $\Z^2-$actions:

\begin{enumerate}

\item
\begin{enumerate}
\item $h(\sigma^{(p,q)})= 0$ for $\forall(p,q)\in\mathbb{Z}^2$,
\item for any given $0<\alpha<2$,
$$\displaystyle\limsup_{n\rightarrow\infty}\frac{1}{n^{\alpha}}{H}(\bigvee_{(i,j)\in
R_n}\sigma^{-(i,j)}{P})>0;$$
\end{enumerate}

\item
\begin{enumerate}
\item $h(\sigma^{(p,q)})=0$ for $\forall (p,q)\in\mathbb{Z}^2$,
\item
$$\displaystyle\lim_{n\rightarrow\infty}\frac{1}{n}{H}(\bigvee_{(i,j)\in
R_n}\sigma^{-(i,j)}{P})>0;$$
\end{enumerate}

\item
\begin{enumerate}
\item $h(\sigma^{(1,0)})>0$ \item $h(\sigma^{(p,q)})=0$ for
$\forall(p,q)\neq(n,0)$,
\item
$$\displaystyle\lim_{n\rightarrow\infty}\frac{1}{n}{H}(\bigvee_{(i,j)\in
R_n}\sigma^{-(i,j)}{P})>0;$$
\end{enumerate}
\end{enumerate}
where $\{\sigma^{(p,q)}\}_{(p,q)\in \Z^2}$ is the $\Z^2$-action, $h(\sigma^{(p,q)})$ is the entropy of the single transformation $\sigma^{(p,q)}$, $R_n$ denotes the square of size $n\times n$ in $\Z^2$ and $P$ is some finite measurable partition.

The first example is by Katok and Thouvenot \cite{KT} and the second
one is constructed in \cite{Park2}.  Although the third example is
not written anywhere explicitly, it is known that the example is by
Ornstein and Weiss, also independently by Thouvenot.

In his study of Cellular Automaton maps \cite{Milnor}, J. Milnor
considered the Cellular Automaton maps together with horizontal
shifts as $\Z^2$-actions of zero entropy.  He introduced the notion
of directional entropy and investigated the properties of the
complexities of these systems via directional entropies and their
entropy geometry.  Boyle and Lind pursued the study of the entropy geometry
further in \cite{BLind}.  Besides Milnor's examples we have many
examples whose directional entropies are finite and continuous in
all directions including irrational directions \cite{Milnor,Park1}. And they have the property
$\lim \limits_{n\rightarrow\infty}\frac{1}{n}{H}(\bigvee_{(i,j)\in
R_n}\sigma^{-(i,j)}{P})>0$. However as was shown in the above example (2), there are many $\Z^2-$actions
whose directional entropy does not capture the complexity of a
system.  We may say that the examples (2) and (3) have complexity in
the order of n, while positive entropy systems have complexity  in
the order of $n^2$.

Cassaigne constructed a uniformly recurrent point and hence a
minimal system of a given subexponential orbit growth rate \cite{C}.  In
\cite{DHP}, inspired by the viewpoint of ``topological independence"(see \cite{HY,KL}), the authors gave the definition of topological entropy
dimension to analyze the entropy zero systems. It measures the the
sub-exponential but sup-polynomial topological complexity via the
growth rate of orbits. Together with the examples, some of the properties of
the entropy zero topological systems have been investigated. As was
shown in physical models, many examples of low complexity do not
carry finite invariant measure. Their meaningful invariant measures
are $\sigma$-finite. Examples of finite measure preserving systems
of subexponential growth rate were first constructed in \cite{FP}.
Katok and Thouvenot introduced the notion of slow entropy for
$\Z^2-$actions to show that certain measure preserving $\Z^2-$actions
are not realizable by two commuting Lipschitz continuous maps in \cite{KT}.
Since the ``natural'' extension of the definition of entropy to slow entropy
is not an isomorphism invariant, they use the number of $\epsilon$-balls
in the Hamming distance to define the slow entropy. It is clear that their definition
is easily applied to $\mathbb{Z}$-actions to differentiate the complexity.

We will introduce the
notion of the entropy generating sequence and positive entropy
sequence to understand the complexity of entropy zero systems in section 2. By the definition,
it is clear that the entropy generating sequence
is a sequence along which the system has some independence. First we
define the dimension of a subset of $\N$ of density zero and use the notion to define
the entropy dimension of a system via the entropy generating sequence.
It is clear that the properties should be further investigated to
understand the structure of zero entropy systems.
We hope that many
of the tools developed for the study of positive entropy class are
to be investigated in the class of a given entropy dimension.
For example, we ask if we can have $\alpha$-dimension Pinsker $\sigma$-algebra and
$\alpha$-dimension Bernoulli in the case that $\alpha-$entropy exists. We ask also if we have
some kind of regularity in the size of the atoms of the iterated
partition of these systems. Moreover since general group actions
have many ``natural'' examples of entropy zero with diverse
complexity, we need to extend our study to general group actions. We
believe the study of entropy dimension together with the study of
subgroup actions will lead us to the understanding of more challenging and interesting properties of
entropy zero general group actions.

We briefly describe the content of the paper.
In section 2, we introduce the notion of entropy generating sequence
and positive entropy sequence which are subsets of $\N$. For a given
subset of $\N$, we introduce the notion of the dimensions, upper and
lower, of a subset to measure the ``size'' of the subset. This
notion classifies the ``size'' of the subsets of density 0. We
show(Proposition 2.4) the relation between the dimensions of entropy
generating sequence and positive entropy sequence. For a
measure-preserving system we will define the metric entropy
dimension through the dimensions of entropy generating sequence and
positive entropy sequence. We will study many of the basic
properties of entropy dimension. In section 3, we define the
dimension set of a system to understand the structure of the
complexity of its factors. We also introduce the notion of uniform
dimension whose dimension set consists of a singleton.  Using the dimension sets, we also study
the property of disjointness among entropy zero systems. We prove a
theorem which is more general than the disjointness between K-mixing
systems and zero entropy systems.  In section 4, for a compact metric
space we consider the entropy dimension of a given open cover with
respect to a measure and show that the topological entropy dimension
is always bigger than or equal to the metric entropy dimension of a
topological system. We provide a class of examples of
uniform dimension in section 5. In a rough statement, we may
say that the property without a factor of smaller
entropy dimension corresponds to the K-mixing property without
zero entropy factors.  Our construction is based on the cutting and stacking method as in \cite{FP},
but it demands technical arguments to guarantee that no partition has
smaller entropy dimension. We need to make level sets of each step ``spread out"
through the columns of the later towers without the increase of
the sub-exponential growth rate of orbits.

We mention that we noticed recently that the entropy dimension was
first introduced in \cite{Ca}. And another related concept ``scaled entropy"
was introduced to distinguish Bernoullian K-automorphisms with equal entropy by Vershik in \cite{V}.
For the study of completely integrable Hamiltonian systems, Marco [19] defined two entropy type invariants: polynomial entropy and weak polynomial entropy, which can 
be applied to measure polynomial scale of complexity. 
Since we have started our work on the complexity of topological and metric
entropy zero systems (\cite{ADP,DHP,DP}), there are several papers published in different directions
in the area (\cite{CL,H,M}).  Clearly this is the beginning of the study of entropy zero systems
with many more open questions.

\section{Entropy dimension}
Let $(X,\mathcal{B},\mu,T)$ be a measure-theoretical dynamical
system (MDS, for short) and $\alpha\in \mathcal{P}_X$, where
$\mathcal{P}_X$ denotes the collection of finite
measurable partitions of $X$.

In the case of zero entropy, we want to generalize the definition of entropy to measure the growth rate of the iterated partitions. However it has been noticed in
\cite{FP} that for $P\in \mathcal{P}_X$ the nature extension $C(T,P)=\inf \{\beta:\limsup_{n\rightarrow \infty}\frac{1}{n^{\beta}}H_{\mu}(\bigvee_{i=0}^{n-1}T^{-i}P)=0\}$ is not an isomorphic invariant.
More precisely, the following was proved. If there exists a partition $P$ such that
$C(T,P)=\inf \{\beta:\limsup_{n\rightarrow \infty}\frac{1}{n^{\beta}}H_{\mu}(\bigvee_{i=0}^{n-1}T^{-i}P)=0\}=\alpha>0$, then for any $\alpha<\tau<1$ and $\epsilon>0$,
there exists a partition $\tilde{P}$ such that
\begin{align*}
  (1).\ & |P-\tilde{P}|<\epsilon, \text{ and } \\
  (2).\ & \inf \{\beta:\limsup_{n\rightarrow \infty}\frac{1}{n^{\beta}}H_{\mu}(\bigvee_{i=0}^{n-1}\tilde{P})=0\}=\tau.
\end{align*}

To make $C(T,P)$ an isomorphic invariant, they count the number of $\epsilon-$balls in the Hamming distance of $n-$names
and take the limit of $n$'s and $\epsilon$'s \cite{FP}.

Before we introduce the notion of entropy dimension for a
measure-preserving system, we define the dimension of a subset $S$
of positive integers $\N$. Let $S=\{s_1<s_2<\cdots\}$ be an
increasing sequence of positive integers. For $\tau\ge 0$, we define
$$\overline{D}(S,\tau)=\limsup_{n\rightarrow
\infty}\frac{n}{(s_n)^\tau} \text{ and
}\underline{D}(S,\tau)=\liminf_{n\rightarrow
\infty}\frac{n}{(s_n)^\tau}.$$ It is clear that
$\overline{D}(S,\tau)\le \overline{D}(S,\tau')$ if $\tau\ge \tau'\ge
0$ and $\overline{D}(S,\tau) \notin \{0, +\infty\}$ for at most one
$\tau\ge 0$. We define {\it the upper dimension of $S$} by
\begin{align*}
\overline{D}(S)=\inf \{\tau\geq 0:\overline{D}(S,\tau)=0\}=\sup
\{\tau\geq 0:\overline{D}(S,\tau)=\infty \}.
\end{align*}
Similarly,  $\underline{D}(S,\tau)\le \underline{D}(S,\tau')$ if
$\tau\ge \tau'\ge 0$ and $\underline{D}(S,\tau) \notin \{0,
+\infty\}$ for at most one $\tau\ge 0$. We define {\it the lower
dimension of $S$} by
\begin{align*}
\underline{D}(S)=\inf \{\tau \geq 0:\underline{D}(S,\tau)=0\} =\sup
\{ \tau\geq 0:\underline{D}(S,\tau)=\infty \}.
\end{align*}
Clearly $0\le \underline{D}(S)\le \overline{D}(S)\le 1$. When
$\overline{D}(S)=\underline{D}(S)=\tau$, we say $S$ has dimension
$\tau$.  For example, if $S$ has positive density, then
$\overline{D}(S)=\underline{D}(S)=1$ and if
$S=\{n^2|n=1,2,\cdots\}$, then clearly
$\overline{D}(S)=\underline{D}(S)=\frac{1}{2}$.

In the following, we will investigate the dimension of a special
kind of sequences, which is called {\it the entropy generating
sequence}.

Let $(X,\mathcal{B},\mu,T)$ be a MDS and $\alpha\in \mathcal{P}_X$. We say an increasing sequence
$S=\{s_1<s_2<\cdots\}$ of $\N$ is an {\it entropy generating
sequence} of $\alpha$ if
$$\liminf_{n\rightarrow\infty}\frac{1}{n}H_\mu(\bigvee_{i=1}^nT^{-s_i}\alpha )>0.$$
We say $S=\{s_1<s_2<\cdots\}$ of $\N$ is a {\it positive entropy
sequence} of $\alpha$ if the {\it sequence entropy} of $\alpha$ along the sequence $S$, which is defined by
$$h^S_{\mu}(T,\alpha):=
\limsup_{n\rightarrow\infty}\frac{1}{n} H_\mu
(\bigvee_{i=1}^nT^{-s_i}\alpha),$$
is positive.

Denote by ${\cal E}_\mu(T,\alpha)$ the set of all entropy
generating sequences of $\alpha$, and $\mathcal{P}_\mu(T,\alpha)$
by the set of all positive entropy sequences of $\alpha$. Clearly
$\mathcal{P}_\mu(T,\alpha)\supset \mathcal{E}_\mu(T,\alpha)$.

\begin{definition} \label{de-ms-1} Let $(X,\mathcal{B},\mu,T)$ be a MDS and $\alpha\in
\mathcal{P}_X$. We define
$$\overline{D}^e_\mu(T,\alpha)=\begin{cases} \sup \limits_{S\in \mathcal{E}_\mu(T,\alpha)} \overline{D}(
S)\, &\text{if } \mathcal{E}_\mu(T,\alpha)\neq \emptyset\\ 0 \,
&\text{if } \mathcal{E}_\mu(T,\alpha)=\emptyset \end{cases},$$
$$\overline{D}^p_\mu(T,\alpha)=\begin{cases} \sup \limits_{S\in \mathcal{P}_\mu(T,\alpha)} \overline{D}(S)\,
&\text{if } \mathcal{P}_\mu(T,\alpha)\neq \emptyset\\ 0 \, &\text{if
} \mathcal{P}_\mu(T,\alpha)=\emptyset \end{cases}.$$ Similarly, we
define $\underline{D}^e_\mu(T,\alpha)$ and
$\underline{D}^p_\mu(T,\alpha)$ by changing the upper dimension into
lower dimension.
\end{definition}

\begin{definition}\label{de-ms-2} Let $(X,\mathcal{B},\mu,T)$ be a MDS.
We define
$$\overline{D}^e_\mu(X,T)=\sup_{\alpha \in \mathcal{P}_X}
\overline{D}^e_\mu(T,\alpha), \ \ \
\underline{D}^e_\mu(X,T)=\sup_{\alpha \in \mathcal{P}_X}
\underline{D}^e_\mu(T,\alpha),$$
$$\overline{D}^p_\mu(X,T)=\sup_{\alpha \in \mathcal{P}_X}
\overline{D}^p_\mu(T,\alpha), \ \ \
\underline{D}^p_\mu(X,T)=\sup_{\alpha \in \mathcal{P}_X}
\underline{D}^p_\mu(T,\alpha).$$
\end{definition}

Since the sequence entropies along a given sequence are the same for mutually conjugated systems, we can deduce that these four quantities are also conjugacy invariants. But the following
proposition shows that $\overline{D}^p_\mu(X,T)$ can only take trival values $0$ and $1$. A MDS $(X,\mathcal{B},\mu,T)$ is said to be {\it null} if
$h^S_{\mu}(T,\alpha)=0$ for any sequence $S$ of $\N$ and $\alpha \in
\mathcal{P}_X$. A well known result by Kushnirenko \cite{Ku} states that a MDS $(X,\mathcal{B},\mu,T)$ has discrete spectrum if and only if it is null.

\begin{proposition}\label{pro-dp-1} Let $(X,\mathcal{B},\mu,T)$ be a MDS. Then
$$\overline{D}^p_\mu(T,\alpha)=\begin{cases} 1\, &\text{if }
\mathcal{P}_\mu(T,\alpha)\neq \emptyset\\ 0 \, &\text{if }
\mathcal{P}_\mu(T,\alpha)=\emptyset \end{cases}   \ \ \text{ for }
\alpha\in \mathcal{P}_X.$$ Moreover, $\overline{D}^p_\mu(X,T)=0$ or
$1$, and $\overline{D}^p_\mu(X,T)=0$ if and only if
$(X,\mathcal{B},\mu,T)$ is null.
\end{proposition}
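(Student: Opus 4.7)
The plan begins with a routine reduction of the system-level claim to the partition-level dichotomy. Since $\overline{D}^p_\mu(X,T)=\sup_{\alpha\in\mathcal{P}_X}\overline{D}^p_\mu(T,\alpha)$ and nullity is by definition the statement that $\mathcal{P}_\mu(T,\alpha)=\emptyset$ for every $\alpha$, both the membership of $\overline{D}^p_\mu(X,T)$ in $\{0,1\}$ and the characterization of the value zero will transfer directly from the partition-level statement. The trivial direction of the partition-level statement, namely $\mathcal{P}_\mu(T,\alpha)=\emptyset\Rightarrow \overline{D}^p_\mu(T,\alpha)=0$, is immediate from the definition. So the essential content is to prove: $\mathcal{P}_\mu(T,\alpha)\neq\emptyset\Rightarrow\overline{D}^p_\mu(T,\alpha)=1$.

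For this implication I intend to produce, for every $\tau\in(0,1)$, some $S^\tau\in\mathcal{P}_\mu(T,\alpha)$ with $\overline{D}(S^\tau)\geq\tau$, which forces $\overline{D}^p_\mu(T,\alpha)\geq\tau$ for every $\tau<1$ and hence equal to $1$. Starting from a given $S=(s_i)_{i\geq1}\in\mathcal{P}_\mu(T,\alpha)$ with $h:=h^S_\mu(T,\alpha)>0$, extract witness indices $n_k\to\infty$ satisfying $H_\mu(\bigvee_{i=1}^{n_k}T^{-s_i}\alpha)\geq (h/2)n_k$, and pass to a sufficiently sparse subsequence (say $n_{k+1}\geq 2n_k$). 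Assemble $S^\tau=\bigsqcup_k(B_k+L_k)$ as a disjoint union of translated blocks, each $B_k$ containing the generating subset $\{s_1,\ldots,s_{n_k}\}$ together with auxiliary elements boosting the cardinality up to $|B_k|\gtrsim(\max B_k)^\tau$, with the shifts $L_k$ chosen just large enough for disjointness but small relative to $\max B_k$, and $|B_k|$ dominating $\sum_{j<k}|B_j|$. Then $T$-invariance yields $H_\mu(\bigvee_{s\in B_k+L_k}T^{-s}\alpha)\geq(h/2)n_k$, the domination argument gives a positive entropy rate at the end of each block (so $S^\tau\in\mathcal{P}_\mu(T,\alpha)$), and the dimension ratio $|B_k|/(L_k+\max B_k)^\tau$ remains bounded away from zero (so $\overline{D}(S^\tau)\geq\tau$).

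The main obstacle is that the simultaneous constraints $|B_k|=O(n_k)$ (for entropy rate to survive) and $|B_k|\geq s_{n_k}^\tau$ (for the dimension) force $n_k\gtrsim s_{n_k}^\tau$ along the witnesses, i.e. $\overline{D}(S)\geq\tau$. This is not automatic for a general positive entropy sequence, so the crucial technical preliminary is to replace the given $S$ by a positive entropy sequence of high enough upper dimension before applying the block construction. This replacement is delivered by a spectral argument: $\mathcal{P}_\mu(T,\alpha)\neq\emptyset$ is equivalent, via Kushnirenko, to $\sigma(\alpha)$ meeting the continuous-spectrum subspace of $T$, so Wiener's theorem applied to an $f\in L^2(\sigma(\alpha))$ with continuous spectral measure forces the decorrelation set $\{n:|\langle T^nf,f\rangle|<\epsilon\}$ to have density one; passing to a difference-closed subset and converting near-orthogonality of translates of $f$ into an entropy lower bound (via a Gram-matrix / multilinear argument) then yields a positive entropy sequence of density arbitrarily close to $1$, hence of upper dimension arbitrarily close to $1$, completing the proof.
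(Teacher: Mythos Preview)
Your reduction to the partition-level dichotomy is correct, and you rightly identify that the content is to manufacture, from any $S\in\mathcal{P}_\mu(T,\alpha)$, another positive entropy sequence of large upper dimension. The obstacle you hit, however, is an artifact of your block construction: by requiring each $B_k$ to be simultaneously entropy-producing and dimension-boosting you force $|B_k|\asymp n_k$ and $|B_k|\gtrsim s_{n_k}^\tau$ at once. The paper decouples these two roles. Choose witnesses $n_j$ with $H_\mu\bigl(\bigvee_{i\le n_j}T^{-s_i}\alpha\bigr)\ge \tfrac{a}{2}n_j$ and additionally $n_{j+1}\ge 2s_{n_j}$, and set
\[
F \;=\; S\cup\{1,\dots,n_1\}\cup\bigcup_{j\ge 1}\{s_{n_j}+1,\dots,n_{j+1}\}.
\]
Each inserted interval lies \emph{above} the corresponding $s_{n_j}$, so $F\cap[1,s_{n_j}]\subset[1,n_j]\cup\{s_1,\dots,s_{n_j}\}$ has at most $2n_j$ elements, while the entropy of the join over $F\cap[1,s_{n_j}]$ dominates that over $\{s_1,\dots,s_{n_j}\}$; hence the $\limsup$ rate along $F$ is at least $a/2$, i.e.\ $F\in\mathcal{P}_\mu(T,\alpha)$. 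Since $F\supset[s_{n_j}+1,n_{j+1}]$ with $n_{j+1}\ge 2s_{n_j}$, one gets $\overline{D}(F)=1$ outright. No preliminary upgrading of $S$ is needed, and no separate argument for each $\tau<1$.

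Your proposed spectral workaround is where the genuine gap lies. The step ``converting near-orthogonality of translates of $f$ into an entropy lower bound via a Gram-matrix/multilinear argument'' is asserted but not substantiated, and there is no standard result that does this. Worse, the stated conclusion---a positive entropy sequence of density arbitrarily close to $1$---is impossible in the only interesting case: when $h_\mu(T,\alpha)=0$ (if $h_\mu(T,\alpha)>0$ then $\mathbb{N}$ itself already works), any $S$ of positive lower density $d>0$ satisfies
\[
h^S_\mu(T,\alpha)\;\le\;\limsup_n\frac{s_n}{n}\cdot\frac{1}{s_n}H_\mu\Bigl(\bigvee_{i=0}^{s_n}T^{-i}\alpha\Bigr)\;\le\;\frac{1}{d}\,h_\mu(T,\alpha)\;=\;0.
\]
So no positive entropy sequence of positive lower density exists there; the Wiener decorrelation set, which has full density, cannot itself (nor any positive-lower-density subset of it) serve as the sought sequence, and near-orthogonality of iterates of a single $L^2$ function is simply not the right currency for sequence entropy of a partition. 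The elementary padding construction above is both simpler and actually correct.
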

\begin{proof} When $\mathcal{P}_\mu(T,\alpha)=\emptyset$,
$\overline{D}^p_\mu(T,\alpha)=0$. Now assume
$\mathcal{P}_\mu(T,\alpha)\neq \emptyset$, thus there exists $S=\{
s_1<s_2<\cdots \}\subset \N$ such that
$$\limsup_{n\rightarrow +\infty} \frac{1}{n} H_\mu(\bigvee \limits_{i=1}^n
T^{-s_i}\alpha)=a>0.$$ Next we take $1\le n_1<n_2<n_3<\cdots$ such
that $n_{j+1}\ge 2 s_{n_j}$ for each $j\in \mathbb{N}$ and at the
same time $\limsup_{j\rightarrow +\infty} \frac{1}{n_j}
H_\mu(\bigvee \limits_{i=1}^{n_j} T^{-s_i}\alpha)=a$. Then put
$$F=S\cup \{1,2,\cdots,n_1\} \cup \bigcup_{i=1}^\infty \{
s_{n_i}+1,s_{n_i}+2,\cdots, n_{i+1}\}.$$ For simplicity, we write
$F=\{ f_1<f_2<\cdots\}$. Notice that
$$F\cap [1,s_{n_j}]\subset
[1,n_j]\cup(F\cap[n_j+1,s_{n_j}])\subset [1,n_j]\cup \{
s_1,s_2,\cdots,s_{n_j}\},$$ hence $|F\cap [1,s_{n_j}]|\le2n_j$. So
we have
\begin{align*}
&\hskip0.5cm \limsup_{n\rightarrow +\infty} \frac{1}{n}
H_\mu(\bigvee \limits_{i=1}^n T^{-f_i}\alpha) \ge
\limsup_{j\rightarrow +\infty} \frac{H_\mu(\bigvee
\limits_{i=1}^{n_j} T^{-s_i}\alpha)}{|F\cap
[1,s_{n_j}]|}\\
&\ge \limsup_{j\rightarrow +\infty} \frac{H_\mu(\bigvee
\limits_{i=1}^{n_j} T^{-s_i}\alpha)}{2n_j}=\frac{a}{2}>0,
\end{align*}
therefore $F\in \mathcal{P}_\mu(T,\alpha)$. Since $n_{j+1}\ge 2
s_{n_j}$ for each $j\in \mathbb{N}$, it is easy to see that
$\overline{D}(F)=1$. This implies $\overline{D}^p_\mu(T,\alpha)=1$
as $F\in \mathcal{P}_\mu(T,\alpha)$.
\end{proof}

In the following, we investigate the relations among these
dimensions.
\begin{proposition}\label{prop-kkk} Let $(X,\mathcal{B},\mu,T)$ be a MDS and $\alpha\in
\mathcal{P}_X$. Then
$$\underline{D}^e_\mu(T,\alpha)\le \overline{D}^e_\mu(T,\alpha)= \underline{D}^p_\mu(T,\alpha)
\le \overline{D}^p_\mu(T,\alpha).$$
\end{proposition}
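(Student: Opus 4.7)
The outer inequalities $\underline{D}^e_\mu(T,\alpha) \le \overline{D}^e_\mu(T,\alpha)$ and $\underline{D}^p_\mu(T,\alpha) \le \overline{D}^p_\mu(T,\alpha)$ follow at once from the pointwise bound $\underline{D}(S) \le \overline{D}(S)$ which holds for every $S \subset \N$. The substantive content is the middle equality $\overline{D}^e_\mu(T,\alpha) = \underline{D}^p_\mu(T,\alpha)$, which I prove by splitting it into its two directions.

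For $\overline{D}^e_\mu \le \underline{D}^p_\mu$: given $S \in \mathcal{E}_\mu(T,\alpha)$ with $\overline{D}(S) = \beta$ and any $\tau < \beta$, extract a subsequence $n_k$ with $n_k/s_{n_k}^\tau \to \infty$ (possible because $\overline{D}(S,\tau) = \infty$). Take a reference set of uniform dimension $\tau$, e.g.\ $U = \{\lceil m^{1/\tau}\rceil : m \ge 1\}$, so $|U \cap [1,M]| = \Theta(M^\tau)$ and $\underline{D}(U) = \tau$, and set $S' := S \cup U$. At $M = s_{n_k}$ we have $|S'\cap[1,M]| \le n_k + O(s_{n_k}^\tau) = (1+o(1))n_k$, while
\[
H_\mu\Bigl(\bigvee_{s \in S'\cap[1,M]} T^{-s}\alpha\Bigr) \ge H_\mu\Bigl(\bigvee_{i=1}^{n_k} T^{-s_i}\alpha\Bigr) \ge c\, n_k
\]
for some $c > 0$ and all $k$ large, using $S \in \mathcal{E}_\mu(T,\alpha)$. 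Hence $S' \in \mathcal{P}_\mu(T,\alpha)$, and $S' \supset U$ forces $\underline{D}(S') \ge \underline{D}(U) = \tau$. Letting $\tau \uparrow \beta$ and then taking the sup over $S$ yields $\underline{D}^p_\mu \ge \overline{D}^e_\mu$.

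For the reverse bound $\underline{D}^p_\mu \le \overline{D}^e_\mu$ (the crux): given $S \in \mathcal{P}_\mu(T,\alpha)$ with $\underline{D}(S) = \beta$ and $a := \limsup_n H(n)/n > 0$, where $H(n) := H_\mu(\bigvee_{i=1}^n T^{-s_i}\alpha)$, fix a subsequence $n_k$ with $H(n_k)/n_k \ge a/2$. Pass to the set of big-increment indices
\[
E := \{\, n \in \N : H(n) - H(n-1) \ge a/4 \,\}.
\]
Since each increment is at most $\log|\alpha|$, the identity $H(n_k) = \sum_{n \le n_k}(H(n)-H(n-1)) \ge a n_k/2$ yields the pigeonhole bound $|E \cap [1,n_k]| \ge c_1 n_k$ with $c_1 := a/(4\log|\alpha|)$. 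Enumerate $E = \{e_1 < e_2 < \cdots\}$ and set $S' := \{\, s_{e_j} : j \ge 1 \,\}$. The chain rule combined with the monotonicity of conditional entropy (conditioning on a smaller $\sigma$-algebra only increases it), applied to the inclusion $\{s_{e_i}: i<j\} \subset \{s_l: l<e_j\}$, gives
\[
H_\mu\Bigl(\bigvee_{j=1}^N T^{-s_{e_j}}\alpha\Bigr) = \sum_{j=1}^N H_\mu\Bigl(T^{-s_{e_j}}\alpha \,\Big|\, \bigvee_{i<j} T^{-s_{e_i}}\alpha\Bigr) \ge \sum_{j=1}^N \bigl(H(e_j)-H(e_j-1)\bigr) \ge \tfrac{a}{4}\, N,
\]
so $S' \in \mathcal{E}_\mu(T,\alpha)$. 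Finally $\underline{D}(S) = \beta$ gives $s_n \le n^{1/(\beta-\eps)}$ for all large $n$ and each $\eps > 0$, and along $j := |E \cap [1,n_k]|$ one has $e_j \le n_k \le j/c_1$, so $s_{e_j} \le (j/c_1)^{1/(\beta-\eps)}$ along this subsequence; this forces $\overline{D}(S') \ge \beta-\eps$, and letting $\eps \downarrow 0$ gives $\overline{D}^e_\mu \ge \underline{D}^p_\mu$.

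The main obstacle is this second direction: a naive attempt to build $S' \in \mathcal{E}_\mu$ from $S$ by packing shifted copies of initial segments $\{s_1,\ldots,s_{n_k}\}$ fails because the $\liminf$ required by $\mathcal{E}_\mu$ can collapse inside a single block whenever the gaps $n_{k+1}/n_k$ along the good subsequence are unbounded, a scenario $S \in \mathcal{P}_\mu$ does not rule out. The big-increment thinning $E$ bypasses this entirely, converting the $\limsup$ hypothesis into a uniform \emph{per-element} entropy lower bound via conditional-entropy monotonicity; once this is in place, the dimension estimates reduce to routine density bookkeeping.
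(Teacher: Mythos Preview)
Your proof is correct. The trivial inequalities and the direction $\overline{D}^e_\mu \le \underline{D}^p_\mu$ match the paper's argument essentially verbatim (union of $S$ with a reference set $\{\lfloor m^{1/\tau}\rfloor\}$ of lower dimension $\tau$).

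For the hard direction $\underline{D}^p_\mu \le \overline{D}^e_\mu$, however, your route is genuinely different and considerably shorter. The paper proceeds by a block construction: after choosing a rapidly growing subsequence $(n_k)$ along which $H(n_k)\ge b n_k$, it proves a combinatorial Claim extracting from each block $\{n_{k-1}+1,\dots,n_k\}$ a subset $F_k$ of size $\ge c n_k$ on every nonempty subset of which the entropy is proportionally large; gluing the $F_k$ and then running a three-case analysis on intervals $[m_1,m_2]$ yields a sequence $F\in\mathcal{E}_\mu(T,\alpha)$ with $\overline{D}(F)\ge\tau$ together with the uniform interval bound (``Fact A''). Your big-increment set $E=\{n:H(n)-H(n-1)\ge a/4\}$ bypasses the block decomposition entirely: the single monotonicity step $H_\mu(T^{-s_{e_j}}\alpha\mid\bigvee_{i<j}T^{-s_{e_i}}\alpha)\ge H(e_j)-H(e_j-1)$ immediately gives the $\liminf$ lower bound, and the density estimate $|E\cap[1,n_k]|\ge c_1 n_k$ via pigeonhole handles the dimension. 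This is more elementary and avoids the case analysis.

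It is worth noting that your construction in fact recovers the paper's stronger Fact~A as well: the same conditional-entropy inequality applied termwise to $\sum_{j=m_1}^{m_2}$ gives $H_\mu(\bigvee_{j=m_1}^{m_2}T^{-s_{e_j}}\alpha)\ge (m_2-m_1+1)\,a/4$ for every interval, which the paper later needs for the invariance $\overline{D}_\mu(T,\alpha)=\overline{D}_\mu(T^{-1},\alpha)$ in Proposition~\ref{pro-D-ite}. So nothing is lost by your simplification.
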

\begin{proof} 1). $\underline{D}^e_\mu(T,\alpha)\le
\overline{D}^e_\mu(T,\alpha)$ and $\underline{D}^p_\mu(T,\alpha) \le
\overline{D}^p_\mu(T,\alpha)$ are obvious by Definition
\ref{de-ms-1}.

2). We will show that $\overline{D}^e_\mu(T,\alpha)\le
\underline{D}^p_\mu(T,\alpha)$. If $\overline{D}^e_\mu(T,\alpha)=0$,
then it is obvious that $\overline{D}^e_\mu(T,\alpha)\le
\underline{D}^p_\mu(T,\alpha)$. Now we assume that
$\overline{D}^e_\mu(T,\alpha)>0$, and $\tau\in
(0,\overline{D}^e_\mu(T,\alpha))$ is given.

There exists $S=\{ s_1<s_2<\cdots\}\in \mathcal{E}_\mu(T,\alpha)$
with $\overline{D}(S)>\tau$, i.e. $\limsup \limits_{n\rightarrow
+\infty} \frac{n}{s_n^\tau}=+\infty$. Hence
\begin{align}\label{eq-lim-1}
\limsup_{n\rightarrow +\infty} \frac{n}{n+s_n^\tau}=1.
\end{align}
Next we put $F=S\cup \{ \lfloor n^{\frac{1}{\tau}} \rfloor: n\in
\mathbb{N}\}$, where $\lfloor r\rfloor$ denotes the largest integer
less than or equal to $r$. Clearly $\underline{D}(F)\ge \tau$.

Let $F=\{ f_1<f_2<\cdots\}$. Then for each $n\in \mathbb{N}$ there
exists unique $m(n)\in \mathbb{N}$ such that $s_n=f_{m(n)}$. Since
$$\{ s_1,s_2,\cdots,s_n\}\subseteq \{
f_1,f_2,\cdots,f_{m(n)}\} \subseteq \{ s_1,s_2,\cdots,s_n \}\cup \{
\lfloor k^{\frac{1}{\tau}} \rfloor: k\le s_n^{\tau} \},$$ we have
$n\le m(n)\le n+s_n^\tau$. Combining this with \eqref{eq-lim-1}, we
get \begin{align}\label{eq-lim-2} \limsup_{n\rightarrow +\infty}
\frac{n}{m(n)}=1.
\end{align}

Now we have
\begin{align*}
\limsup_{m\rightarrow +\infty} \frac{H_\mu(\bigvee \limits_{i=1}^m
T^{-f_i}\alpha)}{m} &\ge \limsup_{n\rightarrow +\infty}
\frac{H_\mu(\bigvee \limits_{i=1}^{m(n)} T^{-f_i}\alpha)}{m(n)}\\
&\ge \limsup_{n\rightarrow +\infty} \frac{H_\mu(\bigvee
\limits_{i=1}^{n}
T^{-s_i}\alpha)}{n}\frac{n}{m(n)}\\
&\ge (\liminf_{n\rightarrow +\infty} \frac{H_\mu(\bigvee
\limits_{i=1}^{n} T^{-s_i}\alpha)}{n}) \cdot (\limsup_{n\rightarrow
+\infty}
\frac{n}{m(n)}) \\
&= \liminf_{n\rightarrow +\infty} \frac{H_\mu(\bigvee
\limits_{i=1}^{n}
T^{-s_i}\alpha)}{n}  \ \ \  \text{(by \eqref{eq-lim-2})}\\
&>0 \ \ \ \ \ \ \ \ \ (\text{since } S\in
\mathcal{E}_\mu(T,\alpha)).
\end{align*}
This implies $F\in \mathcal{P}_\mu(T,\alpha)$. Hence
$\underline{D}^p_\mu(T,\alpha)\ge \underline{D}(F)\ge \tau$. Since
$\tau$ is arbitrary in $(0,\overline{D}^e_\mu(T,\alpha))$, we have $\overline{D}^e_\mu(T,\alpha)\le
\underline{D}^p_\mu(T,\alpha)$.

\medskip
3). We need to prove that$\underline{D}^p_\mu(T,\alpha)\le
\overline{D}^e_\mu(T,\alpha)$.  If
$\underline{D}^p_\mu(T,\alpha)=0$, then it is obvious that
$\underline{D}^p_\mu(T,\alpha)\le \overline{D}^e_\mu(T,\alpha)$. Now
we assume that $\underline{D}^p_\mu(T,\alpha)>0$ and $\tau\in
(0,\underline{D}^p_\mu(T,\alpha))$ is given.

In the following, we show that

\noindent{\bf Fact A.} There exist a sequence $F=\{
f_1<f_2<\cdots\}$ of natural numbers and a real number $d>0$ such
that $\overline{D}(F)\ge \tau$ and for any $1\le m_1\le m_2$,
\begin{align}\label{eq-m1m2a}
H_\mu(\bigvee_{i=m_1}^{m_2}T^{-f_i}\alpha)\ge (m_2+1-m_1)d.
\end{align}
Moreover by \eqref{eq-m1m2a} we know $F\in
\mathcal{E}_\mu(T,\alpha)$. Hence $\overline{D}^e_\mu(T,\alpha)\ge
\overline{D}(F)\ge \tau$. Finally since $\tau$ is arbitrary, we have
$\overline{D}^e_\mu(T,\alpha)\ge \underline{D}^p_\mu(T,\alpha)$.

\medskip
Now it remains to prove Fact A. First, there exists $S=\{
s_1<s_2<\cdots\}\in \mathcal{P}_\mu(T,\alpha)$ with
$\underline{D}(S)>\tau$, i.e. $\liminf \limits_{n\rightarrow
+\infty} \frac{n}{s_n^\tau}=+\infty$. Hence there exists $a>0$ such
that
\begin{align}\label{ggj-eq0}
a n\ge s_n^\tau
\end{align}
 for all $n\in \mathbb{N}$.

Since $S\in \mathcal{P}_\mu(T,\alpha)$, there exist an increasing
sequence $\{n_1<n_2<\cdots<n_k<\cdots\}$ of positive integers and
$0<b<4$ such that $H_\mu(\bigvee \limits_{i=1}^{n_k}
T^{-s_i}\alpha)\ge n_k b$ for all $k\in \mathbb{N}$. Without loss of
generality(if necessary we choose a subsequence), we assume that
$n_{k+1}\ge \frac{4(H_\mu(\alpha)+1)}{b}\sum \limits_{j=1}^k n_j$
for all $k\in \mathbb{N}$. Let $c=\frac{b}{4(H_\mu(\alpha)+1)}$ and
$n_0=0$. Then $0<c<1$ and we have

\medskip
\noindent{\bf Claim:} For each $k\in \mathbb{N}$, there exist
$l_k\in \mathbb{N}$ and
$$F_k:=\{
i_1^k<i_2^k<\cdots<i_{l_k}^k\} \subseteq
\{n_{k-1}+1,n_{k-1}+2,\cdots,n_{k}\}$$ such that $cn_{k}\le l_k\le
n_{k}-n_{k-1}$ and $H_\mu(\bigvee \limits_{i\in F'_k} T^{-s_i}\alpha
)\ge |F_k'| \frac{b}{4}$ for each $\emptyset\neq F_k'\subseteq F_k$.

\begin{proof}[Proof of claim] Assume that the claim is not true.
Then for some $k\in \mathbb{N}$ there exist $w\in \mathbb{N}$ and $E_1,E_2,\cdots,E_w\subseteq
\{ n_{k-1}+1,n_{k-1}+2,\cdots,n_k\}$ such that $1\le
|E_1|,|E_2|,\cdots, |E_w|< cn_k$, $E_i\cap E_j=\emptyset$ for any
$1\le i< j\le w$ and $\bigcup \limits_{i=1}^w E_i=\{
n_{k-1}+1,n_{k-1}+2,\cdots,n_k\}$ and for $1\le j\le w-1$,
$H_\mu(\bigvee \limits_{t\in E_j} T^{-s_t}\alpha )<|E_j|
\frac{b}{4}$. This implies that
\begin{align*}
&\hskip0.5cm H_\mu(\bigvee_{i=1}^{n_k} T^{-s_i}\alpha)\le
H_\mu(\bigvee_{i=1}^{n_{k-1}} T^{-s_i}\alpha)+\sum_{j=1}^w
H_\mu(\bigvee_{t\in E_j} T^{-s_t}\alpha )\\
&\le n_{k-1} H_\mu(\alpha)+\sum_{j=1}^{w-1} |E_j|
\frac{b}{4}+|E_w|H_\mu(\alpha) \le \frac{b}{4}
n_{k}+\frac{b}{4}(n_k-n_{k-1})+cn_kH_\mu(\alpha)\\
&\le \frac{b}{4} n_{k}+\frac{b}{4}n_k+\frac{b}{4} n_k<bn_k,
\end{align*}
a contradiction.  This completes the proof of the Claim.
\end{proof}

Let $F=\bigcup \limits_{k=1}^\infty \{ s_i: i\in F_k\}$. For
simplicity, we write $F=\{ f_1<f_2<\cdots\}$. Then
\begin{align*}
\overline{D}(F,\tau)&=\limsup_{m\rightarrow +\infty}
\frac{m}{f_m^\tau}\ge \limsup_{v\rightarrow +\infty}
\frac{\sum_{k=1}^v l_k }{(f_{\sum_{k=1}^v
l_k})^\tau}=\limsup_{v\rightarrow +\infty} \frac{\sum_{k=1}^v l_k
}{(s_{i^v_{l_v}})^\tau}\\
&\ge \limsup_{v\rightarrow +\infty} \frac{l_v }{(s_{n_v})^\tau}\ge
\limsup_{v\rightarrow +\infty}
\frac{l_v }{an_v} \ \ \ \ (\text{by \eqref{ggj-eq0}})\\
&\ge \limsup_{v\rightarrow +\infty} \frac{c n_v }{an_v}\ge
\frac{c}{a}>0.
\end{align*}
Hence $\overline{D}(F)\ge \tau$.

For a given $m\in \mathbb{N}$, there exists a unique $k(m)\in
\mathbb{N}$ such that $\sum \limits_{k=0}^{k(m)-1} l_k< m \le\sum
\limits_{k=1}^{k(m)}l_k$, where $l_0=0$. Set $r(m)=m-\sum
\limits_{k=0}^{k(m)-1} l_k$. Then $f_m=s_{i_{r(m)}^{k(m)}}$. Now for
$1\le m_1\le m_2$, there are three cases.

\medskip

Case 1: $k(m_1)=k(m_2)$. Then
\begin{align*}
H_{\mu}(\bigvee_{i=m_1}^{m_2} T^{-f_i}\alpha ) &= H_\mu(
\bigvee_{j=r(m_1)}^{r(m_2)}  T^{-s_{i_j^{k(m_1)}}}\alpha) \ge \frac{b}{4} (r(m_2)+1-r(m_1))  \ \ \ (\text{by Claim})\\
&=\frac{b}{4} (m_2-m_1+1).
\end{align*}

\medskip

Case 2: $k(m_2)=k(m_1)+1$. Then
\begin{align*}
H_{\mu}(\bigvee_{i=m_1}^{m_2} T^{-f_i}\alpha ) &= H_\mu(
\bigvee_{j=r(m_1)}^{l_{k(m_1)}} T^{-s_{i_j^{k(m_1)}}}\alpha \vee
\bigvee_{j=1}^{r(m_2)} T^{-s_{i_j^{k(m_2)}}}\alpha)\\
&\ge \frac{1}{2}\left( H_\mu( \bigvee_{j=r(m_1)}^{l_{k(m_1)}}
T^{-s_{i_j^{k(m_1)}}}\alpha)+ H_\mu(\bigvee_{j=1}^{r(m_2)}
T^{-s_{i_j^{k(m_2)}}}\alpha) \right)\\
& \ge \frac{b}{8} \left( (l_{k(m_1)}+1-r(m_1))+r(m_2)
\right)=\frac{b}{8}(m_2-m_1+1).
\end{align*}

\medskip

Case 3: $k(m_2)\ge k(m_1)+2$. Then
\begin{align*}
H_{\mu}(\bigvee_{i=m_1}^{m_2} T^{-f_i}\alpha ) &\ge H_\mu(
\bigvee_{j=1}^{l_{k(m_2)-1}} T^{-s_{i_j^{k(m_2)-1}}}\alpha \vee
\bigvee_{j=1}^{r(m_2)} T^{-s_{i_j^{k(m_2)}}}\alpha) \\
&\ge \frac{1}{2}\left( H_\mu( \bigvee_{j=1}^{l_{k(m_2)-1}}
T^{-s_{i_j^{k(m_2)-1}}}\alpha)+ H_\mu(\bigvee_{j=1}^{r(m_2)}
T^{-s_{i_j^{k(m_2)}}}\alpha) \right)\\
& \ge \frac{b}{8} ( l_{k(m_2)-1}+r(m_2) )\ge
\frac{b}{8}(cn_{k(m_2)-1}+r(m_2)) \
\text{(by Claim)}\\
& \ge \frac{b}{8}(c\sum_{j=1}^{k(m_2)-1}l_j+cr(m_2))  \ \ \ \
\text{(by Claim)}\\
&=\frac{bc}{8}m_2\ge \frac{bc}{8}(m_2-m_1+1).
\end{align*}

Let $d=\frac{bc}{8}$. Then \eqref{eq-m1m2a} follows from the above
three cases.
\end{proof}

The following Theorem is a direct application of Proposition
\ref{prop-kkk}.
\begin{theorem}\label{thm-re} Let $(X,\mathcal{B},\mu,T)$ be a MDS, then
$\overline{D}_e(X,T)=\underline{D}_p(X,T)$.
\end{theorem}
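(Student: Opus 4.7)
The statement is a direct corollary of Proposition \ref{prop-kkk}, so the proof plan is very short and essentially mechanical: transport the identity for a fixed partition up to the level of the whole system by taking suprema.

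My plan is to start from the middle equality in Proposition \ref{prop-kkk}, namely
\[
\overline{D}^e_\mu(T,\alpha)=\underline{D}^p_\mu(T,\alpha),
\]
which holds for every $\alpha\in\mathcal{P}_X$. Taking the supremum of both sides over all finite measurable partitions $\alpha\in\mathcal{P}_X$ gives
\[
\sup_{\alpha\in\mathcal{P}_X}\overline{D}^e_\mu(T,\alpha)=\sup_{\alpha\in\mathcal{P}_X}\underline{D}^p_\mu(T,\alpha).
\]
By Definition \ref{de-ms-2}, the left-hand side is $\overline{D}^e_\mu(X,T)$ and the right-hand side is $\underline{D}^p_\mu(X,T)$, which is exactly the claimed equality (matching the notation of the theorem, where the superscripts $e,p$ and the subscript $\mu$ are compressed into the subscripts $e,p$).

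There is no real obstacle in this argument: the substantive work, in particular the construction in Fact A that produces an entropy generating sequence of upper dimension at least $\tau$ out of a positive entropy sequence of lower dimension at least $\tau$, has already been carried out at the level of a single partition inside the proof of Proposition \ref{prop-kkk}. The passage to the system-level dimensions is therefore nothing more than the fact that $\sup$ commutes with pointwise equality, so the theorem follows with no further estimates.
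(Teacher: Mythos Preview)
Your proposal is correct and matches the paper's approach exactly: the paper itself gives no separate proof, simply stating that the theorem ``is a direct application of Proposition \ref{prop-kkk},'' which is precisely the supremum-over-partitions argument you wrote out.
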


By Proposition \ref{prop-kkk} and Theorem \ref{thm-re}, we have the
following definitions.
\begin{definition} \label{de-s-3} Let $(X,\mathcal{B},\mu, T)$ be a MDS and $\alpha\in
\mathcal{P}_X$. We define
$$\overline {D}_\mu(T,\alpha):=\overline{D}^e_\mu(T,\alpha)= \underline{D}^p_\mu(T,\alpha),$$
which is called {\it the upper entropy dimension of $\alpha$}. And we define $$\underline {D}_\mu(T,\alpha):=\underline{D}^e_\mu(T,\alpha)$$ to
be {\it the lower entropy dimension of $\alpha$}. When $\overline {D}_\mu(T,\alpha)=\underline {D}_\mu(T,\alpha)$, we note this quantity ${D}_\mu(T,\alpha)$,
{\it the entropy dimension of $\alpha$}.
\end{definition}

\begin{definition}\label{de-s-4} Let $(X,\mathcal{B},\mu, T)$ be a MDS.
We define
$$\overline {D}_\mu(X,T)=\sup_{\alpha \in \mathcal{P}_X}
\overline {D}_\mu(T,\alpha),$$ which is called {\it the upper metric entropy
dimension} of $(X,\mathcal{B},\mu,T)$. And we define $$\underline {D}_\mu(X,T)=\sup_{\alpha \in \mathcal{P}_X}
\underline {D}_\mu(T,\alpha),$$ which is called {\it the lower metric entropy
dimension} of $(X,\mathcal{B},\mu,T)$. When $\overline {D}_\mu(X,T)=\underline {D}_\mu(X,T)$, we denote the quantity by ${D}_\mu(X,T)$ and call it {\it the metric entropy
dimension} of $(X,\mathcal{B},\mu,T)$.
\end{definition}

By Proposition \ref{pro-dp-1}, we have
\begin{theorem} \label{null-zero} Let $(X,\mathcal{B},\mu,T)$ be a null MDS. Then
$D_\mu(X,T)=0$.
\end{theorem}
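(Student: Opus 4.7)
The plan is to unwind the definitions and chain together Propositions \ref{pro-dp-1} and \ref{prop-kkk}; there is essentially no obstacle here because ``null'' is exactly the hypothesis that makes the positive entropy sequence set empty.

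First I would observe that by definition a MDS $(X,\mathcal{B},\mu,T)$ is null precisely when, for every finite measurable partition $\alpha\in\mathcal{P}_X$ and every increasing sequence $S\subset\N$, the sequence entropy $h^S_\mu(T,\alpha)$ vanishes. Recalling the defining condition of a positive entropy sequence (that $h^S_\mu(T,\alpha)>0$), this immediately forces $\mathcal{P}_\mu(T,\alpha)=\emptyset$ for every $\alpha\in\mathcal{P}_X$.

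Next I would feed this emptiness into Proposition \ref{pro-dp-1}, which says that $\overline{D}^p_\mu(T,\alpha)=0$ exactly when $\mathcal{P}_\mu(T,\alpha)=\emptyset$. So for every $\alpha\in\mathcal{P}_X$ we get $\overline{D}^p_\mu(T,\alpha)=0$. Now the chain of inequalities provided by Proposition \ref{prop-kkk},
\[
\underline{D}^e_\mu(T,\alpha)\le \overline{D}^e_\mu(T,\alpha)=\underline{D}^p_\mu(T,\alpha)\le \overline{D}^p_\mu(T,\alpha),
\]
forces all four quantities to vanish simultaneously. In particular $\overline{D}_\mu(T,\alpha)=\overline{D}^e_\mu(T,\alpha)=0$ and $\underline{D}_\mu(T,\alpha)=\underline{D}^e_\mu(T,\alpha)=0$, so $D_\mu(T,\alpha)$ exists and equals $0$ for every $\alpha$.

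Finally, taking the supremum over $\alpha\in\mathcal{P}_X$ as prescribed by Definition \ref{de-s-4}, I would conclude $\overline{D}_\mu(X,T)=\underline{D}_\mu(X,T)=0$, which is the desired conclusion $D_\mu(X,T)=0$. The main ``work'' was done in the preceding two propositions, and the proof here amounts to invoking them in sequence; no new estimate is required.
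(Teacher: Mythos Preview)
Your proposal is correct and follows exactly the route the paper intends: the paper states this theorem immediately after writing ``By Proposition \ref{pro-dp-1} and \ref{prop-kkk}, we have'' and gives no further argument, so your unwinding of those two propositions is precisely the proof the authors have in mind.
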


In the following, we study the basic properties of entropy dimension
of measure-preserving system. But since the upper dimension and the lower dimension do not agree in general,
we discuss the properties of the upper dimension. We note that they hold for the entropy dimension.
\begin{proposition}\label{prop-basic}
Let $(X,\mathcal{B},\mu, T)$ and $(Y,\mathcal{D},\nu, S)$ be two
MDS's and $\alpha,\beta\in \mathcal{P}_X, \eta\in \mathcal{P}_Y$.
Then
\begin{enumerate}
\item If $\alpha \preceq \beta$, then $\overline{D}_\mu(T,\alpha)\leq \overline{D}_\mu(T,
\beta)$, where by $\alpha \preceq \beta$ we mean that every atom of
$\beta$ is contained in one of the atoms of $\alpha$.

\item For any $0\le m \le n$, $\overline{D}_\mu(T,\alpha)=\overline{D}_\mu(T,\bigvee \limits_{i=m}^n T^{-i}\alpha)$.

\item  $\overline{D}_\mu(T,\alpha\vee \beta)=\max \{
\overline{D}_\mu(T,\alpha),\overline{D}_\mu(T,\beta)\}$.

\item $\overline{D}_\mu(X,T)=\sup \{ \overline{D}_\mu(T,\alpha):\alpha\in
\mathcal{P}_X^2\}$, where $\mathcal{P}_X^2$ denotes the set of
all partitions by two measurable sets of $X$.

\item $\overline{D}_{\mu\times \nu}(T\times S, \alpha\times \eta)
=\max \{ \overline{D}_\mu(T,\alpha), \overline{D}_\nu(S,\eta) \}$.
\end{enumerate}
Statements (1) and (2) also hold for lower dimensions.

\end{proposition}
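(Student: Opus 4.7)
The plan is to combine monotonicity of entropy under refinement of partitions with the equivalence $\overline{D}_\mu(T,\alpha)=\overline{D}^e_\mu(T,\alpha)=\underline{D}^p_\mu(T,\alpha)$ from Proposition \ref{prop-kkk}, switching between the entropy generating and positive entropy sequence characterizations as convenient for each part.

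Items (1), (2), and (4) I would settle directly. For (1), $\alpha \preceq \beta$ gives $\bigvee_{i=1}^{n} T^{-s_i}\alpha \preceq \bigvee_{i=1}^{n} T^{-s_i}\beta$ for every sequence $\{s_i\}$, so $\mathcal{E}_\mu(T,\alpha)\subseteq \mathcal{E}_\mu(T,\beta)$ and taking suprema of $\overline{D}(S)$ respects the inclusion. For (4), any finite partition is a finite join of two-set partitions, so once (3) is known an easy induction reduces $\overline{D}_\mu(X,T)$ to a supremum over $\mathcal{P}_X^2$. For (2), one direction is (1). For the reverse, given $S=\{s_1<s_2<\cdots\}\in \mathcal{E}_\mu(T,\bigvee_{i=m}^{n}T^{-i}\alpha)$, I would form $\tilde S=\bigcup_j\{s_j+m,\ldots,s_j+n\}$, so that $\bigvee_{j=1}^k T^{-s_j}\bigl(\bigvee_{i=m}^n T^{-i}\alpha\bigr)=\bigvee_{i\in \tilde S_k}T^{-i}\alpha$, where $\tilde S_k:=\bigcup_{j\le k}\{s_j+m,\ldots,s_j+n\}$ has cardinality in $[k,\,k(n-m+1)]$. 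A short counting argument comparing $\tilde s_N$ with $s_N$ (from the inclusion $S\subseteq \tilde S$) and with $s_{\lceil N/(n-m+1)\rceil}$ (from the density bound on $\tilde S$) gives $\overline{D}(\tilde S)=\overline{D}(S)$, while the entropy comparison shows $\tilde S\in \mathcal{E}_\mu(T,\alpha)$.

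Items (3) and (5) I would handle by switching to the $\underline{D}^p$ characterization. For (3), the direction $\max\{\overline{D}_\mu(T,\alpha),\overline{D}_\mu(T,\beta)\}\le \overline{D}_\mu(T,\alpha\vee \beta)$ is (1). For the reverse, I would choose $S\in \mathcal{P}_\mu(T,\alpha\vee \beta)$ with $\underline{D}(S)$ close to $\overline{D}_\mu(T,\alpha\vee \beta)=\underline{D}^p_\mu(T,\alpha\vee \beta)$ and use subadditivity $H_\mu(\bigvee T^{-s_i}(\alpha\vee \beta))\le H_\mu(\bigvee T^{-s_i}\alpha)+H_\mu(\bigvee T^{-s_i}\beta)$ along a subsequence realizing the positive limsup to conclude $S\in \mathcal{P}_\mu(T,\alpha)\cup \mathcal{P}_\mu(T,\beta)$, which bounds $\underline{D}(S)$ by the maximum of the two $\underline{D}^p$ values. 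For (5), the product identity $\bigvee_{i=1}^n(T\times S)^{-s_i}(\alpha\times \eta)=\bigl(\bigvee_{i=1}^n T^{-s_i}\alpha\bigr)\times \bigl(\bigvee_{i=1}^n S^{-s_i}\eta\bigr)$ together with $H_{\mu\times \nu}=H_\mu+H_\nu$ on product partitions reduces the question to the elementary fact that $\limsup_n\frac{a_n+b_n}{n}>0$ iff one of $\limsup_n\frac{a_n}{n},\limsup_n\frac{b_n}{n}$ is positive; this yields the equality $\mathcal{P}_{\mu\times \nu}(T\times S,\alpha\times \eta)=\mathcal{P}_\mu(T,\alpha)\cup \mathcal{P}_\nu(S,\eta)$, and taking $\underline{D}$-suprema produces the required max.

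I expect the main technical point to be the construction in (2): the auxiliary $\tilde S$ must simultaneously be entropy generating for $\alpha$ and carry exactly the same dimension as $S$, which works because $\overline{D}$ and $\underline{D}$ are insensitive to multiplicative inflation by the bounded factor $n-m+1$. The extensions of (1) and (2) to the lower dimension are then immediate: the inclusion $\mathcal{E}_\mu(T,\alpha)\subseteq \mathcal{E}_\mu(T,\beta)$ also yields the monotonicity needed for (1) with $\underline{D}$-suprema, and the same $\tilde S$ construction preserves $\underline{D}(S)$ by the same counting argument.
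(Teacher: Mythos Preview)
Your treatment of (1), (3), (4), and (5) is correct and matches the paper; the paper derives (5) from (3) by writing $\alpha\times\eta=(\alpha\times\{Y\})\vee(\{X\}\times\eta)$, which is equivalent to your direct computation via $\mathcal{P}_{\mu\times\nu}(T\times S,\alpha\times\eta)=\mathcal{P}_\mu(T,\alpha)\cup\mathcal{P}_\nu(S,\eta)$.

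For (2) there is a small slip and an unnecessary detour. The slip: ``one direction is (1)'' only works when $m=0$, since for $m\ge 1$ one does \emph{not} have $\alpha\preceq\bigvee_{i=m}^n T^{-i}\alpha$. The fix is immediate---use $T^{-m}\alpha\preceq\bigvee_{i=m}^n T^{-i}\alpha$ together with $H_\mu(\bigvee_j T^{-s_j}T^{-m}\alpha)=H_\mu(\bigvee_j T^{-s_j}\alpha)$ by measure-preservation. The detour: your $\tilde S$ construction, while correct, is not needed. The paper's ``obvious'' argument is the two-sided estimate
\[
H_\mu\Bigl(\bigvee_{j=1}^k T^{-s_j}\alpha\Bigr)\;\le\; H_\mu\Bigl(\bigvee_{j=1}^k T^{-s_j}\bigvee_{i=m}^n T^{-i}\alpha\Bigr)\;\le\;(n-m+1)\,H_\mu\Bigl(\bigvee_{j=1}^k T^{-s_j}\alpha\Bigr),
\]
the left inequality coming from $T^{-m}\alpha\preceq\bigvee_{i=m}^n T^{-i}\alpha$ plus $T$-invariance of $\mu$, the right from subadditivity of $H_\mu$ over the join in $i$. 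Dividing by $k$ shows $\mathcal{E}_\mu(T,\alpha)=\mathcal{E}_\mu\bigl(T,\bigvee_{i=m}^n T^{-i}\alpha\bigr)$ (and likewise $\mathcal{P}_\mu$), so all four dimension quantities coincide for the two partitions with no need to manufacture a new sequence or compare its dimension to that of $S$.
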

\begin{proof} By the definition, (1) and (2) are obvious. Also (5) follows from (3).
For (3), firstly
we have $\overline{D}_\mu(T,\alpha\vee \beta)\ge \max \{
\overline{D}_\mu(T,\alpha),\overline{D}_\mu(T,\beta)\}$ by (1). Secondly, if
$\overline{D}_\mu(T,\alpha\vee \beta)=0$, then it is clear that
$\overline{D}_\mu(T,\alpha\vee \beta)=\max \{
\overline{D}_\mu(T,\alpha),\overline{D}_\mu(T,\beta)\}$. Now we assume that
$0<\overline{D}_\mu(T,\alpha\vee \beta)$. For any $\tau\in
(0,\overline{D}_\mu(T,\alpha\vee \beta))$. There exists $S=\{
s_1<s_2<\cdots\}\in \mathcal{P}_\mu(T,\alpha\vee \beta)$ with
$\underline{D}(S)>\tau$.

Since $S\in \mathcal{P}_\mu(T,\alpha\vee \beta)$, $\limsup
\limits_{n\rightarrow +\infty}\frac{1}{n}H_\mu(\bigvee
\limits_{i=1}^nT^{-s_i}(\alpha\vee \beta))>0$. This implies
$$\limsup \limits_{n\rightarrow +\infty}\frac{1}{n}H_\mu(\bigvee
\limits_{i=1}^nT^{-s_i}\alpha)>0 \text{ or }\limsup_{n\rightarrow
+\infty}\frac{1}{n}H_\mu(\bigvee_{i=1}^nT^{-s_i}\beta)>0,$$ that is,
$S\in \mathcal{P}_\mu(T,\alpha)$ or $S\in \mathcal{P}_\mu(T,\beta)$.
Hence $\tau\le \underline{D}(S)\le \max \{
\overline{D}_\mu(T,\alpha),\overline{D}_\mu(T,\beta)\} $. As $\tau$ is arbitary, we get
$\overline{D}_\mu(T,\alpha\vee \beta)=\max \{
\overline{D}_\mu(T,\alpha),\overline{D}_\mu(T,\beta)\}$.

Now we are to show (4). Clearly, $\overline{D}_\mu(X,T)\ge \sup \{
\overline{D}_\mu(T,\alpha):\alpha\in \mathcal{P}_X^2\}$. Conversely, for any
$\alpha=\{ A_1,\cdots,A_k\}\in \mathcal{P}_X$, let $\alpha_i=\{
A_i,A_i^c\}$ for $i=1,2,\cdots,k$. Then $\bigvee \limits_{i=1}^k
\alpha_i\succeq \alpha$. Hence by (1) and (3), we have
$$\overline{D}_\mu(T,\alpha)\le \max \{ \overline{D}_\mu(T,\alpha_i):1\le i\le k\}\le
\sup \{ \overline{D}_\mu(T,\alpha):\alpha\in \mathcal{P}_X^2\}.$$ Finally,
since $\alpha$ is arbitrary, we get (4).
\end{proof}
For two partitions $\alpha=\{ A_1,A_2,\cdots, A_k\}$ and  $\beta=\{ B_1,B_2,\cdots,B_k\}\in \mathcal{P}_X$,
denote by
$\mu(\beta\Delta \alpha):=\sum_{i=1}^k \mu(B_i\Delta A_i)$.

\begin{lemma} \label{lem-esti} Let $(X,\mathcal{B},\mu,T)$ be a MDS and
 $\alpha=\{ A_1,A_2,\cdots, A_k\}\in \mathcal{P}_X$.
Then for any $\epsilon>0$, there exists $\delta>0$ such that for any
$\beta=\{ B_1,B_2,\cdots,B_k\}\in \mathcal{P}_X$ with
$\mu(\beta\Delta \alpha)<\delta$,
it holds that
\begin{enumerate}
\item $\overline{D}_\mu(T,\beta)>\overline{D}_\mu(T,\alpha)-\epsilon$,
\item $\underline{D}_\mu(T,\beta)>\underline{D}_\mu(T,\alpha)-\epsilon$
and
\item
${D}_\mu(T,\beta)>{D}_\mu(T,\alpha)-\epsilon$ when the dimensions exist.
\end{enumerate}
\end{lemma}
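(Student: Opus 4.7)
The plan is to reduce the lemma to the standard continuity estimate for Shannon entropy under small $L^1$ perturbations of a partition, applied uniformly in the length of the iterated partition. Let $k$ be the number of atoms of $\alpha$, and recall that there is a function $\psi_k:[0,\infty)\to[0,\infty)$ with $\psi_k(\delta)\to 0$ as $\delta\to 0^+$ such that any two partitions $\alpha,\beta\in\mathcal{P}_X$ with $k$ atoms satisfying $\mu(\alpha\Delta\beta)<\delta$ obey $H_\mu(\alpha|\beta)\le\psi_k(\delta)$.

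Fix such a $\beta$ and any increasing sequence $S=\{s_1<s_2<\cdots\}\subset\mathbb{N}$. By subadditivity of conditional entropy together with the $T$-invariance of $\mu$,
\[
H_\mu\!\left(\bigvee_{i=1}^n T^{-s_i}\alpha\;\Big|\;\bigvee_{i=1}^n T^{-s_i}\beta\right)\le\sum_{i=1}^n H_\mu(T^{-s_i}\alpha|T^{-s_i}\beta)=n\,H_\mu(\alpha|\beta)\le n\psi_k(\delta),
\]
so $\tfrac{1}{n}H_\mu(\bigvee_{i=1}^n T^{-s_i}\beta)\ge \tfrac{1}{n}H_\mu(\bigvee_{i=1}^n T^{-s_i}\alpha)-\psi_k(\delta)$. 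Consequently, whenever $S\in\mathcal{E}_\mu(T,\alpha)$ with $c:=\liminf_n\tfrac{1}{n}H_\mu(\bigvee_{i=1}^n T^{-s_i}\alpha)>0$ and $\delta$ is chosen so small that $\psi_k(\delta)<c$, the same sequence $S$ lies in $\mathcal{E}_\mu(T,\beta)$.

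To deduce (1), assume $\mathcal{E}_\mu(T,\alpha)\neq\emptyset$ (otherwise $\overline{D}_\mu(T,\alpha)=0$ and the claim is trivial). Using $\overline{D}_\mu(T,\alpha)=\sup\{\overline{D}(S):S\in\mathcal{E}_\mu(T,\alpha)\}$, select one $S\in\mathcal{E}_\mu(T,\alpha)$ with $\overline{D}(S)>\overline{D}_\mu(T,\alpha)-\epsilon$; let $c_S>0$ be its associated $\liminf$ and pick $\delta$ with $\psi_k(\delta)<c_S$. For any $\beta$ with $k$ atoms and $\mu(\alpha\Delta\beta)<\delta$, the previous paragraph yields $S\in\mathcal{E}_\mu(T,\beta)$, and thus $\overline{D}_\mu(T,\beta)\ge\overline{D}(S)>\overline{D}_\mu(T,\alpha)-\epsilon$. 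Statement (2) is obtained by the same scheme with $\underline{D}(S)>\underline{D}_\mu(T,\alpha)-\epsilon$ in place of the upper dimension; and (3) is immediate from (2), because when both dimensions exist one has $D_\mu(T,\beta)=\underline{D}_\mu(T,\beta)>\underline{D}_\mu(T,\alpha)-\epsilon=D_\mu(T,\alpha)-\epsilon$.

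The only delicate point is that the chosen $\delta$ must control the loss in $\tfrac{1}{n}H_\mu$ uniformly in $n$ (and uniformly along the specific sequence $S$). This is exactly what measure-preservation buys us: each per-coordinate defect $H_\mu(T^{-s_i}\alpha|T^{-s_i}\beta)$ reduces to the constant $H_\mu(\alpha|\beta)$, which is why the total conditional entropy grows only linearly in $n$ and the prefactor $1/n$ absorbs it cleanly regardless of how sparse $S$ is. Beyond that observation, the proof is a routine bookkeeping exercise with the continuity of Shannon entropy.
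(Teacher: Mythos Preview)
Your proof is correct and follows essentially the same approach as the paper: pick a near-optimal $S\in\mathcal{E}_\mu(T,\alpha)$, use Walters' continuity lemma to make $H_\mu(\alpha|\beta)$ small, and then bound $H_\mu(\bigvee_i T^{-s_i}\alpha\mid\bigvee_i T^{-s_i}\beta)\le nH_\mu(\alpha|\beta)$ via subadditivity and $T$-invariance to conclude $S\in\mathcal{E}_\mu(T,\beta)$. The paper only spells out the case of the upper dimension and leaves (2) and (3) implicit, whereas you address all three; otherwise the arguments are the same.
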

\begin{proof} We only prove for upper dimension. If $\overline{D}_\mu(T,\alpha)=0$, it is obvious. Now assume
that $\overline{D}_\mu(T,\alpha)>0$. For any $\epsilon>0$, there exists $S=\{
s_1<s_2<s_3<\cdots\} \in \mathcal{\mathcal{E}}_\mu(T,\alpha)$ with
$\overline{D}(S)>\overline{D}_\mu(T,\alpha)-\epsilon$. There exists $\delta>0$ such that if
$\beta=\{ B_1,B_2,\cdots,B_k\}\in \mathcal{P}_X$ and
$\mu(\beta\Delta \alpha)<\delta$
then
$$H_\mu(\alpha|\beta)+H_\mu(\beta|\alpha)<\frac{1}{2}\liminf_{n\rightarrow\infty}\frac{1}{n}H_\mu(\bigvee_{i=1}^nT^{-s_i}\alpha )$$
(see Lemma 4.15 in \cite{Wal}).

For any $\beta=\{ B_1,B_2,\cdots,B_k\}\in \mathcal{P}_X$ and
$\mu(\beta\Delta \alpha)<\delta$,
\begin{align*}
&\liminf_{n\rightarrow +\infty}
\frac{1}{n}H_\mu(\bigvee_{i=1}^n T^{-s_i}\beta) \\
&\ge \liminf_{n\rightarrow +\infty} \frac{1}{n}\left(
H_\mu(\bigvee_{i=1}^n T^{-s_i}(\alpha\vee \beta))
-H_\mu(\bigvee_{i=1}^n T^{-s_i}\beta|\bigvee_{i=1}^n T^{-s_i}\alpha)\right)\\
&\ge \liminf_{n\rightarrow +\infty} \frac{1}{n}\left(
H_\mu(\bigvee_{i=1}^n T^{-s_i}\alpha )-nH_\mu(\beta|\alpha) \right
)\\
&\ge \frac{1}{2}\liminf_{n\rightarrow\infty}\frac{1}{n}H_\mu(\bigvee_{i=1}^nT^{-s_i}\alpha )>0,
\end{align*}
that is, $S\in \mathcal{E}_\mu(T,\beta)$. Hence $\overline{D}_\mu(T,\beta)\ge
\overline{D}(S)>\overline{D}_\mu(T,\alpha)-\epsilon$.
\end{proof}

\begin{theorem}\label{thm-mart} Let $(X,\mathcal{B},\mu,T)$ be a MDS.
\begin{enumerate}
\item If $\{ \alpha_i\}_{i\in \mathbb{N}}\subset \mathcal{P}_X$ and $\alpha\in \mathcal{P}_X$ satisfying
 $\alpha\preceq \bigvee \limits_{i\in \mathbb{N}}\alpha_i$, then
$\overline{D}_\mu(T,\alpha)\le \sup \limits_{i\ge 1} \overline{D}_\mu(T,\alpha_i)$.

\item If $\{ \alpha_i\}_{i\in \mathbb{N}}\subset \mathcal{P}_X$ and
$\alpha_i\nearrow \mathcal{B}\ (\text{\rm mod}\, \mu)$, then $\overline{D}_\mu(X,T)=\lim
\limits_{i\rightarrow +\infty} \overline{D}_\mu(T,\alpha_i)$. Moreover, if
$\alpha$ is a generating partition, i.e. $\bigvee
\limits_{i=0}^\infty T^{-i}\alpha=\mathcal B\ (\text{\rm mod}\, \mu)$, then
$\overline{D}_\mu(X,T)=\overline{D}_\mu(T,\alpha)$.

\item If $\{ \alpha_i\}_{i\in \mathbb{N}}\subset \mathcal{P}_X$ and
$\bigvee\limits_{i\in\N}\alpha_i=\mathcal B\ (\text{\rm mod}\, \mu)$, then
$\overline{D}_\mu(X,T)=\sup\limits_{i\ge 1}\overline{D}_\mu(T,\alpha_i)$.
\end{enumerate}
\end{theorem}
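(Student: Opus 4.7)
My plan is to prove (1) first, then derive (2) from it, and finally specialize to the generating-partition case. For (1), suppose $\overline{D}_\mu(T,\alpha) > 0$ (otherwise it is trivial) and pick $\tau \in (0,\overline{D}_\mu(T,\alpha))$. Using the identity $\overline{D}_\mu(T,\alpha) = \overline{D}^e_\mu(T,\alpha)$ from Definition~\ref{de-s-3}, I extract an entropy-generating sequence $S = \{s_1<s_2<\cdots\}$ for $\alpha$ with $\overline D(S) > \tau$, and set $a = \liminf_n \frac{1}{n}H_\mu(\bigvee_{j=1}^n T^{-s_j}\alpha) > 0$. Since $\alpha\preceq\bigvee_{i\in\N}\alpha_i$, the martingale convergence theorem for conditional entropies yields an $N$ with $H_\mu(\alpha \mid \bigvee_{i=1}^N \alpha_i) < a/2$. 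Writing $\beta_N = \bigvee_{i=1}^N\alpha_i$ and using subadditivity of conditional entropy under joining the same number of $T$-translates together with $T$-invariance of $\mu$,
\[
H_\mu\Bigl(\bigvee_{j=1}^n T^{-s_j}\alpha \,\Big|\, \bigvee_{j=1}^n T^{-s_j}\beta_N\Bigr) \le \sum_{j=1}^n H_\mu(\alpha\mid\beta_N) < \frac{na}{2},
\]
so $\liminf_n \frac{1}{n}H_\mu(\bigvee_{j=1}^n T^{-s_j}\beta_N) \ge a/2 > 0$, that is, $S\in\mathcal E_\mu(T,\beta_N)$. Hence $\overline D_\mu(T,\beta_N) \ge \overline D(S) > \tau$, while iterating Proposition~\ref{prop-basic}(3) gives $\overline D_\mu(T,\beta_N) = \max_{1\le i\le N}\overline D_\mu(T,\alpha_i) \le \sup_i \overline D_\mu(T,\alpha_i)$. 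Letting $\tau \nearrow \overline D_\mu(T,\alpha)$ concludes (1).

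For (2), Proposition~\ref{prop-basic}(1) makes the sequence $\overline D_\mu(T,\alpha_i)$ monotone nondecreasing in $i$, so $\lim_i\overline D_\mu(T,\alpha_i) = \sup_i\overline D_\mu(T,\alpha_i) \le \overline D_\mu(X,T)$ is immediate. Conversely, for any $\gamma\in\mathcal P_X$ the hypothesis $\alpha_i\nearrow\mathcal B(\bmod\,\mu)$ forces $\gamma\preceq\bigvee_{i\in\N}\alpha_i$, and applying (1) gives $\overline D_\mu(T,\gamma) \le \sup_i\overline D_\mu(T,\alpha_i)$; taking the sup over $\gamma$ delivers the reverse inequality and hence the equality $\overline D_\mu(X,T) = \lim_i \overline D_\mu(T,\alpha_i)$. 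For the generating-partition statement, I put $\alpha_i := \bigvee_{j=0}^i T^{-j}\alpha$; by the generating hypothesis, $\alpha_i \nearrow \mathcal B(\bmod\,\mu)$, while Proposition~\ref{prop-basic}(2) gives $\overline D_\mu(T,\alpha_i) = \overline D_\mu(T,\alpha)$ for every $i$, so the first part of (2) collapses to $\overline D_\mu(X,T) = \overline D_\mu(T,\alpha)$.

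The main obstacle is the conditional-entropy step in (1): a single finite approximation $\beta_N$ of $\alpha$ must preserve the entropy-generating property of the sequence $S$ uniformly in $n$. The crucial input is the subadditivity estimate $H_\mu(\bigvee_j T^{-s_j}\alpha \mid \bigvee_j T^{-s_j}\beta_N) \le n\, H_\mu(\alpha\mid\beta_N)$, which converts the pointwise smallness $H_\mu(\alpha\mid\beta_N) < a/2$ into an $n$-linear loss and thus lets the $\liminf$ lower bound survive the passage from $\alpha$ to $\beta_N$. After this, everything reduces to bookkeeping with the properties collected in Proposition~\ref{prop-basic} and the identity $\overline{D}_\mu = \overline{D}^e_\mu$.
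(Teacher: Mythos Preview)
Your proof is correct and follows essentially the same strategy as the paper: approximate $\alpha$ by a finite join $\beta_N=\bigvee_{i=1}^N\alpha_i$, use the subadditivity bound $H_\mu(\bigvee_j T^{-s_j}\alpha\mid \bigvee_j T^{-s_j}\beta_N)\le n\,H_\mu(\alpha\mid\beta_N)$ to transfer an entropy-generating sequence from $\alpha$ to $\beta_N$, and then invoke Proposition~\ref{prop-basic}(3) to pass from $\beta_N$ to the individual $\alpha_i$. The only technical difference is in how the approximation is obtained: the paper first packages the perturbation step into Lemma~\ref{lem-esti} (stability of $\overline{D}_\mu(T,\cdot)$ under small symmetric-difference perturbations, via Walters' Lemma~4.15) and then chooses a partition $\gamma\preceq\beta_N$ with $\mu(\gamma\Delta\alpha)$ small, whereas you go directly through the martingale convergence $H_\mu(\alpha\mid\beta_N)\to 0$. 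Your route is marginally cleaner in that it avoids constructing the auxiliary $\gamma$ with the same number of atoms as $\alpha$; the paper's route has the advantage that Lemma~\ref{lem-esti} is reusable elsewhere. The derivation of (2) from (1), and the generating-partition case via Proposition~\ref{prop-basic}(2), match the paper exactly.
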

\begin{proof} It is obvious that (1) implies (2) and (3). Now we are to show (1).
Let $\alpha=\{ A_1,A_2,\cdots,$ $A_k\}\in \mathcal{P}_X$ and fix
$\epsilon>0$. By Lemma \ref{lem-esti}  there exists $\delta>0$ such
that if $\beta=\{ B_1,B_2,\cdots,B_k\}\in \mathcal{P}_X$ and
$\mu(\beta\Delta \alpha):=\sum \limits_{i=1}^k \mu(B_i\Delta
A_i)<\delta$, then $\overline{D}_\mu(T,\beta)>\overline{D}_\mu(T,\alpha)-\epsilon$. Since
$\alpha\preceq \bigvee \limits_{i\in \mathbb{N}}\alpha_i$, there
exist $N\in \mathbb{N}$ and $\gamma=\{ C_1,C_2,\cdots,C_k\}\preceq
\bigvee \limits_{i=1}^N \alpha_i$ such that $\mu(\gamma\Delta
\alpha)<\delta$. Thus $\overline{D}_\mu(T,\gamma)\ge \overline{D}_\mu(T,\alpha)-\epsilon$
and so
\begin{align*}
\sup_{i\ge 1} \overline{D}_\mu(T,\alpha_i)&\ge \max \{
\overline{D}_\mu(T,\alpha_i)|i=1,2,\cdots,N\}=\overline{D}_\mu(T,\bigvee_{i=1}^N
\alpha_i)\\ &\ge \overline{D}_\mu(T,\gamma)\ge \overline{D}_\mu(T,\alpha)-\epsilon.
\end{align*}
Since the above inequality is true for any
$\epsilon>0$, we get $\sup\limits_{i\ge 1} \overline{D}_\mu(T,\alpha_i)\ge
\overline{D}_\mu(T,\alpha)$.
\end{proof}

\begin{proposition}\label{pro-D-ite} Let $(X,\mathcal{B},\mu,T)$ be a MDS and $\alpha\in \mathcal{P}_X$.
\begin{enumerate}
\item For $k\in \mathbb{N}$, we have
$\overline{D}_\mu(T^k,\alpha)=\overline{D}_\mu(T,\alpha)$. Moreover,
$\overline{D}_\mu(X,T^k)=\overline{D}_\mu(X,T)$ and this is also true for lower dimensions and dimensions whenever the dimensions exist.

\item When $T$ is invertible, we have $\overline{D}_\mu(T,\alpha)=\overline{D}_\mu(T^{-1},\alpha)$ and hence $\overline{D}_\mu(X,T)=\overline{D}_\mu(X,T^{-1})$.
\end{enumerate}
\end{proposition}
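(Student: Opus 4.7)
For part (1), I would prove both inequalities via the characterization $\overline{D}_\mu(T,\alpha)=\overline{D}^e_\mu(T,\alpha)$. The direction $\overline{D}_\mu(T,\alpha)\ge\overline{D}_\mu(T^k,\alpha)$ uses the bijection $S\mapsto kS$: if $S=\{s_1<s_2<\cdots\}\in\mathcal{E}_\mu(T^k,\alpha)$, then $kS=\{ks_1<ks_2<\cdots\}\in\mathcal{E}_\mu(T,\alpha)$ because $\bigvee_{i=1}^n T^{-ks_i}\alpha=\bigvee_{i=1}^n(T^k)^{-s_i}\alpha$, and the scaling $\overline{D}(kS,\tau)=k^{-\tau}\overline{D}(S,\tau)$ yields $\overline{D}(kS)=\overline{D}(S)$ (and the analog for $\underline{D}$). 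For the reverse, fix $\tau<\overline{D}_\mu(T,\alpha)$ and $S\in\mathcal{E}_\mu(T,\alpha)$ with $\overline{D}(S)>\tau$; Euclidean-divide $s_i=kq_i+r_i$ with $0\le r_i<k$, set $\beta=\bigvee_{r=0}^{k-1}T^{-r}\alpha$, and exploit the refinement $T^{-s_i}\alpha\preceq(T^k)^{-q_i}\beta$. Since the map $s_i\mapsto q_i$ is nondecreasing and at most $k$-to-one, the enumeration $Q=\{q_1^*<q_2^*<\cdots\}$ of the distinct values of $(q_i)$ satisfies $q_{m_n}^*=\lfloor s_n/k\rfloor$ with $n/k\le m_n\le n$; comparing entropies shows $Q\in\mathcal{E}_\mu(T^k,\beta)$, while $\overline{D}(Q,\tau)\ge k^{\tau-1}\overline{D}(S,\tau)=+\infty$ forces $\overline{D}(Q)\ge\tau$.

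The reduction from $\beta$ back to $\alpha$ uses the measure-preservation identity $H_\mu(\bigvee_i T^{-ks_i-r}\alpha)=H_\mu(T^{-r}\bigvee_i T^{-ks_i}\alpha)=H_\mu(\bigvee_i(T^k)^{-s_i}\alpha)$, so $\mathcal{E}_\mu(T^k,T^{-r}\alpha)=\mathcal{E}_\mu(T^k,\alpha)$ for each $r$, and Proposition 2.8(3) then gives $\overline{D}_\mu(T^k,\beta)=\max_{0\le r<k}\overline{D}_\mu(T^k,T^{-r}\alpha)=\overline{D}_\mu(T^k,\alpha)$. The lower-dim and dim versions follow by analogous reasoning with $\liminf$ replacing $\limsup$, supplemented by a residue-class pigeonhole (fix the residue $r^*$ carrying at least a $1/k$ fraction of the total entropy and form $Q$ from $\{s\in S:s\equiv r^*\!\!\pmod k\}$ alone) so as not to rely on a max formula for $\underline{D}_\mu$. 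The system-level identity $\overline{D}_\mu(X,T^k)=\overline{D}_\mu(X,T)$ follows by taking sup over $\alpha$.

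For part (2), the symmetry $(T^{-1})^{-1}=T$ reduces the problem to $\overline{D}_\mu(T^{-1},\alpha)\ge\overline{D}_\mu(T,\alpha)$. Using the identity $\overline{D}_\mu=\underline{D}^p_\mu$ of Theorem 2.6, fix $\tau<\underline{D}^p_\mu(T,\alpha)$, take $S\in\mathcal{P}_\mu(T,\alpha)$ with $\underline{D}(S)>\tau$, and pick a subsequence $n_k\uparrow\infty$ with $\tfrac{1}{n_k}H_\mu(\bigvee_{i=1}^{n_k}T^{-s_i}\alpha)\ge c>0$. Invertibility of $T$ yields $H_\mu(T^m\mathcal{A})=H_\mu(\mathcal{A})$ for all $m\in\Z$; applying $T^{s_{n_k}+1}$ produces the reversal $H_\mu(\bigvee_{i=1}^{n_k}T^{-s_i}\alpha)=H_\mu(\bigvee_{i=1}^{n_k}(T^{-1})^{-(s_{n_k}+1-s_i)}\alpha)$. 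Form reversed blocks $B_k=\{M_k+s_{n_k}+1-s_i:1\le i\le n_k\}\subset\N$ with offsets $M_k$ that make the $B_k$ pairwise disjoint, monotonically placed, and satisfy $M_k\ll s_{n_k}$; set $\tilde S=\bigl(\bigcup_k B_k\bigr)\cup\{\lfloor m^{1/\tau}\rfloor:m\in\N\}$. The filler forces $\underline{D}(\tilde S)\ge\tau$, and choosing $n_k$ to grow so rapidly that $\sum_{j<k}n_j=o(n_k)$ (with $s_{n_k}^\tau=o(n_k)$ automatic from $\underline{D}(S)>\tau$) ensures that the count of $\tilde S$ below $\max B_k$ equals $(1+o(1))n_k$, so the entropy ratio stays $\ge c/2$ along these cutoffs and $\tilde S\in\mathcal{P}_\mu(T^{-1},\alpha)$. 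Hence $\overline{D}_\mu(T^{-1},\alpha)\ge\tau$, and $\overline{D}_\mu(X,T)=\overline{D}_\mu(X,T^{-1})$ follows by sup over $\alpha$. The principal technical hurdle is the simultaneous balancing in (2): the filler must be dense enough to deliver $\underline{D}(\tilde S)\ge\tau$ yet sparse enough to preserve the positive entropy ratio, which demands careful growth control of both $n_k$ and the offsets $M_k$.
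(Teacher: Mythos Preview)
Your overall strategy is sound, but there is one genuine gap and one place where you take a longer road than necessary.

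\textbf{Part (1), upper dimension.} Your argument via $\beta=\bigvee_{r=0}^{k-1}T^{-r}\alpha$ followed by the reduction $\overline{D}_\mu(T^k,\beta)=\max_r\overline{D}_\mu(T^k,T^{-r}\alpha)=\overline{D}_\mu(T^k,\alpha)$ is correct. The paper, however, never passes through $\beta$: setting $S_1=\{\lfloor s_i/k\rfloor:i\in\mathbb N\}=\{t_1<t_2<\cdots\}$, it uses subadditivity and $T$-invariance of $\mu$ to get
\[
k\,H_\mu\Bigl(\bigvee_{j=1}^{n}(T^k)^{-t_j}\alpha\Bigr)\;\ge\;H_\mu\Bigl(\bigvee_{i=0}^{k-1}\bigvee_{j=1}^{n}T^{-(kt_j+i)}\alpha\Bigr)\;\ge\;H_\mu\Bigl(\bigvee_{j=1}^{n}T^{-s_j}\alpha\Bigr),
\]
the last inequality because $\{s_1,\dots,s_n\}\subset\{kt_j+i:1\le j\le n,\ 0\le i<k\}$. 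This shows $S_1\in\mathcal E_\mu(T^k,\alpha)$ \emph{directly}, with no appeal to Proposition~2.8(3), and the same chain handles $\liminf$ as well as $\limsup$.

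\textbf{Part (1), lower dimension: a gap.} Your residue-class pigeonhole does not work as stated. For each $n$ there is \emph{some} residue $r(n)$ whose partial join carries a $1/k$ share of the entropy, but $r(n)$ may vary with $n$; nothing forces a single $r^*$ to achieve this for all (or even a density-one set of) $n$, so you only obtain $S^{r^*}\in\mathcal P_\mu(T,\alpha)$ along a subsequence, not $S^{r^*}\in\mathcal E_\mu(T,\alpha)$. Worse, restricting $S$ to a fixed residue class can strictly lower the \emph{lower} dimension of the sequence (a subset of $S$ need not inherit $\underline D(S)$). The clean fix is exactly the paper's subadditivity inequality above: it yields $S_1\in\mathcal E_\mu(T^k,\alpha)$ with $\underline D(S_1)=\underline D(S)$ immediately, bypassing the pigeonhole entirely.

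\textbf{Part (2).} Here you take a genuinely different route. You work with a positive entropy sequence $S$ having $\underline D(S)>\tau$, reverse finite blocks, and insert a filler $\{\lfloor m^{1/\tau}\rfloor\}$ to force $\underline D(\tilde S)\ge\tau$ while controlling growth so that the $\limsup$ entropy ratio survives; this lands $\tilde S$ in $\mathcal P_\mu(T^{-1},\alpha)$. The paper instead invokes \emph{Fact~A} from the proof of Proposition~2.4: for every $\tau<\overline D_\mu(T,\alpha)$ there is a sequence $S$ with $\overline D(S)>\tau$ and a uniform bound $H_\mu(\bigvee_{i=m_1}^{m_2}T^{-s_i}\alpha)\ge(m_2-m_1+1)d$ for all $m_1\le m_2$. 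With this uniform estimate in hand the reversed-block sequence $F$ (built from a rapidly growing subsequence $n_j$ with $n_j\ge s_{n_j}^\tau$) satisfies $H_\mu(\bigvee_{m=1}^n T^{f_m}\alpha)\ge\tfrac{a}{4}n$ for every $n$, so $F\in\mathcal E_\mu(T^{-1},\alpha)$ outright, with no filler and no delicate balancing of $M_k$ against $s_{n_k}$. Your approach is legitimate but requires the careful growth bookkeeping you flag; the paper's use of Fact~A trades that bookkeeping for the work already done inside Proposition~2.4.
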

\begin{proof} (1). We only prove for the upper dimension. Given $k\in \mathbb{N}$. Let $S\in \mathcal{E}_\mu(T^k,\alpha)$. Then
$kS=\{ ks:s\in S\}\in \mathcal{E}_\mu(T,\alpha)$. Since
$\overline{D}(kS)=\overline{D}(S)$, $\overline{D}_\mu(T,\alpha)\ge
\underline{D}(S)$. Finally since $S$ is arbitrary in $\mathcal{E}_\mu(T,\alpha)$,
$\overline{D}_\mu(T,\alpha)\ge \overline{D}_\mu(T^k,\alpha)$.

Conversely, let $S=\{ s_1<s_2<\cdots\}\in
\mathcal{E}_\mu(T,\alpha)$. Without loss of generality, we assume
$s_1\ge k$. Set $S_1=\{ \lfloor\frac{s_i}{k}\rfloor:i\in \mathbb{N}
\}$. For simplicity of the notation, we write $S_1=\{ t_1<t_2<\cdots\}$. Then $\lfloor
\frac{s_j}{k}\rfloor \le t_j\le \lfloor \frac{s_{j}+k}{k}\rfloor$ for
all $j\in \mathbb{N}$.

Now
\begin{align*}
\liminf_{n\rightarrow +\infty} \frac{H_\mu(\bigvee \limits_{j=1}^n
T^{-kt_j}\alpha)}{n}&\ge \liminf_{n \rightarrow +\infty}
\frac{H_\mu(\bigvee \limits_{i=0}^{k-1}\bigvee \limits_{j=1}^{n}
T^{-(kt_j+i)}\alpha)}{kn}\ge \liminf_{n \rightarrow +\infty}
\frac{H_\mu(\bigvee \limits_{j=1}^{n} T^{-s_j}\alpha)}{kn}>0,
\end{align*}
as $S\in \mathcal{E}_\mu(T,\alpha)$. This implies that $S_1\in
\mathcal{E}_\mu (T^k,\alpha)$. Since
$\overline{D}(S)=\overline{D}(S_1)$, $\overline{D}_\mu(T^k,\alpha)\ge
\overline{D}(S_1)=\overline{D}(S)$. Finally since $S$ is
arbitrary, $\overline{D}_\mu(T^k,\alpha)\ge \overline{D}_\mu(T,\alpha)$.

\medskip
(2). Let $T$ be invertible. By symmetry of $T$ and $T^{-1}$, it is
sufficient to show $\overline{D}_\mu(T^{-1},\alpha)\ge \overline{D}_\mu(T,\alpha)$. If
$\overline{D}_\mu(T,\alpha)=0$, this is obvious. Now we assume that
$\overline{D}_\mu(T,\alpha)>0$. Given $\tau\in (0,\overline{D}_\mu(T,\alpha))$.

By Fact A in the proof of Proposition \ref{prop-kkk}, we know that
there exists a sequence $S=\{ s_1<s_2<\cdots\}\subset\N$ and $a>0$
such that $\overline{D}(S)>\tau$ and for any $1\le m_1\le m_2$,
\begin{align}\label{eq-m1m2}
H_\mu(\bigvee_{i=m_1}^{m_2}T^{-s_i}\alpha)\ge (m_2+1-m_1)a.
\end{align}
Since $\overline{D}(S)>\tau$, there exists a sequence $\{
n_1<n_2<\cdots\}$ such that $n_1\ge 2$, $n_{i+1}\ge
1+2\sum_{j=1}^in_i$ and $n_i\ge s_{n_i}^\tau$ for all $i\in
\mathbb{N}$. Let $n_0=0$, $s_0=0$, $f_0=0$ and
$f_m=s_{n_{j}}-s_{n_{j}-m}$ if $n_{j-1}< m\le n_{j}$ for some $j\in
\mathbb{N}$. Put $F=\{ f_m:m\in \mathbb{N}\}$.

Set $n_{-1}=0$. Given $n\in \mathbb{N}$ with $n\ge n_1+1$, there
exists $j\ge 2$ such that $n_{j-1}< n\le n_{j}$. Now we have
\begin{align*}
H_\mu(\bigvee_{m=1}^n T^{f_m}\alpha)&\ge \max \{
H_\mu(\bigvee_{m=n_{j-2}+1}^{n_{j-1}} T^{f_m}\alpha ),
H_\mu(\bigvee_{m=n_{j-1}+1}^n T^{f_m}\alpha )\}\\
&=\max \{
H_\mu(\bigvee_{m=n_{j-2}+1}^{n_{j-1}}T^{s_{n_{j-1}}-s_{n_{j-1}-m}}\alpha),
H_\mu(\bigvee_{m=n_{j-1}+1}^{n}T^{s_{n_j}-s_{n_j-m}}\alpha )\}\\
&=\max \{
H_\mu(\bigvee_{m=n_{j-2}+1}^{n_{j-1}}T^{-s_{n_{j-1}-m}}\alpha),
H_\mu(\bigvee_{m=n_{j-1}+1}^{n}T^{-s_{n_j-m}}\alpha )\}\\
&\ge \max\{
a(n_{j-1}-n_{j-2}-1),a(n-n_{j-1})\}  \ \ \ \ \  \text{(by \eqref{eq-m1m2})} \\
&\ge \max\{ \frac{a}{2}n_{j-1}, \frac{a}{2}(n-n_{j-1})\}\\
&\ge\frac{a}{4} (n_{j-1}+n-n_{j-1})\\
&=\frac{a}{4}n,
\end{align*}
that is,
\begin{align}\label{dfx-eq-1}
H_\mu(\bigvee_{m=1}^n T^{f_m}\alpha)\ge \frac{a}{4}{n}.
\end{align}
Since \eqref{dfx-eq-1} is true for any $n\ge n_1+1$, $F\in
\mathcal{E}_\mu(T^{-1},\alpha)$. Now note that $f_{n_j}=s_{n_j}$ for
$j\in \mathbb{N}$, one has
\begin{align*}
\overline{D}(F,\tau)&=\limsup_{m\rightarrow +\infty}
\frac{m}{f_m^\tau}\ge \limsup_{j\rightarrow +\infty} \frac{n_j
}{s_{n_j}^\tau}\ge 1.
\end{align*}
Hence $\overline{D}(F)\ge \tau$. Moreover $\overline{D}_\mu(T^{-1},\alpha)\ge
\overline{D}(F)\ge \tau$ as $F\in \mathcal{E}_\mu(T^{-1},\alpha)$.
Finally since $\tau$ is arbitrary in $(0,\overline{D}_\mu(T,\alpha))$, we have $\overline{D}_\mu(T^{-1},\alpha)\ge
\overline{D}_\mu(T,\alpha)$.
\end{proof}

\section{factors and joinings}

In this section, we will introduce several notions like dimension
sets, dimension $\sigma$-algebras and uniform dimension systems to
understand the structure of entropy zero systems.

When the metric entropy dimension of a MDS exists, the entropy dimensions of its factors still may not exists. One of the easy examples is a product
system, one with entropy dimension but the other system with no entropy dimension. So in this section we are to consider the upper entropy dimension.

\begin{definition}\label{definition-dset}
We define the {\it dimension set} of a MDS $(X,\mathcal{B},\mu,T)$
by
\begin{align*}
  Dims_\mu(X,T)&=\{\overline{D}_\mu(T,\{ A,X\setminus A\}): A\in \mathcal{B} \text{ and } 0<\mu(A)<1\}\\
  &=\{\overline{D}_\nu(Y,S): (Y,\mathcal{D},\nu,S) \text{ is a factor of } (X,\mathcal{B},\mu,T)\}.
\end{align*}
\end{definition}

\begin{remark}
It is clear that $Dims_\mu(X,T)=\emptyset$ if and only if
$(X,\mathcal{B},\mu, T)$ is a trivial system, i.e., $\mathcal{B}=\{
\emptyset,X\} (\bmod \mu)$. We use the convention $\sup\{\tau\in
Dims_\mu(X,T)\}=0$ when $Dims_\mu(X,T)=\emptyset$. Thus
$\overline{D}_\mu(X,T)=\sup\{\tau\in Dims_\mu(X,T)\}$.
\end{remark}

Let $(X,\mathcal{B},\mu,T)$ be a MDS. Let $$P_\mu(T)=\{ A\in \mathcal{B}: \text{ the measure-theoretic entropy of } \{ A,X\setminus A\} \text{ is zero} \}.$$
It is a $T$-invariant sub $\sigma$-algebra of $\mathcal{B}$ and is known as the {\it Pinsker $\sigma$-algebra} of
$(X,\mathcal{B},\mu,T)$. Moreover, for $k\ge 1$, $P_{\mu}(T)=P_{\mu}(T^k)$. If in addition $T$ is invertible,
then $P_\mu(T)=P_\mu(T^{-1})$.
Recall that the MDS $(X,\mathcal{B},\mu,T)$ is said to have {\it completely positive entropy} (c.p.e., for short) if $P_{\mu}(T)=\{X,\emptyset\}$.
By the well-known Rohlin-Sinai's Theorem (\cite{RS}), a MDS is a K-system if and only if it has c.p.e.. For more details, one may see Parry or Glasner's books (\cite{Parry,Glasner})
for references.

For $\tau\in [0,1)$, we
define
$$P^\tau_\mu(T):=\{ A\in \mathcal{B}: \overline{D}_\mu(T,\{ A,X\setminus A\})\le \tau \}.$$
It is clear that $P^{\tau_1}_\mu(T)\subseteq
P^{\tau_2}_\mu(T)\subseteq P_\mu(T)$ for any $0\le \tau_1\le
\tau_2<1$. The following theorem lists some basic properties of $P^\tau_\mu(T)$.

\begin{theorem}
 \label{h-thm-pinsker-1} Let $(X,\mathcal{B},\mu,T)$ be a MDS and $\tau\in [0,1)$. Then
\begin{enumerate}
\item $P^\tau_{\mu}(T)$ is a sub $\sigma$-algebra of
$\mathcal{B}$.
\item $T^{-1}P^\tau_{\mu}(T)=P^\tau_\mu(T)(mod \, \mu)$.
\item For $k\ge 1$, $P^\tau_{\mu}(T)=P^\tau _{\mu}(T^k)$. If $T$ is invertible,
then $P^\tau_\mu(T)=P^\tau_\mu(T^{-1})$.
\end{enumerate}
\end{theorem}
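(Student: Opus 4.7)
The plan is to transport each claim from partitions $\alpha$ to the $\sigma$-algebra $P^\tau_\mu(T)$, using the invariance properties of $\overline{D}_\mu$ already established. The main inputs are Proposition~\ref{prop-basic} (behavior of $\overline{D}_\mu$ under joins and refinements), Theorem~\ref{thm-mart} (the approximation bound $\overline{D}_\mu(T,\alpha)\le \sup_i \overline{D}_\mu(T,\alpha_i)$ when $\alpha\preceq \bigvee_i\alpha_i$), and Proposition~\ref{pro-D-ite} (invariance of $\overline{D}_\mu$ under $T\mapsto T^k$ and, when applicable, $T\mapsto T^{-1}$).

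For (1), $\emptyset, X \in P^\tau_\mu(T)$ trivially and the class is closed under complements. For a finite union $A_1\cup\cdots\cup A_n$ with each $A_i\in P^\tau_\mu(T)$, setting $\alpha_i=\{A_i, X\setminus A_i\}$ one has $\{\bigcup_i A_i,\, X\setminus \bigcup_i A_i\}\preceq \bigvee_{i=1}^n \alpha_i$, so Proposition~\ref{prop-basic}(1) and (3) give
$\overline{D}_\mu(T,\{\bigcup_i A_i, X\setminus \bigcup_i A_i\}) \le \max_i \overline{D}_\mu(T,\alpha_i)\le \tau$. For a countable union $A=\bigcup_n A_n$, put $B_n=A_1\cup\cdots\cup A_n\in P^\tau_\mu(T)$; since $\{A,X\setminus A\}$ is measurable with respect to the $\sigma$-algebra generated by $\{\{B_n,X\setminus B_n\}\}_n$, Theorem~\ref{thm-mart}(1) yields $\overline{D}_\mu(T,\{A,X\setminus A\})\le \sup_n \overline{D}_\mu(T,\{B_n,X\setminus B_n\})\le \tau$, so $A\in P^\tau_\mu(T)$.

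For (2), the inclusion $T^{-1}P^\tau_\mu(T)\subseteq P^\tau_\mu(T)$ follows from the elementary identity $H_\mu(\bigvee_{i=1}^n T^{-s_i}T^{-1}\alpha)=H_\mu(\bigvee_{i=1}^n T^{-(s_i+1)}\alpha)$, combined with the fact that shifting a sequence by a constant preserves both upper and lower dimensions; applied with $\alpha=\{A,X\setminus A\}$ this gives $\overline{D}_\mu(T,\{T^{-1}A, X\setminus T^{-1}A\})=\overline{D}_\mu(T,\{A, X\setminus A\})\le\tau$. For the reverse inclusion modulo $\mu$-null sets, in the invertible case the same identity applied to $\{TA, X\setminus TA\}$ shows $TA\in P^\tau_\mu(T)$ whenever $A\in P^\tau_\mu(T)$, and the equality $A=T^{-1}(TA)$ exhibits $A$ as an element of $T^{-1}P^\tau_\mu(T)$. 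Finally, (3) is essentially a restatement of Proposition~\ref{pro-D-ite}: since $\overline{D}_\mu(T^k,\alpha)=\overline{D}_\mu(T,\alpha)$, and in the invertible case $\overline{D}_\mu(T^{-1},\alpha)=\overline{D}_\mu(T,\alpha)$, the defining inequality for $P^\tau_\mu$ is unchanged when $T$ is replaced by $T^k$ or $T^{-1}$. The main technical obstacle is the countable-union step in (1), where the passage from finite to infinite unions relies on the martingale-style approximation packaged in Theorem~\ref{thm-mart}; once that is available, the remaining work is bookkeeping on top of the invariance properties of $\overline{D}_\mu$.
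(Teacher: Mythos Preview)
Your arguments for (1) and (3) are correct and essentially match the paper's: the countable-union step goes through Theorem~\ref{thm-mart}(1), and (3) is an immediate consequence of Proposition~\ref{pro-D-ite}. (The paper applies Theorem~\ref{thm-mart}(1) directly to the family $\{\{A_i,X\setminus A_i\}\}_{i\in\mathbb N}$ rather than first passing through finite unions $B_n$, but this difference is cosmetic.)

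There is a genuine gap in your treatment of (2). The theorem is stated for a general MDS with no invertibility hypothesis (note that (3) singles out invertibility as an extra assumption), yet your reverse inclusion $P^\tau_\mu(T)\subseteq T^{-1}P^\tau_\mu(T)$ is argued only ``in the invertible case'' via $A=T^{-1}(TA)$. For non-invertible $T$ there is in general no set $TA$, and you have given no mechanism to produce $B\in P^\tau_\mu(T)$ with $A=T^{-1}B\ (\bmod\,\mu)$. The paper fills this gap with an idea you do not mention: since $\tau<1$, one has $P^\tau_\mu(T)\subseteq P_\mu(T)$, the ordinary Pinsker $\sigma$-algebra, and the standard fact $T^{-1}P_\mu(T)=P_\mu(T)\ (\bmod\,\mu)$ provides, for any $A\in P^\tau_\mu(T)$, some $B\in\mathcal B$ with $A=T^{-1}B\ (\bmod\,\mu)$. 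Then
\[
\overline{D}_\mu(T,\{B,X\setminus B\})=\overline{D}_\mu(T,\{T^{-1}B,X\setminus T^{-1}B\})=\overline{D}_\mu(T,\{A,X\setminus A\})\le\tau,
\]
so $B\in P^\tau_\mu(T)$ and hence $A\in T^{-1}P^\tau_\mu(T)$. Without this appeal to the Pinsker algebra (or some substitute), your proof of (2) is incomplete in the non-invertible setting.
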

\begin{proof} (1). Clearly, $\emptyset, X\in P_\mu^\tau (T)$.
Let $A,B\in P^\tau_\mu(T)$. Since $X\setminus (X\setminus A)=A$,
$X\setminus A\in P^\tau_\mu(T)$.

Let $A_i\in P^\tau_\mu(T)$, $i\in \mathbb{N}$. Now we are to show
that $\cup_{i=1}^\infty A_i\in P^\tau_\mu(T)$, i.e., $$\overline{D}_\mu(T,\{
\cup_{i=1}^\infty A_i, X\setminus \cup_{i=1}^\infty A_i \})\le
\tau.$$ Since $\{ \cup_{i=1}^\infty A_i, X\setminus
\cup_{i=1}^\infty A_i \}\subseteq \bigvee_{i=1}^\infty \{
A_i,X\setminus A_i\}$, using Theorem \ref{thm-mart} (1) we get
$$\overline{D}_\mu(T,\{ \cup_{i=1}^\infty A_i, X\setminus \cup_{i=1}^\infty A_i \})\le \sup_{i\in
\mathbb{N}} \overline{D}_\mu(T, \{ A_i,X\setminus A_i\})\le \tau.$$ Hence
$\cup_{i=1}^\infty A_i \in P^\tau_\mu(T)$. This shows
$P^\tau_{\mu}(T)$ is a sub $\sigma$-algebra of $\mathcal{B}$.

\medskip
(2). Since for $A\in \mathcal{B}$, $$\overline{D}_\mu(T,\{ A,X\setminus
A\})=\overline{D}_\mu(T,T^{-1}\{ A,X\setminus A\})= \overline{D}_\mu(T,\{
T^{-1}A,X\setminus T^{-1}(A)\}),$$ we have
$T^{-1}P^\tau_\mu(T)\subseteq P^\tau_\mu(T)$.

Conversely, let $A\in P^\tau_\mu(T)$. Then $A\in
P^\tau_\mu(T)\subseteq P_\mu(T)=T^{-1}P_\mu(T)$. Hence there
exists $B\in \mathcal{B}$ such that $A=T^{-1}B$. Now note that
$$\overline{D}_\mu(T,\{ B,X\setminus B\})=\overline{D}_\mu(T,\{ T^{-1}B, T^{-1}(X\setminus B)\})=\overline{D}_\mu(T,\{
A,X\setminus A\})\le \tau,$$ we have $B\in P^\tau_\mu(T)$, i.e.,
$A\in T^{-1}P^\tau_\mu(T)$. Therefore
$P^\tau_\mu(T)=T^{-1}P^\tau_\mu(T)$.

(3). For $\alpha\in \mathcal{P}_X$ and $k\in \mathbb{N}$, we have
$\overline{D}_\mu(T^k,\alpha)=\overline{D}_\mu(T,\alpha)$(see Proposition
\ref{pro-D-ite}). Hence $\alpha\subset P^\tau_\mu(T)$ if and only if
$\alpha\subset P^\tau_\mu(T^k)$. This implies
$P^\tau_\mu(T)=P^\tau_\mu(T^k)$. Finally,
$P^\tau_\mu(T)=P^\tau_\mu(T^{-1})$ by Proposition \ref{pro-D-ite}.
\end{proof}

We call $P^\tau_\mu(T)$ the {\it $\tau^-$-dimension sub
$\sigma$-algebra} of $(X,\mathcal{B},\mu,T)$, and if
$P^\tau_\mu(T)=\mathcal{B}$, we call $(X,\mathcal{B},\mu,T)$ a {\it
$\tau^-$-dimension system}.

The following theorem states that the entropy dimension set must be
right closed.
\begin{theorem}
Let $(X,\mathcal{B},\mu,T)$ be an invertible ergodic MDS. If $r_i\in
Dims_\mu(X,T)$, $i\in \mathbb{N}$ and $r\in [0,1]$ such that
$r_i\nearrow r$, then $r\in Dims_\mu(X,T)$.
\end{theorem}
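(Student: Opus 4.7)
The plan is to exhibit a single set $A\in\mathcal{B}$ with $0<\mu(A)<1$ and $\overline{D}_\mu(T,\{A,X\setminus A\})=r$, by gathering all of the $A_i$'s into one $T$-invariant countably generated sub-$\sigma$-algebra and then picking a two-set generator of it.

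For each $i\in\mathbb{N}$ I would fix $A_i\in\mathcal{B}$ with $0<\mu(A_i)<1$ and $\overline{D}_\mu(T,\{A_i,X\setminus A_i\})=r_i$, and set
$$\mathcal{F}=\sigma\bigl(T^{-n}A_i:n\in\mathbb{Z},\,i\in\mathbb{N}\bigr).$$
This is a countably generated $T$-invariant sub-$\sigma$-algebra of $\mathcal{B}$, and the restricted system $(X,\mathcal{F},\mu,T)$ is still invertible and ergodic. If $r_i=1$ for some $i$ then already $r=r_i\in Dims_\mu(X,T)$, so one may assume $r_i<1$ throughout. Taking $S=\mathbb{N}$ in the definition of entropy generating sequence shows that $h_\mu(T,\{A_i,X\setminus A_i\})>0$ would force $\overline{D}_\mu(T,\{A_i,X\setminus A_i\})\ge\overline{D}(\mathbb{N})=1$; hence each $\{A_i,X\setminus A_i\}$ has Kolmogorov--Sinai entropy zero, and by subadditivity together with the Kolmogorov--Sinai generator theorem one gets $h_\mu(T|_{\mathcal{F}})=0<\log 2$.

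The main step, and the principal obstacle, is to produce a single set $A\in\mathcal{F}$ with $0<\mu(A)<1$ such that every $A_i\in\sigma_T(A):=\sigma(T^{-n}A:n\in\mathbb{Z})$, equivalently
$$\{A_i,X\setminus A_i\}\;\preceq\;\bigvee_{n\in\mathbb{Z}}T^{-n}\{A,X\setminus A\}\quad\text{for every }i.$$
Viewed as an ergodic invertible zero-entropy MDS on a standard Lebesgue space, $(X,\mathcal{F},\mu,T)$ admits such a two-set generator by Krieger's generator theorem. Equivalently, aperiodicity of $T|_{\mathcal{F}}$ affords an explicit Rokhlin-tower construction: one encodes longer and longer finite prefixes of the countable generating family $\{T^{-n}A_i\}_{n,i}$ into the binary string $(1_A(T^n x))_{n\in\mathbb{Z}}$ by patching across towers of rapidly increasing heights.

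Once such an $A$ is fixed, both dimension inequalities reduce to the previously established toolkit. For the lower bound, fix $i$ and apply Theorem~\ref{thm-mart}(1) with $\alpha=\{A_i,X\setminus A_i\}$ refined by the countable family $\{T^{-n}\{A,X\setminus A\}\}_{n\in\mathbb{Z}}$:
$$r_i=\overline{D}_\mu(T,\{A_i,X\setminus A_i\})\;\le\;\sup_{n\in\mathbb{Z}}\overline{D}_\mu\bigl(T,T^{-n}\{A,X\setminus A\}\bigr)=\overline{D}_\mu(T,\{A,X\setminus A\}),$$
the last equality using Proposition~\ref{prop-basic}(2) for $n\ge 0$ together with Proposition~\ref{pro-D-ite}(2) to pass to $n<0$. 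Letting $i\to\infty$ yields $\overline{D}_\mu(T,\{A,X\setminus A\})\ge r$. For the upper bound, $A\in\mathcal{F}$ gives $\{A,X\setminus A\}\preceq\bigvee_{n\in\mathbb{Z},\,i\in\mathbb{N}}T^{-n}\{A_i,X\setminus A_i\}$, and the same tools yield $\overline{D}_\mu(T,\{A,X\setminus A\})\le\sup_i r_i=r$. Combining shows $\overline{D}_\mu(T,\{A,X\setminus A\})=r$, whence $r\in Dims_\mu(X,T)$.
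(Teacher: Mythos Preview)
Your proposal is correct and follows essentially the same route as the paper: build the $T$-invariant sub-$\sigma$-algebra $\mathcal{F}$ generated by all the $A_i$'s, observe that each $\{A_i,X\setminus A_i\}$ has zero Kolmogorov--Sinai entropy (since $r_i<1$) so that $h_\mu(T|_{\mathcal{F}})=0$, invoke Krieger's generator theorem to obtain a two-set generator $\{A,X\setminus A\}$ of $\mathcal{F}$, and then use Theorem~\ref{thm-mart}(1) together with Proposition~\ref{prop-basic}(2) for both inequalities. Your write-up is in fact slightly more explicit than the paper's in two places---you spell out why positive entropy would force dimension $1$, and you handle the negative iterates $T^{n}\{A,X\setminus A\}$ via Proposition~\ref{pro-D-ite}(2)---but the argument is the same.
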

\begin{proof} For $i\in \mathbb{N}$, let $P_i=\{A_i,X\setminus A_i\}$ for
some $A_i\in \mathcal{B}$ with $0<\mu(A_i)<1$ such that $r_i=\overline{D}_\mu
(T,P_i)\in Dims_\mu(X,T)$. We denote by $\mathcal {B} _i=\bigvee
\limits_{n=-\infty}^\infty T^{-n}P_i$, the $\sigma-$algebra
generated by $P_i$. Let ${\mathcal D}=\bigvee
\limits_{i=1}^\infty\mathcal {B}_i$. For each $i\in \mathbb{N}$,
$h_\mu(T,P_i)=0$ since $r_i<1$. Thus $h_{\mu}(T,\mathcal D)=0$,
moreover by Krieger's generator theorem\cite{K}, we have a partition
$P=\{A,X\setminus A\}$ such that $\mathcal D=\bigvee_{n=-\infty}^\infty
T^{-n}P$. Then by Theorem \ref{thm-mart} (1) and Proposition 2.9 (2) we have $$r_i\le\sup \limits_{n\ge 1} \overline{D}_\mu(T,\bigvee_{k=-n}^n
T^{-k}P)=\overline{D}_{\mu}(T,P),$$ for all $i=1,2,\cdots$. Since
$P \preceq \bigvee \limits_{n\in \mathbb{N}}\bigvee \limits_{j=-n}^n\bigvee \limits_{i=1}^nT^{-j}P_i$,
by Theorem \ref{thm-mart} (1) again, $$\overline{D}_{\mu}(T,P)\le \sup \limits_{n\ge 1} \overline{D}_\mu(T,\bigvee \limits_{j=-n}^n\bigvee \limits_{i=1}^nT^{-j}P_i)=r.$$
Hence $\overline{D}_\mu(T,P)=r$ which shows that $r\in Dims_\mu(X,T)$.
\end{proof}

In the following we will give a disjointness theorem via entropy
dimension. Let's recall the related notions first.

Let $(X,\mathcal{B},\mu,T)$ and $(Y,\mathcal{D},\nu,S)$ be two
MDS's. A probability measure $\lambda$ on $(X\times
Y,\mathcal{B}\times \mathcal{D})$ is a {\it joining} of
$(X,\mathcal{B},\mu,T)$ and $(Y,\mathcal{D},\nu,S)$ if it is
$T\times S$-invariant, and has $\mu$ and $\nu$ as marginals; i.e.
$proj_X(\lambda)=\mu$ and $proj_Y(\lambda)=\nu$. We let $J(\mu,\nu)$
be the space of all joinings of $(X,\mathcal{B},\mu,T)$ and
$(Y,\mathcal{D},\nu,S)$. We say $(X,\mathcal{B},\mu,T)$ and
$(Y,\mathcal{D},\nu,S)$ are {\it disjoint} if
$J(\mu,\nu)=\{\mu\times \nu\}$. More generally if $\{
(X_i,\mathcal{B}_i,\mu_i,T_i)\}_{i\in I}$ is a collection of MDS's,
a probability measure $\lambda$ on $(\prod_{i\in I} X_i,\prod_{i\in
I}\mathcal{B}_i)$ is a {\it joining} of
$\{(X_i,\mathcal{B}_i,\mu_i,T_i)\}$ if it is $\prod_{i\in
I}T_i$-invariant, and has $\mu_i$ as marginals; i.e.
$proj_{X_i}(\lambda)=\mu_i$ for every $i\in I$. We let $J(\{\mu_i
\}_{i\in I})$ be the spaces of all these joinings.  When
$(X_i,\mathcal{B}_i,\mu_i,T_i)=(X,\mathcal{B},\mu,T)$ for $i\in I$,
we write $J(\{\mu_i \}_{i\in I})$ as $J(\mu;I)$ and call $\lambda\in
J(\mu;I)$ $I$-fold self-joinings.

\begin{lemma} \label{lem-join-dim}
Let $(X,\mathcal{B}, \mu,T)$ be a MDS. If $\eta\in J(\mu;\mathbb{Z})$, then
$\overline{D}_\eta(X^{\mathbb{Z}},T^{\mathbb{Z}})=\overline{D}_\mu(X,T)$.
\end{lemma}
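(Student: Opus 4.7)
The plan is to prove both inequalities separately. For the lower bound $\overline{D}_\eta(X^{\mathbb{Z}},T^{\mathbb{Z}}) \ge \overline{D}_\mu(X,T)$, observe that for each $k\in\mathbb{Z}$ the coordinate projection $\pi_k:X^{\mathbb{Z}}\to X$ is a factor map from $(X^{\mathbb{Z}},\eta,T^{\mathbb{Z}})$ to $(X,\mu,T)$, since $\eta$ has $\mu$ as its $k$-th marginal and $\pi_k\circ T^{\mathbb{Z}}=T\circ\pi_k$. Thus for any $\alpha\in\mathcal{P}_X$ and any sequence $s_1<s_2<\cdots$,
\[
H_\eta\Bigl(\bigvee_{i=1}^n (T^{\mathbb{Z}})^{-s_i}\pi_k^{-1}\alpha\Bigr)
= H_\mu\Bigl(\bigvee_{i=1}^n T^{-s_i}\alpha\Bigr),
\]
so $\mathcal{E}_\eta(T^{\mathbb{Z}},\pi_k^{-1}\alpha)=\mathcal{E}_\mu(T,\alpha)$ and consequently $\overline{D}_\eta(T^{\mathbb{Z}},\pi_k^{-1}\alpha)=\overline{D}_\mu(T,\alpha)$. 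Taking supremum over $\alpha\in\mathcal{P}_X$ yields the lower bound.

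For the upper bound I would first dispatch partitions of $X^{\mathbb{Z}}$ that are measurable with respect to finitely many coordinates. For any finite $F\subset\mathbb{Z}$ and partitions $\{\beta_k:k\in F\}\subset\mathcal{P}_X$, iterating Proposition \ref{prop-basic}(3) over the $|F|$ summands gives
\[
\overline{D}_\eta\Bigl(T^{\mathbb{Z}},\bigvee_{k\in F}\pi_k^{-1}\beta_k\Bigr)
= \max_{k\in F}\overline{D}_\eta(T^{\mathbb{Z}},\pi_k^{-1}\beta_k)
= \max_{k\in F}\overline{D}_\mu(T,\beta_k) \le \overline{D}_\mu(X,T).
\]
Any finite partition $\xi$ of $X^{\mathbb{Z}}$ measurable with respect to $\sigma(\pi_k:k\in F)$ is refined by such a finite coordinate join (take each $\beta_k$ to be a sufficiently fine partition of $X$), so Proposition \ref{prop-basic}(1) gives $\overline{D}_\eta(T^{\mathbb{Z}},\xi)\le\overline{D}_\mu(X,T)$.

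To pass to arbitrary partitions, pick an increasing sequence of finite partitions $\beta_1\preceq\beta_2\preceq\cdots$ of $X$ with $\beta_n\nearrow\mathcal{B}\pmod\mu$, and set $\xi_N:=\bigvee_{|k|\le N}\pi_k^{-1}\beta_N$. Each $\xi_N$ is a finite partition of $X^{\mathbb{Z}}$, $\xi_1\preceq\xi_2\preceq\cdots$, and $\bigvee_N\xi_N$ generates the product $\sigma$-algebra $\pmod\eta$ (since every cylinder based on finitely many coordinates is eventually approximated). The previous step gives $\overline{D}_\eta(T^{\mathbb{Z}},\xi_N)\le\overline{D}_\mu(X,T)$ for every $N$, and Theorem \ref{thm-mart}(2) yields
\[
\overline{D}_\eta(X^{\mathbb{Z}},T^{\mathbb{Z}})
= \lim_{N\to\infty}\overline{D}_\eta(T^{\mathbb{Z}},\xi_N)
\le \overline{D}_\mu(X,T),
\]
completing the proof. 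The only delicate point is the inductive use of Proposition \ref{prop-basic}(3) over finite coordinate collections; once the finite-coordinate case is in hand, the remainder is a routine assembly of the factor-map invariance of sequence entropies and the convergence theorem of Theorem \ref{thm-mart}.
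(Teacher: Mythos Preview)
Your proof is correct and follows essentially the same strategy as the paper: both generate the product $\sigma$-algebra from a countable family of coordinate pullbacks $\pi_k^{-1}\alpha_i$, identify $\overline{D}_\eta(T^{\mathbb{Z}},\pi_k^{-1}\alpha_i)=\overline{D}_\mu(T,\alpha_i)$ via the factor map, and then invoke Theorem~\ref{thm-mart}. The paper is slightly more direct: it applies Theorem~\ref{thm-mart}(1) once to the whole countable family $\{\pi_j^{-1}\alpha_i\}_{i\in\mathbb{N},\,j\in\mathbb{Z}}$, which immediately yields $\overline{D}_\eta(X^{\mathbb{Z}},T^{\mathbb{Z}})=\sup_{i,j}\overline{D}_\eta(T^{\mathbb{Z}},\pi_j^{-1}\alpha_i)=\sup_i\overline{D}_\mu(T,\alpha_i)=\overline{D}_\mu(X,T)$, without the intermediate finite-join step via Proposition~\ref{prop-basic}(3) and without splitting the two inequalities. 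Your route through Proposition~\ref{prop-basic}(3) followed by Theorem~\ref{thm-mart}(2) reaches the same endpoint but could be shortened to match the paper.

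One small caution: your intermediate assertion that ``any finite partition $\xi$ of $X^{\mathbb{Z}}$ measurable with respect to $\sigma(\pi_k:k\in F)$ is refined by such a finite coordinate join'' is not correct as stated --- an atom of $\xi$ corresponds to a general set in $\mathcal{B}^F$, which need not be a finite union of measurable rectangles. Fortunately the rest of your argument never uses that claim: the partitions $\xi_N=\bigvee_{|k|\le N}\pi_k^{-1}\beta_N$ you actually feed into Theorem~\ref{thm-mart}(2) are already finite coordinate joins by construction. You can simply delete that sentence without loss.
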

\begin{proof}  First there exists $\{\alpha_j\}_{j=1}^\infty \subseteq \mathcal{P}_X$
such that $\alpha_1\preceq \alpha_2\preceq \alpha_3\cdots$ and
$\bigvee_{j=1}^\infty \alpha_j=\mathcal{B} (\mod \mu)$. Then for
$i\in \mathbb{Z}$ let $\pi_i: X^{\mathbb{Z}}\rightarrow X$ be the
$i$-th coordinate projection. Let $\beta_i^j=\pi_i^{-1}(\alpha_j)$
for $i\in \mathbb{Z}$ and $j\in \mathbb{N}$. Then $\beta_i^j\in
\mathcal{P}_{X^{\mathbb{Z}}}$. It is clear that $\bigvee_{i\in
\mathbb{Z},j\in \mathbb{N}} \beta^j_i=\mathcal{B}^{\mathbb{Z}} (\mod
\eta)$ and $\overline{D}_\mu(T,\alpha_j)=\overline{D}_\eta(T^{\mathbb{Z}},\beta_i^j)$ for $i\in
\mathbb{Z},j\in \mathbb{N}$. Hence by Theorem \ref{thm-mart} (1),
$$\overline{D}_\eta(X^{\mathbb{Z}},T^{\mathbb{Z}})=\sup_{i\in
\mathbb{Z},j\in \mathbb{N}}
\overline{D}_\eta(T^{\mathbb{Z}}, \beta_i^j )=\sup_{j\in
\mathbb{N}}\overline{D}_\mu(T,\alpha_j)=\overline{D}_\mu(X,T).$$ This finishes the proof
of the Lemma.
\end{proof}

\begin{theorem}\label{dis}
Let $(X,\mathcal{B},\mu,T)$ be an invertible MDS and
$(Y,\mathcal{D},\nu,S)$ be an ergodic MDS. If
$Dims_\mu(X,T)>\overline{D}_\nu(Y,S)$ (i.e. for any $\tau\in
\text{Dims}_\nu(X,T)$, $\tau>\overline{D}_\nu(Y,S)$), then
$(X,\mathcal{B},\mu,T)$ is disjoint from $(Y,\mathcal{D},\nu,S)$.
\end{theorem}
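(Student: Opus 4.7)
The plan is to mimic Furstenberg's classical disjointness theorem between $K$-systems and zero-entropy systems, with the dimension-Pinsker $\sigma$-algebras $P^{\tau}_{\lambda}(T\times S)$ of Theorem~\ref{h-thm-pinsker-1} playing the role of the classical Pinsker $\sigma$-algebra. Fix $\lambda\in J(\mu,\nu)$; the aim is to show $\lambda=\mu\times\nu$, and I set $\tau:=\overline{D}_\nu(Y,S)$.

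First I would record a ``transparency'' property of joinings: for any $\alpha\in\mathcal{P}_X$, $\eta\in\mathcal{P}_Y$ and any sequence $\{s_i\}\subset\mathbb{N}$,
\[H_{\lambda}\Bigl(\bigvee_{i=1}^n(T\times S)^{-s_i}\pi_X^{-1}\alpha\Bigr)=H_{\mu}\Bigl(\bigvee_{i=1}^n T^{-s_i}\alpha\Bigr),\]
and analogously for $\pi_Y^{-1}\eta$, since these lifted partitions depend only on one coordinate of $X\times Y$ and $\lambda$ has the correct marginals. Hence $\overline{D}_\lambda(T\times S,\pi_X^{-1}\alpha)=\overline{D}_\mu(T,\alpha)$ and $\overline{D}_\lambda(T\times S,\pi_Y^{-1}\eta)=\overline{D}_\nu(S,\eta)$. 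Consequently $\pi_Y^{-1}(\mathcal{D})\subseteq P^{\tau}_{\lambda}(T\times S)$ (every such dimension is $\le\overline{D}_\nu(Y,S)=\tau$), while by hypothesis $\pi_X^{-1}(A)\notin P^{\tau}_{\lambda}(T\times S)$ for every $A\in\mathcal{B}$ with $0<\mu(A)<1$. If $A\in\mathcal{B}$ and $B\in\mathcal{D}$ satisfy $\pi_X^{-1}(A)=\pi_Y^{-1}(B)$ mod $\lambda$, both bounds apply to the common set and force $A$ to be trivial mod $\mu$. Hence the common factor $\pi_X^{-1}(\mathcal{B})\cap\pi_Y^{-1}(\mathcal{D})$ is trivial mod $\lambda$.

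The main obstacle is promoting triviality of the common factor to full disjointness $\lambda=\mu\times\nu$. For this I would work with the disintegration $\lambda=\int_Y(\lambda_y\otimes\delta_y)\,d\nu(y)$, which by $(T\times S)$-invariance satisfies $T_*\lambda_y=\lambda_{Sy}$ for $\nu$-a.e.\ $y$; the aim is $\lambda_y=\mu$ for $\nu$-a.e.\ $y$. Introducing the Markov operator $M:L^2(\mu)\to L^2(\nu)$, $Mf(y):=\int f\,d\lambda_y$, the identity $Mf\circ S=M(f\circ T)$ shows $M$ intertwines the Koopman operators of $T$ and $S$. Disjointness is equivalent to $M$ sending every $f$ to the constant $\int f\,d\mu$. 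If instead $M$ has nontrivial range, standard joining machinery (in the spirit of Furstenberg--Glasner) converts this $S$-invariant Hilbert subspace into a nontrivial $S$-invariant sub-$\sigma$-algebra of $\mathcal{D}$ together with an isomorphic $T$-invariant sub-$\sigma$-algebra of $\mathcal{B}$, i.e.\ a nontrivial common factor---contradicting the previous paragraph.

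The hardest step will be this last one: converting the \emph{spectral} factor produced by the Markov operator into a genuine measure-theoretic common factor to which Definition~\ref{definition-ud} and the dimension hypothesis apply. To handle it I would reduce to the ergodic case via ergodic decomposition of $\lambda$ (using the ergodicity of $\nu$ to guarantee that $\nu$-almost every ergodic component of $\lambda$ still projects to $\nu$ on the $Y$-factor), then invoke Lemma~\ref{lem-join-dim} together with a Krieger-type generator argument (as in the right-closedness proof for $Dims_\mu$) to exhibit an explicit two-set partition $\{A,X\setminus A\}$ of $X$ whose pullback under $\pi_X$ agrees $\lambda$-a.e.\ with the pullback of some $\{B,Y\setminus B\}$ under $\pi_Y$. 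The dimension hypothesis then rules out such $A$, forcing $M$ to be trivial and $\lambda=\mu\times\nu$.
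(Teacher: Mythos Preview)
Your first two paragraphs are fine: the transparency of dimensions under pullbacks by the coordinate projections is correct, and it does follow that the ``common factor'' $\pi_X^{-1}(\mathcal{B})\cap\pi_Y^{-1}(\mathcal{D})$ is trivial mod $\lambda$. The genuine gap is in the third paragraph. Triviality of the common factor does \emph{not} imply $\lambda=\mu\times\nu$ in general, and the Markov-operator step you sketch does not close this gap. The range of $M$ is indeed an $S$-invariant closed subspace of $L^2(\nu)$, but an invariant Hilbert subspace need not come from a sub-$\sigma$-algebra, and even when it does there is no mechanism producing an \emph{isomorphic} $T$-invariant sub-$\sigma$-algebra of $\mathcal{B}$ matched to it under $\lambda$. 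What the adjoint pair $M,M^*$ actually gives you is another joining (via the composition $M^*M$), not a common factor; so the appeal to ``standard machinery'' is circular here. Your proposed fix in the last paragraph---ergodic decomposition of $\lambda$, Lemma~\ref{lem-join-dim}, and a Krieger generator---does not address this: the obstruction persists for ergodic joinings, and Krieger only packages an existing factor into a two-set generator rather than manufacturing the factor you need.

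The paper circumvents this exactly by the Glasner--Thouvenot--Weiss device you have not used. One disintegrates over $X$ (not $Y$), writes $\lambda=\int_X\delta_x\times\lambda_x\,d\mu$, and passes to the infinite self-joining $\nu_\infty=\int_X\lambda_x^{\mathbb{Z}}\,d\mu$ on $Y^{\mathbb{Z}}$ together with $\lambda_\infty$ on $X\times Y^{\mathbb{Z}}$. Lemma~\ref{lem-join-dim} keeps $\overline{D}_{\nu_\infty}(Y^{\mathbb{Z}},S^{\mathbb{Z}})=\overline{D}_\nu(Y,S)$, so your dimension argument again makes the $\lambda_\infty$-common factor trivial. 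The extra structure now available is the \emph{coordinate shift} $\sigma$ on $Y^{\mathbb{Z}}$: each $(Y^{\mathbb{Z}},\lambda_x^{\mathbb{Z}},\sigma)$ is Bernoulli, hence ergodic, so any $\sigma$-invariant function on $X\times Y^{\mathbb{Z}}$ is $\mathcal{B}\times\{\emptyset,Y^{\mathbb{Z}}\}$-measurable. Combined with the triviality of the common factor this forces $(Y^{\mathbb{Z}},\nu_\infty,\sigma)$ to be ergodic; since $\nu_\infty$ is an integral of the $\sigma$-ergodic measures $\lambda_x^{\mathbb{Z}}$, they all coincide, whence $\lambda_x=\nu$ for $\mu$-a.e.\ $x$ and $\lambda=\mu\times\nu$. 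This Bernoulli-shift step is the substantive idea your outline is missing.
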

\begin{proof} We follow the arguments in the proof of Theorem 1 in \cite{GTW}.
Let $\lambda$ be a joining of $(X,\mathcal{B},\mu,T)$ and
$(Y,\mathcal{D},\nu,S)$. Let
$$\lambda=\int_X \delta_x\times \lambda_x d \mu(x)$$
be the disintegration of $\lambda$ over $\mu$ and define probability
measure $\lambda_\infty$ on $X\times Y^{\mathbb{Z}}$ and
$\nu_\infty$ on $Y^{\mathbb{Z}}$ by:
$$\lambda_\infty=\int_X \delta_x \times (\cdots\times\lambda_x\times \lambda_x\cdots ) d \mu(x)$$ and
$$\nu_\infty=\int_X (\cdots\times\lambda_x\times \lambda_x\cdots ) d
\mu(x).$$ Since $\lambda$ is $T\times S$-invariant,
\begin{align*}
\lambda&=(T\times S)\lambda=\int_X \delta_{Tx}\times S\lambda_x d
\mu(x)\\
&=\int_X \delta_{x}\times S\lambda_{T^{-1} x} d \mu(x).
\end{align*}
By uniqueness of disintegration we have
$\lambda_x=S\lambda_{T^{-1}x}$ for $\mu$-a.e. $x\in X$, i.e.,
$S\lambda_x=\lambda_{Tx}$ for $\mu$-a.e. $x\in X$. Moreover
\begin{align*}
S^{\mathbb{Z}} \nu_\infty=\int_X (\cdots\times S\lambda_x\times
S\lambda_x\cdots ) d \mu(x)= \int_X (\cdots\times \lambda_{Tx}\times
\lambda_{Tx}\cdots ) d \mu(x)=\nu_\infty
\end{align*}
This implies $\nu_\infty\in J(\nu,\mathbb{Z})$ since $\int_X
\lambda_x d\mu(x)=\nu$. It is also clear that $\lambda_\infty \in
J(\{ \mu, \nu_\infty\})$, i.e., $\lambda_\infty$ is a joining of
$(X,\mathcal{B},\mu,T)$ and $(Y^{\mathbb{Z}},
\mathcal{D}^{\mathbb{Z}},\nu_\infty,S^{\mathbb{Z}})$.

Let $$\mathcal{E}=\{ E\in \mathcal{B}: \exists F\in
\mathcal{D}^{\mathbb{Z}},\, \lambda_\infty ((E\times
Y^{\mathbb{Z}})\Delta (X\times F))=0\}$$ and
$$\mathcal{F}=\{F\in \mathcal{D}^{\mathbb{Z}}: \exists E\in
\mathcal{B},\, \lambda_\infty ((E\times Y^{\mathbb{Z}})\Delta
(X\times F))=0\}.$$ Then $\mathcal{E}$ is a $T$-invariant
sub-$\sigma$-algebra and $\mathcal{F}$ is a
$S^{\mathbb{Z}}$-invariant sub-$\sigma$-algebra.

Now for any $E\in \mathcal{E}$ there exists $F\in
\mathcal{D}^{\mathbb{Z}}$ such that $\lambda_\infty ((E\times
Y^{\mathbb{Z}})\Delta (X\times F))=0$. Now
\begin{align*}
\overline{D}_\mu(T,\{ E,X\setminus E\})&=\overline{D}_{\lambda_\infty}(T\times
S^{\mathbb{Z}}
, \{ E\times Y^{\mathbb{Z}},(X\setminus E)\times Y^{\mathbb{Z}} \}\\
&=\overline{D}_{\lambda_\infty}(T\times S^{\mathbb{Z}},\{ X\times F,X\times
(Y^{\mathbb{Z}}\setminus F)\}\\
& =\overline{D}_{\nu_\infty} (S^{\mathbb{Z}},\{ F, Y^{\mathbb{Z}}\setminus
F)\}\\& \le \overline{D}_{\nu_\infty}(Y^{\mathbb{Z}},S^{\mathbb{Z}})\\
&=\overline{D}_{\nu}(Y,S)    \ \ \ \ \ (\text{by Lemma \ref{lem-join-dim}}).
\end{align*}
Since $Dims_\mu(X,T)>\overline{D}_\nu(Y,S)$, we have $\mu(E)=0$ or $1$. Hence
$\mathcal{E}=\{ \emptyset, X\} \, (\bmod\, \mu)$ and so
$\mathcal{F}=\{ \emptyset, Y^{\mathbb{Z}}\} \, (\bmod \,
\nu_\infty)$.

Define a transformation $R: X\times Y^{\mathbb{Z}}\rightarrow
X\times Y^{\mathbb{Z}}$ by $R(x,{\bf y})=(x,\sigma { \bf y}) $ where
${\bf y}=\{y_{i}\}_{i\in Z}\in Y^{\mathbb{Z}}$ and $\sigma$ is the
left shift on $Y^{\mathbb{Z}}$. Now if $f(x,y)$ is an $R$-invariant
measurable function on $X\times Y^{\mathbb{Z}}$ then for every $x\in
X$ the function $f_x({\bf y})=f(x,{\bf y} )$ is a $\sigma$-invariant
function on the Bernoulli $\mathbb{Z}$-system$
(Y^{\mathbb{Z}},\lambda_x^{\mathbb{Z}},\sigma)$, hence a constant,
$\lambda_x^{\mathbb{Z}}$-a.e.; i.e., $f(x,{\bf y})=f(x)$,
$\lambda_\infty$-a.e.. Thus every $R$-invariant function on $X\times
Y^{\mathbb{Z}}$ is $\mathcal{B}\times Y^{\mathbb{Z}}$-measurable.

For any $F\in \mathcal{D}^\mathbb{Z}$ with $\nu_\infty
(\sigma^{-1}F\Delta F)=0$, let $f(x,{\bf y})=1_F({\bf y})$ for
$\lambda_\infty$-a.e. $(x,{\bf y})\in X\times Y^{\mathbb{Z}}$. Then $f$ is
$R$-invariant and so $f$ is $\mathcal{B}\times
Y^{\mathbb{Z}}$-measurable. Thus there exists $E\in \mathcal{B}$
such that $f(x,y)=1_E(x)$ for $\lambda_\infty$-a.e. $(x,y)\in
X\times Y^{\mathbb{Z}}$ since $f$ is a characteristic function. This
implies $F\in \mathcal{F}=\{ \emptyset, Y^{\mathbb{Z}}\} \, (\bmod
\nu_\infty )$ so $\nu_\infty(F)=0$ or $1$. Hence
$(Y^{\mathbb{Z}},\mathcal{D}^\mathbb{Z},\nu_\infty,\sigma)$ is
ergodic.

Moreover since
$(Y^{\mathbb{Z}},\mathcal{D}^\mathbb{Z},\lambda_x^{\mathbb{Z}},\sigma)$
is ergodic for $\mu$-a.e. $x\in X$ and $\nu_\infty=\int_X
\lambda_x^{\mathbb{Z}} d\mu(x)$, we have
$\lambda_x^{\mathbb{Z}}=\nu_\infty$ for $\mu$-a.e. $x\in X$.
Considering the projection of zero coordinate, $\lambda_x=\nu$ for
$\mu$-a.e. $x\in X$. Hence $\lambda=\mu\times \nu$. Then it follows
that $(X,\mathcal{B},\mu,T)$ is disjoint from
$(Y,\mathcal{D},\nu,S)$.
\end{proof}

The following result is also obvious.

\begin{theorem} \label{thm-lift-key} Let $\pi:(X,\mathcal{B},\mu,T)\rightarrow (Y,\mathcal{D},\nu,S)$ be a factor map
between two MDS's. Then $Dims_\mu(X,T)\supseteq Dims_\nu(Y,S)$. In
particular, the dimension set is invariant under measurable
isomorphism, and so is the entropy dimension.
\end{theorem}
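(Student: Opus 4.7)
My plan is to show that pulling a two-set partition of $Y$ back through $\pi$ produces a two-set partition of $X$ with exactly the same upper entropy dimension, from which the inclusion $Dims_\mu(X,T) \supseteq Dims_\nu(Y,S)$ is immediate.

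Fix $r \in Dims_\nu(Y,S)$, so there exists $B \in \mathcal{D}$ with $0 < \nu(B) < 1$ and $\overline{D}_\nu(S,\{B,Y\setminus B\}) = r$. Set $A = \pi^{-1}(B)$. Since $\pi$ is measure-preserving we have $\mu(A) = \nu(B) \in (0,1)$, so $A$ is a legitimate candidate for the dimension set of $(X,\mathcal{B},\mu,T)$. The key computation is the comparison of entropy generating and positive entropy sequences for $\{B,Y\setminus B\}$ under $S$ versus $\{A,X\setminus A\}$ under $T$. Using the factor relation $\pi \circ T = S \circ \pi$ (a.e.), we get $T^{-s}\pi^{-1}(B) = \pi^{-1}(S^{-s}B)$ for every $s \in \N$, and hence
\[
\bigvee_{i=1}^n T^{-s_i}\{A, X\setminus A\} \;=\; \pi^{-1}\!\Bigl(\bigvee_{i=1}^n S^{-s_i}\{B, Y\setminus B\}\Bigr).
\]
Because $\pi_*\mu = \nu$, the entropy of a pulled-back partition equals the entropy of the original partition, so
\[
H_\mu\!\Bigl(\bigvee_{i=1}^n T^{-s_i}\{A, X\setminus A\}\Bigr) \;=\; H_\nu\!\Bigl(\bigvee_{i=1}^n S^{-s_i}\{B, Y\setminus B\}\Bigr)
\]
for every finite increasing sequence $\{s_1 < \cdots < s_n\}$. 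Consequently the classes $\mathcal{E}_\mu(T,\{A,X\setminus A\})$ and $\mathcal{E}_\nu(S,\{B,Y\setminus B\})$ coincide as subsets of $\N$, and likewise for $\mathcal{P}$. Taking $\overline{D}$-suprema over these identical families gives $\overline{D}_\mu(T,\{A,X\setminus A\}) = \overline{D}_\nu(S,\{B,Y\setminus B\}) = r$, so $r \in Dims_\mu(X,T)$.

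For the ``in particular'' clause, applying the just-proved inclusion to $\pi$ and to $\pi^{-1}$ when $\pi$ is an isomorphism yields $Dims_\mu(X,T) = Dims_\nu(Y,S)$; invariance of the upper metric entropy dimension then follows from the remark $\overline{D}_\mu(X,T) = \sup\{\tau : \tau \in Dims_\mu(X,T)\}$ recorded just after Definition~\ref{definition-ud}.

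There is no serious obstacle here: the argument is essentially a one-line functoriality observation, once one notices that the conjugation $T^{-s}\pi^{-1} = \pi^{-1}S^{-s}$ turns the dimension data on $Y$ into identical dimension data on $X$. The only thing to be careful about is to use two-set partitions throughout (as required by the definition of $Dims$), which is automatic since the pullback of a two-set partition is a two-set partition.
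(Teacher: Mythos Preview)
Your proof is correct and is exactly the ``obvious'' argument the paper has in mind: the paper does not give a proof at all, merely stating that the result is obvious, and your pullback-of-partitions computation is the natural way to make that observation precise.
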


In the following, we consider some special case for dimension set.

\begin{definition}\label{definition-ud}
Let $\tau\in (0,1]$, we call $(X,\mathcal{B},\mu,T)$ a {\it
$\tau-$uniform entropy dimension system} ($\tau-$u.d. system for
short) if $Dims_\mu(X,T)=\{\tau\}$ and call $(X,\mathcal{B},\mu,T)$
a {\it $\tau^+-$ dimension system} ($\tau^+-$d. system for short) if
$Dims_\mu(X,T)\subset [\tau,1]$. If $0\notin Dims_\mu (X,T)$, we
will say $(X,\mathcal{B},\mu,T)$ has {\it strictly positive entropy
dimension}.
\end{definition}

The motivation that we consider the u.d. systems comes from the
K-mixing systems. We can view the u.d. systems as the analogy of the
K-mixing properties in zero entropy situation.

By Definition \ref{definition-ud} and Theorem \ref{thm-lift-key} we
have
\begin{proposition}\label{lift} Let $\tau \in (0,1]$. Then
\begin{enumerate}
\item
A nontrival factor of a $\tau-$u.d. system is also a $\tau-$u.d.
system.
\item
A nontrival factor of a $\tau^+-$d. system is also a $\tau^+-$d.
system.

\item If a system has strictly positive entropy dimension, then any nontrival factor of this system  also has
strictly positive entropy dimension.
\end{enumerate}
\end{proposition}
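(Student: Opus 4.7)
The plan is to observe that all three parts are immediate consequences of Theorem \ref{thm-lift-key}, which asserts the containment $\text{Dims}_\nu(Y,S) \subseteq \text{Dims}_\mu(X,T)$ whenever $(Y,\mathcal{D},\nu,S)$ is a factor of $(X,\mathcal{B},\mu,T)$. I would fix a factor map $\pi\colon (X,\mathcal{B},\mu,T)\to (Y,\mathcal{D},\nu,S)$ and first note that nontriviality of $Y$ (i.e.\ $\mathcal{D}\neq\{\emptyset,Y\}\,(\bmod\,\nu)$) provides some $A\in\mathcal{D}$ with $0<\nu(A)<1$, so in particular $\text{Dims}_\nu(Y,S)\neq\emptyset$.

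For part (1), assume $X$ is a $\tau$-u.d.\ system, so $\text{Dims}_\mu(X,T)=\{\tau\}$. Then by Theorem \ref{thm-lift-key} the nonempty set $\text{Dims}_\nu(Y,S)$ is contained in $\{\tau\}$, hence equals $\{\tau\}$; this is exactly the $\tau$-u.d.\ property for $Y$. For part (2), assume $X$ is $\tau^+$-d., i.e.\ $\text{Dims}_\mu(X,T)\subseteq [\tau,1]$; then Theorem \ref{thm-lift-key} gives $\text{Dims}_\nu(Y,S)\subseteq \text{Dims}_\mu(X,T)\subseteq [\tau,1]$, proving $Y$ is $\tau^+$-d. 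Finally, for part (3), strictly positive entropy dimension of $X$ means $0\notin \text{Dims}_\mu(X,T)$, and the containment forces $0\notin \text{Dims}_\nu(Y,S)$, so $Y$ has strictly positive entropy dimension.

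No real obstacle arises: once Theorem \ref{thm-lift-key} is in hand the proposition is a definitional unpacking, and all three parts reduce to the same containment of dimension sets together with the fact that a nontrivial factor contributes at least one element to its own dimension set. The only point one might wish to emphasize is why nontriviality of $Y$ is needed, namely to rule out the degenerate situation $\text{Dims}_\nu(Y,S)=\emptyset$, which would trivially satisfy any inclusion but fail to make $Y$ itself a $\tau$-u.d.\ (or $\tau^+$-d., or strictly positive dimension) system in a meaningful sense.
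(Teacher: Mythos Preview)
Your proposal is correct and follows exactly the approach indicated in the paper: the authors simply state that the proposition follows from Definition \ref{definition-ud} and Theorem \ref{thm-lift-key}, and your write-up is precisely the unpacking of that remark. Your explicit observation that nontriviality of the factor guarantees $\text{Dims}_\nu(Y,S)\neq\emptyset$ (needed in part (1) to conclude equality with $\{\tau\}$ rather than mere inclusion) is a helpful detail the paper leaves implicit.
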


\begin{lemma} \label{lem-udwm} Let $(X,\mathcal{B},\mu,T)$ be a MDS. If $(X,\mathcal{B},\mu,T)$ has strictly positive entropy
dimension, then $(X,\mathcal{B},\mu,T)$ is weakly mixing.
\end{lemma}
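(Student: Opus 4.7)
The plan is to prove the contrapositive: assuming $(X,\mathcal{B},\mu,T)$ is not weakly mixing, I will show $0\in Dims_\mu(X,T)$, contradicting the hypothesis of strictly positive entropy dimension.

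First I dispose of the non-ergodic case. If $T$ fails to be ergodic, pick $A\in\mathcal{B}$ with $T^{-1}A=A$ and $0<\mu(A)<1$. Then $\alpha=\{A,X\setminus A\}$ satisfies $T^{-s_i}\alpha=\alpha$ for every $i$, so $\bigvee_{i=1}^{n}T^{-s_i}\alpha=\alpha$ and $H_\mu(\bigvee_{i=1}^{n}T^{-s_i}\alpha)\le\log 2$ for every sequence $S=\{s_1<s_2<\cdots\}$ and every $n$. Hence $\mathcal{P}_\mu(T,\alpha)=\emptyset$, which forces $\overline{D}_\mu(T,\alpha)=0$ and gives $0\in Dims_\mu(X,T)$.

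Now assume $T$ is ergodic but not weakly mixing. Then there is a non-constant eigenfunction $f\in L^2(\mu)$ with $f\circ T=\lambda f$, $|\lambda|=1$; ergodicity forces $|f|$ to be a.e.\ constant, so after normalization $f:X\to S^1$ is a factor map from $(X,\mathcal{B},\mu,T)$ onto a non-trivial compact abelian group rotation $(Y,\mathcal{D},\nu,R)$ (either a finite cyclic group if $\lambda$ is a root of unity, or $S^1$ itself). By Kushnirenko's classical theorem, any MDS with discrete spectrum---in particular any compact group rotation---is null: $h^S_\nu(R,\beta)=0$ for every $\beta\in\mathcal{P}_Y$ and every sequence $S\subseteq\mathbb{N}$. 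Choosing any $B\in\mathcal{D}$ with $0<\nu(B)<1$ then yields $\mathcal{P}_\nu(R,\{B,Y\setminus B\})=\emptyset$, hence $\overline{D}_\nu(R,\{B,Y\setminus B\})=0$ and $0\in Dims_\nu(Y,R)$. Theorem \ref{thm-lift-key} applied to this factor map gives $0\in Dims_\mu(X,T)$, the desired contradiction.

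The main non-routine ingredient is Kushnirenko's theorem identifying compact group rotations as null systems; once that is invoked, the rest is a routine assembly of notions developed in the paper---ergodicity (forced by strictly positive dimension, as the first step shows), construction of a rotation factor from a non-constant eigenfunction, and the lifting of dimension sets through factor maps via Theorem \ref{thm-lift-key}.
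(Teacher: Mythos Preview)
Your proof is correct and takes essentially the same approach as the paper: both argue by contrapositive, exhibiting a non-trivial null factor when the system fails to be weakly mixing, then applying the fact that null systems have entropy dimension zero together with the inclusion of dimension sets under factor maps. The paper compresses your argument into a single sentence (``it is well known that if $(X,\mathcal{B},\mu,T)$ is not weakly mixing, then there exists a nontrivial null factor''), invoking Theorem~\ref{null-zero} and Proposition~\ref{lift}(3); your explicit handling of the non-ergodic case and the construction of the rotation factor via an eigenfunction simply unpack what the paper takes as well known.
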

\begin{proof} It is well known that if $(X,\mathcal{B},\mu,T)$ is not weakly mixing,
then there exists a nontrivial factor $(Y,\mathcal{D},\nu,S)$
of $(X,\mathcal{B},\mu,T)$ with discrete spectrum. By Kushnirenko \cite{Ku}, $(Y,\mathcal{D},\nu,S)$ is null. By Theorem \ref{null-zero},
$\overline{D}_\nu(Y,S)=0$, a contradiction with Proposition \ref{lift} (3).
\end{proof}
As a direct application of Theorem \ref{dis}, we have
\begin{corollary} \label{cor-1} 1. $\alpha-$u.d. invertible MDS's are disjoint from ergodic $\beta-$u.d. MDS's
when $1\ge \alpha>\beta\ge 0$.

2. An invertible MDS  which has strictly positive entropy dimension
is disjoint from all ergodic $0$-entropy dimension MDS's.
\end{corollary}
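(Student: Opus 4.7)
The plan is straightforward: both parts should follow by unpacking the definitions of $\tau$-u.d.\ and strictly positive entropy dimension and then invoking Theorem \ref{dis}. The only content is to verify that in each case the hypothesis $\mathrm{Dims}_\mu(X,T)>\overline{D}_\nu(Y,S)$ of that theorem is satisfied. No genuine obstacle is expected since Theorem \ref{dis} already does all the work of producing the Bernoulli $\mathbb{Z}$-extension $\nu_\infty$, disintegrating $\lambda$ over $\mu$, and extracting the key $R$-invariance argument; here we merely check an inequality.

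For part 1, I would argue as follows. By the definition of $\alpha$-u.d., $\mathrm{Dims}_\mu(X,T)=\{\alpha\}$; in particular $\alpha$ is the unique dimension appearing among nontrivial two-set partitions of $X$. By the definition of $\beta$-u.d.\ together with the Remark following Definition \ref{definition-ud}, $\overline{D}_\nu(Y,S)=\sup\{\tau\in\mathrm{Dims}_\nu(Y,S)\}=\beta$. Since the hypothesis $\alpha>\beta$ implies $\tau>\overline{D}_\nu(Y,S)$ for every (the only) $\tau\in\mathrm{Dims}_\mu(X,T)$, Theorem \ref{dis} gives disjointness of $(X,\mathcal{B},\mu,T)$ and $(Y,\mathcal{D},\nu,S)$.

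For part 2, the set-up is almost the same. The assumption that $(X,\mathcal{B},\mu,T)$ has strictly positive entropy dimension is precisely $0\notin\mathrm{Dims}_\mu(X,T)$, so every $\tau\in\mathrm{Dims}_\mu(X,T)$ is strictly positive. An ergodic $0$-entropy dimension system $(Y,\mathcal{D},\nu,S)$ satisfies $\overline{D}_\nu(Y,S)=0$, again using the Remark after Definition \ref{definition-ud}. Hence $\tau>0=\overline{D}_\nu(Y,S)$ for every $\tau\in\mathrm{Dims}_\mu(X,T)$, so Theorem \ref{dis} applies and produces disjointness.

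The only mildly delicate point is the degenerate case $\mathrm{Dims}_\mu(X,T)=\emptyset$, which by the Remark means $(X,\mathcal{B},\mu,T)$ is trivial; in that case the assumptions of the two statements would be either vacuous or need to exclude this case implicitly, and disjointness of a trivial system with anything is automatic. This is the only subtlety; the rest is a direct quotation of Theorem \ref{dis}.
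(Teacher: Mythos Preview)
Your proposal is correct and matches the paper's approach exactly: the paper states this corollary as ``a direct application of Theorem \ref{dis}'' and gives no further proof, so your unpacking of the definitions to verify the hypothesis $\mathrm{Dims}_\mu(X,T)>\overline{D}_\nu(Y,S)$ is precisely the intended argument. Your treatment of the degenerate empty-dimension-set case is more careful than the paper itself, but harmless.
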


The following example
shows that two systems with the same entropy dimension may also have
disjointness property.
\begin{example}
Choose $0<r_i\nearrow1$ and let $(X_i,\mathcal B_i,\mu_i,T_i)$ be
$r_i-$u.d. invertible MDS(in section 5 we will show  existence of
such MDS's 
). Let $(X,\mathcal B,\mu,T)$ be the product system, i.e.
$(X,\mathcal
B,\mu,T)=(\prod_{i=1}^{\infty}X_i,\prod_{i=1}^{\infty}\mathcal
B_i,\prod_{i=1}^{\infty}\mu_i,\prod_{i=1}^{\infty}T_i)$. Then
$\overline{D}_\mu(X,T)=1$ by Theorem \ref{prop-basic}. Also, since each
$(X_i,\mathcal B_i,\mu_i,T_i)$ is weakly mixing, so is $(X,\mathcal
B,\mu,T)$, and hence ergodic. Since $h_\mu(X,T)=0$, any
K-automorphism MDS (which is also a 1-u.d. system) is disjoint from the ergodic MDS $(X,\mathcal
B,\mu,T)$.
\end{example}

\section{Metric entropy dimension of an open cover}

By a {\it topological dynamical system} (TDS for short) $(X,T)$ we
mean a compact metrizable space $X$ together with a surjective
continuous map $T$ from $X$ to itself. Let $(X,T)$ be a TDS and
$\mu\in M(X,T)$, where $M(X,T)$ denotes the collection of invariant
probability measures of $(X,T)$. Denote by $\mathcal{C}_X$ the set
of finite covers of $X$ and $\mathcal{C}^o_X$ the set of finite open
covers of $X$. For a $\mathcal{U}\in \mathcal{C}_X$, we define
$$H_\mu(\mathcal{U})=\inf \{ H_\mu(\alpha):\alpha\in \mathcal{P}_X
\text{ and } \alpha\succeq \mathcal{U}\},$$ where by $\alpha\succeq
\mathcal{U}$ we mean that every atom of $\alpha$ is contained in one
of the elements of $\mathcal U$. We say an increasing sequence
$S=\{s_1<s_2<\cdots\}$ of $\N$ is an {\it entropy generating
sequence} of $\mathcal{U}$ w.r.t. $\mu$ if
$$\liminf_{n\rightarrow\infty}\frac{1}{n}H_\mu(\bigvee_{i=1}^nT^{-s_i}\mathcal{U})>0.$$
We say $S=\{s_1<s_2<\cdots\}$ of $\N$ is a {\it positive entropy
sequence} of $\mathcal{U}$  w.r.t. $\mu$ if
$$h^S_{\mu}(T,\mathcal{U}):=
\limsup_{n\rightarrow\infty}\frac{1}{n} H_\mu
(\bigvee_{i=1}^nT^{-s_i}\mathcal{U})>0.$$

Denote ${\cal E}_\mu(T,\mathcal{U})$ by the set of all entropy
generating sequences of $\mathcal{U}$, and
$\mathcal{P}_\mu(T,\mathcal{U})$ by the set of all positive
entropy sequences of $\mathcal{U}$. Clearly
$\mathcal{P}_\mu(T,\mathcal{U})\supset
\mathcal{E}_\mu(T,\mathcal{U})$.

\begin{definition} \label{c-de-s-1} Let $(X,T)$ be a TDS, $\mu\in
M(X,T)$ and $\mathcal{U}\in \mathcal{C}_X$. We define
$$\overline{D}^e_\mu(T,\mathcal{U})=\begin{cases} \sup \limits_{S\in \mathcal{E}_\mu(T,\mathcal{U})}
\overline{D}(S)\, &\text{if } \mathcal{E}_\mu(T,\mathcal{U})\neq\emptyset\\
0 \, &\text{if } \mathcal{E}_\mu(T,\mathcal{U})=\emptyset
\end{cases},$$
$$\overline{D}^p_\mu(T,\U)=\begin{cases} \sup \limits_{S\in \mathcal{P}_\mu(T,\mathcal{U})} \overline{D}(S)\,
&\text{if } \mathcal{P}_\mu(T,\mathcal{U})\neq \emptyset\\ 0 \,
&\text{if } \mathcal{P}_\mu(T,\mathcal{U})=\emptyset
\end{cases}.$$
Similarly, we can define $\underline{D}^e_\mu(T,\U)$ and
$\underline{D}^p_\mu(T,\alpha)$ by changing the upper dimension into
the lower dimension.
\end{definition}

Similar to Proposition \ref{prop-kkk} we have
\begin{proposition}\label{c-prop-re} Let $(X,T)$ be a TDS, $\mu\in
M(X,T)$ and $\mathcal{U}\in \mathcal{C}_X$. Then
$$\underline{D}^e_\mu(T,\mathcal{U})\le \overline{D}^e_\mu(T,\mathcal{U})= \underline{D}^p_\mu(T,\mathcal{U})\le
\overline{D}^p_\mu(T,\mathcal{U}).$$
\end{proposition}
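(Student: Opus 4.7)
The statement is the cover-theoretic analogue of Proposition \ref{prop-kkk}, so the plan is to replay that proof line for line, with the partition entropy $H_\mu(\alpha)$ replaced by the cover entropy $H_\mu(\mathcal{U})=\inf\{H_\mu(\alpha):\alpha\succeq\mathcal{U}\}$. The two outer inequalities $\underline{D}^e_\mu(T,\mathcal{U})\le\overline{D}^e_\mu(T,\mathcal{U})$ and $\underline{D}^p_\mu(T,\mathcal{U})\le\overline{D}^p_\mu(T,\mathcal{U})$ are immediate from Definition \ref{c-de-s-1}. For the substantive middle equality I will need three properties of $H_\mu$ on covers that transfer from the partition case directly from the infimum definition: monotonicity under refinement, subadditivity $H_\mu(\mathcal{U}\vee\mathcal{V})\le H_\mu(\mathcal{U})+H_\mu(\mathcal{V})$ (by choosing refining partitions whose entropies nearly attain the two infima and summing), and $T$-invariance $H_\mu(T^{-1}\mathcal{U})=H_\mu(\mathcal{U})$.

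For $\overline{D}^e_\mu(T,\mathcal{U})\le\underline{D}^p_\mu(T,\mathcal{U})$, given $\tau<\overline{D}^e_\mu(T,\mathcal{U})$ I choose $S=\{s_1<s_2<\cdots\}\in\mathcal{E}_\mu(T,\mathcal{U})$ with $\overline{D}(S)>\tau$ and form $F=S\cup\{\lfloor n^{1/\tau}\rfloor:n\in\mathbb{N}\}$. The bookkeeping $n\le m(n)\le n+s_n^\tau$ and $\limsup n/(n+s_n^\tau)=1$ from the proof of Proposition \ref{prop-kkk} transfer verbatim, and monotonicity of $H_\mu$ under refinement then gives
$$\limsup_{m\to\infty}\frac{1}{m}H_\mu\Bigl(\bigvee_{i=1}^m T^{-f_i}\mathcal{U}\Bigr)\ge\liminf_{n\to\infty}\frac{1}{n}H_\mu\Bigl(\bigvee_{i=1}^n T^{-s_i}\mathcal{U}\Bigr)>0,$$
so that $F\in\mathcal{P}_\mu(T,\mathcal{U})$ with $\underline{D}(F)\ge\tau$.

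For the reverse inequality $\underline{D}^p_\mu(T,\mathcal{U})\le\overline{D}^e_\mu(T,\mathcal{U})$ I reproduce Fact A. Starting from $S\in\mathcal{P}_\mu(T,\mathcal{U})$ with $\underline{D}(S)>\tau$ and a subsequence $(n_k)$ on which $H_\mu(\bigvee_{i=1}^{n_k}T^{-s_i}\mathcal{U})\ge n_k b$, I impose the same gap condition $n_{k+1}\ge 4(H_\mu(\mathcal{U})+1)b^{-1}\sum_{j\le k}n_j$ and set $c=b/(4(H_\mu(\mathcal{U})+1))$. The combinatorial Claim runs identically: its proof only ever invokes subadditivity, the trivial bound $H_\mu(\bigvee_{i\in E}T^{-s_i}\mathcal{U})\le|E|H_\mu(\mathcal{U})$, and $T$-invariance. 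Setting $F=\bigcup_k\{s_i:i\in F_k\}$, the estimate $\overline{D}(F)\ge\tau$ is unchanged, and the three-case split (within a single block, across two consecutive blocks, or across three or more) still yields $H_\mu(\bigvee_{i=m_1}^{m_2}T^{-f_i}\mathcal{U})\ge(m_2-m_1+1)\,bc/8$, using only monotonicity and the elementary inequality $H_\mu(\mathcal{A}\vee\mathcal{B})\ge\max\{H_\mu(\mathcal{A}),H_\mu(\mathcal{B})\}\ge\tfrac12(H_\mu(\mathcal{A})+H_\mu(\mathcal{B}))$. Thus $F\in\mathcal{E}_\mu(T,\mathcal{U})$ with $\overline{D}(F)\ge\tau$, which completes the proof.

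The only genuine obstacle is verifying the three cover-level properties of $H_\mu$ listed in the first paragraph; once these are in hand, the proof of Proposition \ref{prop-kkk} applies without any new dynamical or measure-theoretic input.
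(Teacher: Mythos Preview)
Your proposal is correct and takes essentially the same approach as the paper, which simply states that the result is proved ``similar to Proposition \ref{prop-kkk}'' without writing out any details. You have correctly isolated the only nontrivial point, namely that the three structural properties of $H_\mu$ on covers (monotonicity, subadditivity, $T$-invariance) follow from the infimum definition, after which the proof of Proposition \ref{prop-kkk} transfers verbatim.
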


By Proposition \ref{c-prop-re}, we have
\begin{definition} \label{c-de-s-3} Let $(X,T)$ be a TDS, $\mu\in
M(X,T)$ and $\mathcal{U}\in \mathcal{C}_X$. We define
$$\overline{D}_\mu(T,\mathcal{U}):=\overline{D}^e_\mu(T,\mathcal{U})= \underline{D}^p_\mu(T,\mathcal{U}),$$
which is called the upper entropy dimension of $\mathcal{U}$. Similarly, we have the definition of lower dimension and dimension.
\end{definition}

\begin{theorem}\label{thm-s-4} Let $(X,T)$ be a TDS and $\mu\in
M(X,T)$. Then
$$\overline{D}_\mu(X,T)=\sup_{\mathcal{U} \in \mathcal{C}^o_X}
\overline{D}_\mu(T,\mathcal{U}),$$
where $\mathcal{C}^o_X$ is the set of finite open
covers of $X$.
\end{theorem}
\begin{proof} Let $\mathcal{U}=\{ U_1,U_2,\cdots,U_n\}\in \mathcal{C}_X^o$. For any
$s=(s(1),\cdots,s(n))\in { \{ 0,1 \} }^n$, set
$U_s=\bigcap_{i=1}^{n}U_i(s(i))$, where $U_i(0)=U_i$ and
$U_i(1)=X\setminus U_i$. Let $\alpha=\{ U_s: s\in { \{ 0,1 \} }^n
\}$. Then $\alpha$ is the Borel partition generated by $\mathcal{U}$
and $\overline{D}_\mu(X,T)\ge \overline{D}_\mu(T,\alpha)\ge \overline{D}_\mu(T,\mathcal{U})$. Since
$\mathcal{U}$ is arbitrary, we get $\overline{D}_\mu(X,T)\ge \sup_{\mathcal{U}
\in \mathcal{C}^o_X} \overline{D}_\mu(T,\mathcal{U})$.

For the other direction, let $\alpha=\{ A_1,\cdots,A_k \}\in
\mathcal{P}_X$. If $\overline{D}_\mu(T,\alpha)=0$, it is obvious
$\overline{D}_\mu(T,\alpha)\le \sup_{\mathcal{U} \in \mathcal{C}^o_X}
\overline{D}_\mu(T,\mathcal{U})$. Now assume that $\overline{D}_\mu(T,\alpha)>0$. For
any $\epsilon>0$, there exists $S=\{ s_1<s_2<s_3<\cdots\} \in
\mathcal{P}_\mu(T,\alpha)$ with
$\underline{D}(S)>\overline{D}_\mu(T,\alpha)-\epsilon$.

Let $a:=\frac{h_\mu^S(T,\alpha)}{2}>0$. We have

\noindent{\bf Claim.} There exists $\mathcal{U}\in \mathcal{C}_X^{
o}$ such that $H_{\mu} (T^{-i}\alpha| \beta )\le a$ if $i\in
\mathbb{Z}_+$ and $\beta\in \mathcal{P}_X$ satisfying $\beta\succeq
T^{-i} \mathcal{U}$.

\begin{proof}[Proof of Claim]
By \cite[Lemma 4.15]{Wal}, there exists $\delta_1= \delta_1 (k,
\epsilon)>0$ such that if
 $\beta_i=\{ B^i_1,\cdots,B^i_k \}\in \mathcal{P}_X, i= 1, 2$ satisfy
 $\sum_{i=1}^k
\mu(B^1_i\Delta B^2_i)<\delta_1$ then $H_{\mu}
(\beta_1|\beta_2)\le a$. Since $\mu$ is regular, we can take
closed subsets $B_i\subseteq A_i$ with $\mu(A_i\setminus
B_i)<\frac{\delta_1}{2k^2}$, $i=1,\cdots,k$. Let $B_0=X\setminus
\bigcup_{i=1}^kB_i$, $U_i=B_0\cup B_i,i=1,\cdots,k$. Then
$\mu(B_0)<\frac{\delta_1}{2k}$ and $\mathcal{U}=\{ U_1,\cdots,U_k
\}\in \mathcal{C}^{o}_X$.

Let $i\in \mathbb{Z}_+$. If $\beta\in \mathcal{P}_X$ is finer than
$T^{-i} \mathcal{U}$, then we can find $\beta' =\{ C_1,\cdots,C_k
\}\in \mathcal{P}_X$ satisfying
 $C_j\subseteq T^{-i} U_j, \ j=1,\cdots,k$ and $\beta\succeq
\beta'$, and so $H_{\mu} (T^{-i}\alpha| \beta)\le H_{\mu}
(T^{-j}\alpha| \beta')$. For each $j=1, \cdots, k$, as
$T^{-i}U_j\supseteq C_j \supseteq X\setminus \bigcup_{l\neq j}
T^{-i}U_l=T^{-i}B_j$ and $T^{-i}A_j\supseteq T^{-i}B_j$, one has
\begin{align*}
\mu(C_j\Delta T^{-i}A_j)&\le \mu(T^{-i}A_j\setminus
T^{-i}B_j)+\mu(T^{-i}B_0)=\mu(A_j\setminus
B_j)+\mu(B_0)\\
&<\frac{\delta_1}{2k}+\frac{\delta_1}{2k^2}\le \frac{\delta_1}{k}.
\end{align*}
Thus $\sum \limits_{j=1}^k \mu(C_j\Delta T^{-i}A_j)<\delta_1$.  It
follows that $H_{\mu}(T^{-i}\alpha|\beta')\le a$ and so
$H_{\mu}(T^{-i}\alpha|\beta)\le a$.
\end{proof}

For $n\in \mathbb{N}$, if $\beta_n\in \mathcal{P}_X$ with
$\beta_n\succeq \bigvee \limits_{i=1}^n T^{-s_i}\mathcal{U}$ then
$\beta_n\succeq T^{-s_i}\mathcal{U}$ for each $i\in \{
1,2,\cdots,n\}$, and so using the above Claim one has
\begin{align*}
H_{\mu}(\bigvee_{i=1}^n T^{-s_i}\alpha) &\le H_{\mu}(\beta_n)+
H_{\mu}(\bigvee_{i=1}^n T^{-s_i}\alpha|\beta_n) \\
&\le H_{\mu}(\beta_n)+\sum_{i=1}^nH_{\mu}(T^{-s_i}\alpha|\beta_n)
\le H_{\mu}(\beta_n)+na.
\end{align*}
Moreover, since the above inequality is true for any $\beta_n\in
\mathcal{P}_X$ with $\beta_n\succeq \bigvee \limits_{i=1}^n
T^{-s_i}\mathcal{U}$, one has $H_\mu (\bigvee \limits_{i=1}^n
T^{-s_i}\alpha)\le H_\mu (\bigvee \limits_{i=1}^n
T^{-s_i}\mathcal{U})+ na$. Recall that $h_\mu^S(T,\alpha)=2a$, one has $h_\mu^S(T,\mathcal{U})\ge a>0$.
Also recall that $\overline{D}_\mu(T,\alpha)= \underline{D}^p_\mu(T,\alpha)$ (Definition \ref{de-s-3}),
we then have
$$\overline{D}_\mu(T,\mathcal{U})\ge \underline{D}(S)\ge \underline{D}^p_\mu(T,\alpha)-\epsilon
=\overline{D}_\mu(T,\alpha)-\epsilon.$$ This implies $\sup_{\mathcal{U} \in
\mathcal{C}^o_X} \overline{D}_\mu(T,\mathcal{U})\ge \overline{D}_\mu(T,\alpha)-\epsilon$.
Finally, since $\alpha$ and $\epsilon$ are arbitrary, we get
$\sup_{\mathcal{U} \in \mathcal{C}^o_X} \overline{D}_\mu(T,\mathcal{U})\ge
\overline{D}_\mu(X,T)$.
\end{proof}

Now let us recall the corresponding notions in topological
setttings, which appeared in \cite{DHP}.

Let $(X,T)$ be a TDS and $\U\in \mathcal{C}_X^o$. We recall
\cite{DHP} that an increasing sequence $S=\{s_1<s_2<\cdots\}$ of
$\N$ is an {\it entropy generating sequence} of $\U$ if
$$\liminf_{n\rightarrow\infty}\frac{1}{n}\log \mathcal N(\bigvee_{i=1}^nT^{-s_i}\U)>0,$$
and $S=\{s_1<s_2<\cdots\}$ of $\N$ is a {\it positive entropy
sequence} of $\U$ if
$$h^S_{\text{top}}(T,\mathcal{U}):=
\limsup_{n\rightarrow\infty}\frac{1}{n}\log
\mathcal N(\bigvee_{i=1}^nT^{-s_i}\U)>0.$$
Here $\mathcal N(\mathcal U)$ is the number of the sets in a subcover of $\mathcal U$ with smallest cardinality.

Denote by ${\cal E}(T,\U)$ the set of all entropy generating
sequences of $\U$, and by $\mathcal{P}(T,\mathcal{U})$ the set of
all positive entropy sequences of $\U$. Clearly
$\mathcal{P}(T,\mathcal{U})\supset \mathcal{E}(T,\mathcal{U})$.

Let $(X,T)$ be a TDS and $\mathcal{U}\in \mathcal{C}_X^o$. We define
$$\overline{D}_e(T,\U)=\begin{cases} \sup_{S\in \mathcal{E}(T,\U)} \overline{D}(
S)\, &\text{if } \mathcal{E}(T,\U)\neq \emptyset\\ 0 \, &\text{if
} \mathcal{E}(T,\U)=\emptyset \end{cases},$$
$$\underline{D}_p(T,\U)=\begin{cases} \sup_{S\in \mathcal{P}(T,\U)}
\underline{D}(S)\, &\text{if } \mathcal{P}(T,\U)\neq\emptyset\\
0 \, &\text{if } \mathcal{P}(T,\U)=\emptyset \end{cases}.$$

It is similar to the proof of Proposition \ref{prop-kkk}, we have
$\overline{D}_e(T,\U)=\underline{D}_p(T,\U)$ for any
$\mathcal{U}\in \mathcal{C}_X^o$. Hence we define
$$\overline{D}(T,\mathcal{U}):=\overline{D}_e(T,\U)=\underline{D}_p(T,\U) \text{ and}$$
$$\overline{D}(X,T)=\sup_{\U \in \mathcal{C}_X^0}
\overline{D}(T,\mathcal{U}).$$ We call $\overline{D}(X,T)$ the upper entropy dimension of
$(X,T)$. Similarly, we have the definition of lower dimension and dimension.

Using Theorem \ref{thm-s-4}, we have
\begin{theorem}  \label{thm-ine}Let $(X,T)$ be a TDS and $\mu\in
M(X,T)$. Then
$$\overline{D}_\mu(X,T)\le \overline{D}(X,T).$$
\end{theorem}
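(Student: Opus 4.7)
The plan is to reduce Theorem \ref{thm-ine} to a cover-by-cover comparison and then invoke Theorem \ref{thm-s-4}. Specifically, it suffices to show that for every finite open cover $\mathcal{U} \in \mathcal{C}^o_X$ one has
$$\overline{D}_\mu(T,\mathcal{U}) \le \overline{D}(T,\mathcal{U}),$$
because taking $\sup$ over $\mathcal{U} \in \mathcal{C}_X^o$ on the left side yields $\overline{D}_\mu(X,T)$ by Theorem \ref{thm-s-4}, while the right side is bounded above by $\overline{D}(X,T)$ by definition.

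To establish this cover-by-cover inequality, I would use the elementary fact that $H_\mu(\mathcal{V}) \le \log N(\mathcal{V})$ for any finite cover $\mathcal{V}$ of $X$: given a minimal subcover $\{V_1,\dots,V_N\}$ of $\mathcal{V}$, the partition $\{V_1, V_2\setminus V_1, \dots, V_N\setminus\bigcup_{i<N}V_i\}$ refines $\mathcal{V}$ and has at most $N$ atoms, so $H_\mu(\mathcal{V}) \le \log N(\mathcal{V})$. Applying this to $\bigvee_{i=1}^n T^{-s_i}\mathcal{U}$ for any increasing sequence $S = \{s_1 < s_2 < \cdots\}$ gives
$$\frac{1}{n} H_\mu\bigl(\bigvee_{i=1}^n T^{-s_i}\mathcal{U}\bigr) \le \frac{1}{n} \log N\bigl(\bigvee_{i=1}^n T^{-s_i}\mathcal{U}\bigr)$$
for all $n$. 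Consequently, if $S \in \mathcal{E}_\mu(T,\mathcal{U})$, i.e. the $\liminf$ of the left side is positive, then so is the $\liminf$ of the right side, meaning $S \in \mathcal{E}(T,\mathcal{U})$. Thus $\mathcal{E}_\mu(T,\mathcal{U}) \subseteq \mathcal{E}(T,\mathcal{U})$.

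Taking the supremum of $\overline{D}(S)$ over the smaller class $\mathcal{E}_\mu(T,\mathcal{U})$ gives at most the supremum over $\mathcal{E}(T,\mathcal{U})$, which by Definitions \ref{c-de-s-3} and the definition of $\overline{D}(T,\mathcal{U})$ reads
$$\overline{D}_\mu(T,\mathcal{U}) = \overline{D}^e_\mu(T,\mathcal{U}) \le \overline{D}_e(T,\mathcal{U}) = \overline{D}(T,\mathcal{U}).$$
Taking sup over $\mathcal{U}\in \mathcal{C}_X^o$ and applying Theorem \ref{thm-s-4} concludes the proof. There is no serious obstacle here: the argument is essentially a one-line variational inequality coupled with the characterization of $\overline{D}_\mu(X,T)$ via open covers that Theorem \ref{thm-s-4} already supplies, which is precisely why the theorem is stated so compactly.
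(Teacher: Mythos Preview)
Your argument is correct and is precisely the approach the paper intends: the paper simply writes ``Using Theorem \ref{thm-s-4}, we have'' before stating Theorem \ref{thm-ine}, leaving implicit the cover-by-cover inequality $\overline{D}_\mu(T,\mathcal{U})\le \overline{D}(T,\mathcal{U})$ that you have spelled out via $H_\mu(\mathcal{V})\le \log N(\mathcal{V})$ and the resulting inclusion $\mathcal{E}_\mu(T,\mathcal{U})\subseteq \mathcal{E}(T,\mathcal{U})$.
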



\begin{example} For any $\tau\in (0,1]$, there exists a minimal system $(X,T)$ satisfying
$D(X,T)=\tau$ and $D_\mu(X,T)=0$ for any $\mu\in M(X,T)$.
\end{example}
\begin{proof}
Let $(X,T)$ be the system generated by Cassaigne's model \cite{C}(the
uniformly recurrent one), then it is minimal. By taking
$\phi(n)=\frac{n}{\log n}$ in this construction, we get $D(X,T)=1$.
By taking $\phi(n)=n^{\tau}$ in this construction, we get
$D(X,T)=\tau$, for any $0<\tau<1$. In \cite{ADP}, it is shown that
$(X,T)$ is uniquely ergodic and with respect to the unique ergodic invariant
measure $\mu$, $D_\mu(X,T)=0$.
\end{proof}

\begin{definition} An invertible TDS $(X,T)$ is {\it doubly minimal} if for all
$x,y\in X$, $y\in \{ T^nx\}_{n\in \mathbb{Z}}$, $\{
(T^jx,T^jy)\}_{j\in \mathbb{Z}}$ is dense in $X\times X$.
\end{definition}

The following results is Theorem 5 in \cite{W}.
\begin{lemma} \label{lem-expre} Any ergodic system $(Y,\mathcal{C},\nu,S)$ with
$h_\nu(S)=0$ has a uniquely ergodic topological model $(X,T)$ that
is doubly minimal.
\end{lemma}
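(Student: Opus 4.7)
The plan is to bootstrap from the Jewett--Krieger theorem, which already supplies a uniquely ergodic topological model of any ergodic measure preserving system, and then to refine that model to secure double minimality. The zero entropy hypothesis will enter in an essential way: it forces every ergodic self-joining of $(Y,\mathcal{C},\nu,S)$ that is not concentrated on the orbit equivalence relation to coincide with $\nu\times\nu$, and dually, $\nu\times\nu$-almost every pair $(y,y')$ is generic for $\nu\times\nu$ under $S\times S$. The goal of the construction is to promote this almost-everywhere genericity to an everywhere statement in a carefully chosen symbolic model.

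First, I would recall the tower construction underlying Jewett--Krieger: build a nested sequence of Kakutani--Rokhlin towers on $(Y,\mathcal{C},\nu,S)$ whose bases shrink and whose level partitions generate $\mathcal{C}$, then code points of a conull subset into sequences over a finite alphabet to produce a uniquely ergodic subshift $(X_0,T_0)$ measurably conjugate to $(Y,S)$. Second, I would modify the column-naming rule at each stage so that, for \emph{any} two symbolic sequences corresponding to points on distinct orbits, the joint name read along a long enough window approximates an initial segment of a $\nu\times\nu$-generic orbit of $S\times S$. Concretely, one imposes that when two different tower columns at stage $k$ are paired level by level, their combined two-letter statistics approach the $\nu\times\nu$ statistics on cylinder sets of increasing length.

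The central technical step, and the main obstacle, is to execute this paired coding so that single-point statistics remain those of $\nu$ (preserving unique ergodicity and measurable conjugacy to $(Y,S)$) while paired statistics tend to $\nu\times\nu$ (forcing density of off-orbit pair orbits). Two ingredients will do the work: the Shannon--McMillan--Breiman theorem applied to $(Y\times Y,\nu\times\nu,S\times S)$, which has entropy zero and hence concentrates its ``joint names'' on a subexponentially small set at each scale, so that one can realize the required joint names by combinatorially arranging pairings of columns without destroying unique ergodicity; and the fact, due to zero entropy, that every ergodic self-joining off the orbit relation equals $\nu\times\nu$, so there is no other ergodic measure on $X\times X$ competing for the closure of a typical off-diagonal orbit. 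Positive entropy would allow genuinely non-product ergodic self-joinings off the orbit relation and would obstruct the convergence of pair-statistics to $\nu\times\nu$.

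Finally, I would verify double minimality: let $(X,T)$ denote the refined subshift with unique invariant measure $\mu$ corresponding to $\nu$. Given $x,y\in X$ with $y\notin\overline{\{T^nx\}_{n\in\mathbb Z}}$, any weak$^*$ limit point of the empirical measures $\tfrac{1}{N}\sum_{j=0}^{N-1}\delta_{(T^jx,T^jy)}$ is a $T\times T$-invariant probability measure on $X\times X$ whose marginals are both $\mu$; by construction it must equal $\mu\times\mu$, whose support is $X\times X$, so $\{(T^jx,T^jy)\}_{j\in\mathbb Z}$ is dense. Combined with the preserved Jewett--Krieger properties, this yields a uniquely ergodic doubly minimal topological model of $(Y,\mathcal{C},\nu,S)$.
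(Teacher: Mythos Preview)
The paper does not prove this lemma at all; it merely quotes it as Theorem~5 of Weiss~\cite{W}. So there is no in-paper argument to compare against, and your proposal is really an attempted reconstruction of Weiss's proof.

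Your outline has the correct skeleton---a Jewett--Krieger tower construction in which the column names at each stage are arranged so that any two non-orbit-equivalent points have joint block statistics approaching those of the product measure---but one of your two ``ingredients'' is false, and your verification step rests on it. You assert that zero entropy forces ``every ergodic self-joining off the orbit relation to equal $\nu\times\nu$''. This fails already for an irrational rotation $R_\alpha$ on $\mathbb T$: for any $\beta\notin\mathbb Z\alpha$ the graph joining supported on $\{(x,x+\beta)\}$ is an ergodic self-joining living entirely off the orbit relation and is certainly not the product. Zero entropy systems typically have a rich supply of non-product self-joinings. Consequently your final paragraph collapses: knowing that a weak-$*$ limit of the empirical measures $\tfrac1N\sum_{j=0}^{N-1}\delta_{(T^jx,T^jy)}$ is \emph{some} self-joining does not pin it down as $\mu\times\mu$.

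What zero entropy genuinely buys, and what Weiss uses, is the counting you mention via Shannon--McMillan--Breiman: there are only subexponentially many $n$-names, which leaves enough combinatorial room to arrange at each stage that \emph{every} pair of distinct columns, read jointly, displays all joint blocks of the previous scale with approximately product frequency. Double minimality then follows directly from this combinatorial property of the symbolic model for all pairs of points on distinct orbits, not from a structural classification of self-joinings. One further slip: in your last paragraph you take $y\notin\overline{\{T^nx\}_{n\in\mathbb Z}}$, but in a minimal system that closure is all of $X$, making your hypothesis vacuous; the correct condition is $y\notin\{T^nx\}_{n\in\mathbb Z}$.
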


It is well known that any doubly minimal system has zero entropy
(see \cite{W}). However we have
\begin{example} There exists a doubly minimal system with positive
entropy dimension.
\end{example}
\begin{proof} This comes directly from Lemma \ref{lem-expre} and
Theorem \ref{thm-ine} since there exists an ergodic system with
metric entropy dimension $0<\tau<1$(see section 5).
\end{proof}

 A TDS $(X,T)$ with metric $d$ is called {\it  distal}, if
$\inf_{n\ge 0} d(T^nx,T^ny)>0$
 for every $x\neq y\in X$. Let $(X,\mathcal{B},\mu,T)$ be an invertible ergodic MDS.  A
sequence $A_1\supset A_2\supset A_3\cdots$ of sets in $\mathcal{B}$
with $\mu(A_n)>0$ and $\mu(A_n)\rightarrow  0$, is called {\it a
separating sieve} if there exists a subset $X_0\subset X$ with
$\mu(X_0) = 1$ such that for every $x, x'\in X_0$ the condition for
every $n\in \mathbb{N}$ there exists $k\in \mathbb{Z}$ with $T^kx,
T^kx'\in A_n$ implies $x = x'$. We say that the invertible ergodic
MDS $(X,\mathcal{B},\mu,T)$ is {\it measure distal} if either
$(X,\mathcal{B},\mu,T)$ is finite or there exists a separating
sieve. In \cite{Lin} E. Lindenstrauss shows that every invertible
ergodic measure distal MDS can be represented as a minimal
topologically distal system.

It is well known that a distal TDS has zero topological entropy, and
an invertible ergodic measure distal MDS has zero measure entropy.
  To end this section let us ask the
following questions:

\begin{question}
\begin{enumerate}
\item Is the entropy dimension of a minimal
distal TDS zero?
\item Is the entropy dimension of an invertible
ergodic measure distal MDS zero?
\end{enumerate}
\end{question}
\section{The existence of u.d. MDS's}
In this section, our aim is to show that for every $\tau\in (0,1)$,
there exists a MDS $(X,\mathcal{B},\mu,T)$ having the property of
$\tau$-u.d.. We mention that a $K-$mixing system is of u.d. for $\tau=1$ and an irrational rotation is of u.d. for $\tau=0$.

Our construction employs the so called ``cutting and stacking" method.
Let $X$ be the interval $[0,1)$, $\mathcal {B}$ be the Borel $\sigma$-algebra on $[0,1)$ and $\mu$ be the Lebesgue measure on $[0,1)$.
In the cutting and stacking construction, $[0,1)$ will be cut into many subintervals and all of them are left closed and right open.
Let $B_i\subset [0,1), 1\le i\le h$, be $h$ disjoint subintervals of the same length. A {\it column} $C$ is the ordered set of these intervals,
i.e. $$C=\{B_1,B_2,\cdots,B_h\}=\{B_i: 1\le i\le h\}.$$
We can consider $C$ obtained by ``stacking" the $B_i$'s one by one. We say the column $C$ has {\it base} $B_1$, {\it top} $B_h$, {\it height} $h(C)=h$ and
{\it width} $w(C)=\text{ the length of }B_i$. Denote $|C|=\cup_{i=1}^hB_i$.
We call each $B_i$ a {\it level set} of $C$. For the column $C$, the map $T$ will map $B_i$
linearly onto $B_{i+1}$ for $1\le i \le n-1$, and is undefined on $B_n$. We call $C^0=\{B_1^0,TB_1^0,\cdots,T^{h-1}B_1^0\}$ a {\it subcolumn} of $C$ if $B_1^0\subset B_1$.
A {\it tower} $W$ is a finite collection of columns, which generally have different heights.
In this paper, all the columns of a tower will have the same height. The width of tower $W$ is $$w(W)=\sum\limits_{C \text{ is a column of }W} w(C).$$
The {\it cardinality} of a tower $W$, denoted by $\#W$, is the number of its columns. We denote by $|W|$ the union of all the level sets of its columns.
The {\it base} of the tower $W$, which is denoted by $base(W)$, is the union of all the bases of its columns. For the tower $W$, $T$ is considered to be
defined on each of its columns except on the tops of its columns. Hence for the tower $W$, $T$ is undefined on a set of measure $w(W)$.
The cutting and stacking method will construct a sequence of towers with widths tending to $0$ so that $T$ is invertible on the whole interval $[0,1)$ up to
a set of measure zero. (One may see \cite{Friedman,S1,S2} for the basics of the cutting and stacking method.)

In our construction, we divide $[0,1)$ into three parts:
$P_0=[0,\frac{\xi}{2})$, $P_1=[\frac{\xi}{2},\xi)$ and
$P_s=[\xi,1)$, where we will decide $\xi$ later (see \eqref{xi} in section 5.4) and ``s'' stands for ``spacer''. This will be our initial tower and any level set
of any other tower will be a subset of $P_0, P_1,\text{ or } P_s$. Due to the initial tower, we say that a level set $B$ has a {\it name} ``a'' if
$B\subset P_a, a=0,1,s$. The name of a column $C=\{B_1,B_2,\cdots,B_h\}$ is a word $b=b_1b_2\cdots b_h\in \{0,1,s\}^h$, where $b_i$ is the name of $B_i$.
The name of a tower is the collection of names of its columns. By $N(W)$ we denote the number of different names of columns of the tower $W$.
We say two columns are {\it isomorphic} if they have the same name (they don't need to have the same widths). We say two towers $W=\{C_1,C_2,\cdots,C_k\}$ and $W'=\{C_1',C_2',\cdots,C_k'\}$ with the same cardinality
are {\it isomorphic} if after some reordering, the columns $C_i$ and $C_i'$ are isomorphic and $w(C_i)=\lambda w(C_i')$ for some $\lambda>0$ and all $1\le i \le k$.
If $C^0$ is a subcolumn of a column $C$ of the tower $W$, then we may say that $C^0$ is a subcolumn of $W$.
A {\it segment} $S$ of height $\ell$ of a column $C=\{B_1,B_2,\cdots,B_h\}$ is a collection of consecutive level sets $\{B_{\ell'},B_{\ell'+1},\cdots,B_{\ell'+\ell-1}\}$
starting at some position $\ell'$ with $1\le \ell' \le h-\ell+1$. If $b=b_1b_2\cdots b_h\in \{0,1,s\}^h$ is the name of the column $C$, then the word $b_{\ell'}b_{\ell'+1}\cdots b_{\ell'+\ell-1}$ is the name of the segment $S$. We also write $S$ as $S=B_{\ell'}B_{\ell'+1}\cdots B_{\ell'+\ell-1}$ or $S=b_{\ell'}b_{\ell'+1}\cdots b_{\ell'+\ell-1}$. Let
$W$ and $W'$ be two towers such that the height of $W'$ is bigger than that of $W$. We say a segment $S$ of some column of $W'$ is a $W-$segment if
the name of $S$ is the same as the name of some column $C$ of $W$ (in this case, we also say $S$ is isomorphic to $C$).

Through cutting and stacking steps, we successively construct a sequence of towers to get a MDS
with a given entropy dimension $\tau\in (0,1)$, by controlling the heights of independent and repetition steps.
We can see clearly from the construction what the entropy generating sequence is. We use three types of operations which will be described in the next section.

\subsection{Three kinds of operations.}\ \

Now we will describe the three kinds of operations we need.

{\bf 1. Independent cutting and stacking.}

Let $W^1$ and $W^2$  be two towers with the same width $w$. Assume $W^j$ has $c_j-$many columns $C^j_1,C^j_2,\cdots,C^j_{c_j}$ for $j=1,2$. We divide
each column of $W^1$ into subcolumns according to the distribution of the columns of $W^2$. That is,
we divide each column $C^1_i$ into $c_2-$many subcolumns $C^1_{i,k}$ with width $w(C^1_{i,k})=w(C^1_i)\frac{w(C^2_k)}{w}$, $i=1,2,\cdots, c_1, k=1,2,\cdots,c_2$.
Likewise we divide each column $C^2_k$ into $c_1-$many subcolumns $C^2_{k,i}$ with width $w(C^2_{k,i})=w(C^2_k)\frac{w(C^1_i)}{w}$, $i=1,2,\cdots, c_1, k=1,2,\cdots,c_2$.
Since we have $w(C^1_{i,k})=w(C^2_{k,i})$, we stack each $C^2_{k,i}$ on top of $C^1_{i,k}$ to form a new column $C^1_{i,k}*C^2_{k,i}$.
Denote the new tower $\{C^1_{i,k}*C^2_{k,i}\}$ by $W^1*_{ind}W^2$.

For a tower $W$ and an integer $e\ge 1$, we equally divide $W$ into $e-$many subtower $W^1,W^2,\cdots,W^e$ (we divide each column of $W$ into $e-$many subcolumns equally and take all the $i-$th subcolumn to make the tower $W^i$). We call the tower
$Ind(W,e)=W^1*_{ind}W^2*_{ind}\cdots*_{ind}W^e$ the $e-$many independent cutting and stacking of $W$. We note that
$\#Ind(W,e)=(\#W)^e, h(Ind(W,e))=eh(W)$.
In fact we can cut each column of $W$ into $e(\#W)^{e-1}-$many subcolumns equally and then choose these subcolumns from different $e-$many combinations
of columns of $W$ to stack to form $Ind(W,e)$, i.e., the tower $Ind(W,e)$ is stacked by $e-$many $W$-segments independently.

{\bf 2. Repetitive cutting and stacking.}

For a tower $W=\{C_1,C_2,\cdots,C_c\}$ and an integer $r\ge 1$, we equally divide each column $C_i$ of $W$ into $r-$many subcolumns
$C_{i,1},C_{i,2},\cdots,C_{i,r}$ and stack them one by one to make a new column $C_{i,1}*C_{i,2}*\cdots*C_{i,r}$. Then we call the tower
$Rep(W,r)=\{C_{i,1}*C_{i_2}*\cdots*C_{i,r}:i=1,2,\cdots,c\}$ the $r-$many repetitive cutting and stacking of $W$. We note that $\#Rep(W,r)=\#W$.

{\bf 3. Inserting spacers while independent cutting and stacking.}

Let $W$ be a tower with columns $\{C_1,C_2,\cdots,C_c\}$ and $e,h^*\ge 1$ be two integers. Due to the definition of $Ind(W,e)$,
we can assume that the tower $Ind(W,e)$ is formed by columns $\overline{C}_{i_1}*\overline{C}_{i_2}*\cdots *\overline{C}_{i_e}$
for $i_1,i_2,\cdots,i_e\in\{1,2,\cdots,c\}$, where $\overline{C}_i$ is a subcolumn of $C_i$.
Cut each column
$\overline{C}_{i_1}*\overline{C}_{i_2}*\cdots *\overline{C}_{i_e}$ of $Ind(W,e)$ into $c-$many subcolumns equally, which we denote by
$(\overline{C}_{i_1}*\overline{C}_{i_2}*\cdots *\overline{C}_{i_e})_{i_{e+1}}$, $i_{e+1}=1,2,\cdots,c$.
Since $Ind(W,e)$ has $c^e$-many columns, the new tower has $c^{e+1}$-many columns. We write $(\overline{C}_{i_1}*\overline{C}_{i_2}*\cdots *\overline{C}_{i_e})_{i_{e+1}}$ as $\overline{C}_{i_1}^{i_{e+1}}*\overline{C}_{i_2}^{i_{e+1}}*\cdots *\overline{C}_{i_e}^{i_{e+1}}$ and call $\overline{C}_{i_k}^{i_{e+1}}$ the $k$-th $W$-segment of the column
$\overline{C}_{i_1}^{i_{e+1}}*\overline{C}_{i_2}^{i_{e+1}}*\cdots *\overline{C}_{i_e}^{i_{e+1}}$. Note that $\overline{C}_{i_k}^{i_{e+1}}$ is isomorphic to $C_i$.
Now we will insert $e \cdot h^*-$many spacers altogether between these $W$-segments
of $\overline{C}_{i_1}^{i_{e+1}}*\overline{C}_{i_2}^{i_{e+1}}*\cdots *\overline{C}_{i_e}^{i_{e+1}}$, where each spacer is an interval of length $w(\overline{C}_{i_1}^{i_{e+1}}*\overline{C}_{i_2}^{i_{e+1}}*\cdots *\overline{C}_{i_e}^{i_{e+1}})$ cut from $P_s$.
For $k=1,2,\cdots,e$, let $\ell=i_{k+1}(mod\ \ h^*), 0\le\ell\le h^*-1$, we insert $\ell-$many spacers before the $k-$th $W$-segment $\overline{C}_{i_k}^{i_{e+1}}$ and $(h^*-\ell)-$many spacers after.
That is, we change each $\overline{C}_{i_k}^{i_{e+1}}$ into $s^{\ell}\overline{C}_{i_k}^{i_{e+1}}s^{h^*-\ell}$ (here we identify the inserted spacer with its name ``$s$'').
Denote the new tower by $Inds(W,e,h^*)$. Each column of $Inds(W,e,h^*)$ is formed by $e-$many such segments of the form $s^{\ell}\overline{C}_{i_k}^{i_{e+1}}s^{h^*-\ell}$. We should notice here that some columns of $Inds(W,e,h^*)$ may have the same name. Furthermore,
\begin{equation}\label{equation-name}
(N(W))^e\le N(Inds(W,e,h^*))\le \#Inds(W,e,h^*)=(\#W)^{e+1}.
\end{equation}

We write the new tower $Inds(W,e,h^*)$ by $\overline {W}$. For convenience, we still call $s^{\ell}\overline{C}_{i_k}s^{h^*-\ell}$ the $k$-th
$W$-segment of the column of $\overline {W}$ or simply the $k$-th $W$-segment of $\overline {W}$ ignoring the spacers.

If we denote by $p_{\ell}$ the probability of all the columns of $\overline{W}$
whose $k$-th $W$-segment begins with $\ell$-many spacers, then $p_{\ell}$ is
either $\frac{\lfloor\frac{c}{h^*}\rfloor}{c}$ or
$\frac{\lfloor\frac{c}{h^*}\rfloor+1}{c}$, independent of $k$ and $\ell$.
Since $$\frac{|p_{\ell}-\frac{1}{h^*}|}{\frac{1}{h^*}}\le\frac{h^*}{c},$$ we may say that the number of beginning spacers of the $k$-th $W$-segment of $\overline{W}$
is uniformly distributed on $\{0,1,\cdots,h^*-1\}$ within $\frac{h^*}{c}-$error.

\subsection{The choice of the parameters.}\ \

To construct a MDS $(X,\mathcal{B},\mu,T)$ with $\tau$-u.d. for fixed $\tau\in (0,1)$, we need to define sequences of
integer parameters $1<r_1<r_2<\cdots,\, 1<e_0, e_1, e_2, \cdots$, $1\le w_0<w_1<w_2<\cdots$, $1\le h_0<h_1<h_2<\cdots$
and $1< \tilde h_0<\tilde h_1<\tilde h_2<\cdots$.

Given $\tau\in (0,1)$, we let $r_n=C_\tau n^2$, where $C_\tau$ is an integer such that $C_\tau^{\frac{\tau}{1-\tau}}>4$.
Let $1<l_1<n_1<l_2<n_2\cdots$ be any sequence of integers satisfying that
\begin{align}\label{nt}
  \sum_{t=1}^\infty\frac{1}{(n_t)^{\frac{2\tau}{1-\tau}}}<\infty.
\end{align}
Put
\begin{equation*}
 e_0=2, h_0=1,w_0=1 \text{ and } h_1=e_0.
\end{equation*}
Next we inductively construct $h_n,\tilde h_n, w_n, e_n$. For $n\ge 1$, put
\begin{equation}\label{condition0hw}
 \tilde h_n=h_nr_n,\, w_n=\begin{cases} \tilde h_n &\text{if } n\not\in \{n_1,n_2,\cdots\} \\\tilde h_{n_t}+h_{l_t} \, &\text{if
  } n=n_t \text{ for some } t\end{cases},
\end{equation}
\begin{align}\label{condition0e}
e_n=\lfloor(\frac{(w_n)^\tau}{e_0e_1\cdots e_{n-1}})^{\frac{1}{1-\tau}}\rfloor
\end{align}
and
\begin{align}\label{condition0h}
 h_{n+1}=w_ne_{n}.
\end{align}
Since $r_n\rightarrow +\infty$, it is clear from \eqref{condition0hw}, \eqref{condition0e} and \eqref{condition0h},
\begin{align}\label{condition11}
  \lim \limits_{n\rightarrow +\infty} \frac{w_n}{\tilde h_n}=1.
\end{align}
By \eqref{condition0e}, we have
\begin{align}\label{conditione-r0}
e_{1}&=\lfloor(\frac{(w_1)^\tau}{e_0})^{\frac{1}{1-\tau}}\rfloor=\lfloor(\frac{(h_1r_1)^\tau}{e_0})^{\frac{1}{1-\tau}}\rfloor
=\lfloor(\frac{(2C_{\tau})^\tau}{2})^{\frac{1}{1-\tau}}\rfloor=\lfloor \frac{C_\tau^{\frac{\tau}{1-\tau}}}{2} \rfloor \ge 2
\end{align}
and for $n\ge 2$,
\begin{align}\label{conditione-r1}
e_{n}&\ge \lfloor(\frac{(w_{n-1}e_{n-1}r_{n})^\tau}{e_0e_1\cdots e_{n-1}})^{\frac{1}{1-\tau}}\rfloor
=\lfloor(\frac{(w_{n-1}e_{n-1})^\tau}{e_0e_1\cdots e_{n-1}})^{\frac{1}{1-\tau}} \cdot (r_{n})^{\frac{\tau}{1-\tau}}\rfloor \nonumber\\
&=\lfloor \frac{(\frac{(w_{n-1})^\tau}{e_0e_1\cdots e_{n-2}})^{\frac{1}{1-\tau}}}{ e_{n-1}}\cdot (r_{n})^{\frac{\tau}{1-\tau}}\rfloor
 \ge\lfloor (r_{n})^{\frac{\tau}{1-\tau}}\rfloor.
\end{align}
Hence we have $\lim \limits_{n\rightarrow +\infty}e_n=+\infty$ and by the definition of $e_n$'s,
\begin{align}\label{condition1}
\lim_{n\rightarrow+\infty}\frac{e_0e_1\cdots
e_n}{(w_n e_n)^{\tau}}=1.
\end{align}
Now by \eqref{condition11} and \eqref{condition1}, we have
\begin{align}\label{condition2}
e_n&=\lfloor(\frac{(w_n)^\tau}{e_0e_1\cdots e_{n-1}})^{\frac{1}{1-\tau}}\rfloor=\lfloor (\frac{w_n}{\tilde h_n})^{\frac{\tau}{1-\tau}}\cdot (\frac{(w_{n-1}e_{n-1}r_{n})^\tau}{e_0e_1\cdots e_{n-1}})^{\frac{1}{1-\tau}}\rfloor \nonumber\\
&=\lfloor (\frac{w_n}{\tilde h_n})^{\frac{\tau}{1-\tau}}\cdot \frac{(\frac{(w_{n-1})^\tau}{e_0e_1\cdots e_{n-2}})^{\frac{1}{1-\tau}}}{ e_{n-1}}\cdot (r_{n})^{\frac{\tau}{1-\tau}}\rfloor \\
&\sim (r_n)^{\frac{\tau}{1-\tau}}.\nonumber
\end{align}
From \eqref{conditione-r0}, \eqref{conditione-r1} and the setting $r_n=C_\tau n^2$, we have $e_{n}\ge 2$ for every $n\ge 1$. Together with \eqref{condition0e}, we deduce that
\begin{align}\label{condition3}
w_n\ge (e_0e_1\cdots e_{n-1})^\frac{1}{\tau}\ge 2^{\frac{n}{\tau}}.
\end{align}
Note that from \eqref{condition2} and the setting $r_n=C_\tau n^2$, both $e_n$ and $r_n$ have polynomial growth rate on $n$.
Hence for any $\epsilon>0$, we have that
\begin{align}\label{condition4}
\lim_{n\rightarrow+\infty}\frac{(w_n)^{\epsilon}}{e_n}=\lim_{n\rightarrow+\infty}\frac{(w_n)^{\epsilon}}{(r_n)^{\frac{\tau}{1-\tau}}}=+\infty.
\end{align}
\subsection{The construction.}\ \

Let $W_0=\tilde{W}_0=\{P_0,P_1\}$ be the $0$-th and $\tilde 0$-th step tower. We note here that $W_0$ and $\tilde{W}_0$ do not contain a subset of $P_s$.
The construction consists of a sequence of steps, step $n$ and step $\tilde n$, $n\in\N$. The steps occur in the following order:
Step 1, Step $\tilde 1$, Step 2, Step $\tilde 2$, $\cdots$, Step $n$, Step $\tilde n$, $\cdots$.

At step 1, we do $e_0-$many independent cutting and stacking of $\tilde{W}_0$ to construct the first tower $W_1$ of height $h_1=e_0$, i.e.,
$W_1=Ind(\tilde{W}_0,e_0)$. We have $2^{e_0}-$many columns of all possible sequences of $0$'s and $1$'s as their names of equal width and height in $W_1$.
Suppose after step $n$ we have obtained the tower $W_n$ of height $h_n$. Then at step $\tilde{n}$,
we do $r_n-$many repetitive cutting and stacking of $W_n$,
i.e. if we denote the tower after this step by $\tilde {W_n}$, then $\tilde {W_n}=Rep(W_n,r_n)$. This step could not increase the complexity too much.
At step $(n+1)$, if $n\not \in \{n_1,n_2,\cdots\}$, we do $e_n-$many independent cutting and stacking of $\tilde{W}_n$,
i.e. $W_{n+1}=Ind(\tilde{W_n},e_n)$. If $n= n_t$ for some $t\ge 1$,
we insert the spacers while doing $e_n$-many independent cutting and stacking between the $\tilde W_{n_t}$-segments as mentioned in section 5.1 (with parameter $h^*=h_{l_t}$),
 i.e., we let  $W_{n_t+1}=Inds(\tilde W_{n_t},e_{n_t},h_{l_t})$. Since we need to show that any non-trivial partition $P=\{A,A^c\}$ has the same entropy dimension,
 we need to be careful not to generate some kind of ``rotation'' factor by level sets. This step makes a level set of a previous step spread out ``almost uniformly''
 to the level sets of future steps.

Then we get an invertible MDS and denote it by
$(X,\mathcal{B},\mu,T)$, where $\mu$ is the Lebesgue measure on $X$, $\mathcal{B}$ is the $\sigma-$algebra
of $X$ generated by the level sets of the sequence of towers $W_{n}$ and $T$ is the associated map.

\subsection{List of the parameters and notations}\ \

We remind the following parameters and notations.
\begin{itemize}
  \item $e_n$--- we do $e_n-$many independent cutting and stacking at Step $n+1$ if $n\notin \{n_1,n_2,\cdots\}$; we insert spacers while
  doing $e_n-$many independent cutting and stacking at Step $n+1$ if $n\in \{n_1,n_2,\cdots\}$.
  \item $r_n$--- we do $r_n-$many repetitive cutting and stacking at Step $\tilde n$.
  \item $W_n$ and $\tilde W_n$--- towers after Step $n$ and Step $\tilde n$, respectively.
  \item $W_n$-segments and $\tilde W_n$-segments--- subcolumns of columns of the towers $W_n$ and $\tilde W_n$ respectively, when seeing from towers in further steps.
  If $n=n_t$ for some $t$, a $\tilde W_n$-segment $S$ together with the adding spacers, which has the form $s^{\ell}Ss^{h_{\ell_t}-\ell}$, is also called a $\tilde W_n$-segment.
  \item $h_n$ and $\tilde h_n$--- height of columns of the tower $W_n$ and $\tilde W_n$, respectively.
  \item $w_n$--- height of $\tilde W_n$-segments when seeing from towers in further steps (after Step $\tilde n$). If $n\neq n_t$ for any $t$, then $w_n=\tilde h_n$,
   if $n=n_t$ for some $t$, then $w_n=\tilde h_{n_t}+h_{l_t}$.
  \item $n_t,l_t$--- at step $n_t+1$ for $t\ge 1$, we add spacers while we do independent cutting and stacking,
  $h_{l_t}$ is the parameter related with the number of the spacers.
  \item $c_n$ ($=\# W_n=\#\tilde W_n$)--- the total number of the columns of $W_n$ or $\tilde W_n$.
  \item $\xi_n$ ($=\mu(|W_n|)=\mu(|\tilde W_n|)$)--- the total Lebesgue measure of the level sets in the tower $W_n$ or $\tilde{W}_n$.
  Recall that $\xi$ is the total length of the intervals $P_0$ and $P_1$, i.e. $\xi=\mu(|W_0|)$. In the following we will determine $\xi$ to make $\lim\limits_{n\rightarrow +\infty}{\xi}_{n}=1$. Since at each step $(n_t+1)$ we add spacers of measure $\xi_{n_t}\cdot \frac{h_{l_t}}{\tilde h_{n_t}}$,
the measures $\xi_n$'s of the tower $W_n$'s satisfy the following,
\begin{align}\label{equation-measure}
  &{\xi}_1={\xi}_2=\cdots={\xi}_{n_1}={\xi}, \nonumber \\
  &{\xi}_{n_t+1}={\xi}_{n_t}\frac{w_{n_t}}{\tilde h_{n_t}}={\xi}_{n_t}(1+\frac{h_{l_t}}{\tilde h_{n_t}}), \\
  &{\xi}_{n_t+1}={\xi}_{n_{t}+2}=\cdots={\xi}_{n_{t+1}}, t\ge 1. \nonumber
\end{align}
\end{itemize}
Due to the choice of $r_n$, $\sum_{t=1}^{+\infty}\frac{1}{r_{n_t}}$ converges. So $\sum_{t=1}^{+\infty}\frac{h_{l_t}}{\tilde h_{n_t}}<\sum_{t=1}^{+\infty}\frac{1}{r_{n_t}}$ converges.
Let
\begin{align}\label{xi}
  \xi=\prod_{t=1}^{+\infty}(1+\frac{h_{l_t}}{\tilde h_{n_t}})^{-1}.
\end{align}
Then we have $0<\xi<1$ and $\lim\limits_{n\rightarrow +\infty}{\xi}_{n}=1$.

We note that $\mu(X)=\lim\limits_{n\rightarrow +\infty}{\xi}_{n}=1$. And it is not hard to see from the construction that $\mu$ is $T$-invariant. We will show that
$\mu$ is in fact ergodic later in Remark \ref{ergodicity}.

\subsection{The upper bound of the entropy dimension.}\ \

For convenience, for a finite collection $\mathcal{A}$ consisting of measurable sets in $\mathcal{B}$ (need not to be a partition),
we denote $$H_{\mu}(\mathcal{A})=\sum_{A\in \mathcal{A}}-\mu(A)\log\mu(A)\text{ and } N_\mu (\mathcal{A})=\# \{ A\in \mathcal{A}: \mu(A)>0\}$$
For $\beta \in \mathcal{P}_X$ and $U\subseteq X$, denote by
$$\beta\cap U=\{B\cap U: B\in \beta\}.$$
Let $n, K\in \N$. We define
\begin{align*}
  U_{K,n}=\begin{cases}
|W_K|\setminus \Big(\bigcup_{i=1}^{n}T^{h_K-i}\big(base(W_K)\big)\Big), &\text { if }h_K>n;\\
\emptyset, &\text{ if }h_K\le n.
  \end{cases}
\end{align*}
Here we recall that $h_K$ is the heights of the tower $W_K$. Then we have the following
estimation.

\begin{lemma}\label{name-estimate} Given $k\in\N$, let $E$ be a level set of a column in $W_k$ and let $\alpha=\{E,X\setminus E\}$.
Then for any $\epsilon>0$, there exists a constant $M=M(\epsilon)>0$ such that when $n$ is sufficiently large,
\begin{align}\label{name}
 N_\mu(\bigvee_{i=0}^{n-1}T^{-i}\alpha\bigcap U_{K,n})\le (n^3+2n^2+n)2^{Mn^{\tau+\epsilon}}
\end{align}
for any $K\in \mathbb{N}$.
\end{lemma}
\begin{proof}  Firstly, we are to define $\mathcal C(n,K)$ for given $n,K\in \mathbb{N}$.
Let $n,K\in \mathbb{N}$. There are two cases. The first case is $h_K\le n$. In this case  we put $\mathcal{C}(n,K)=0$ and then
$$N_\mu(\bigvee_{i=0}^{n-1}T^{-i}\alpha\bigcap U_{K,n})\le \mathcal{C}(n,K)$$
since $U_{K,n}=\emptyset$.

The second case is $h_K> n$. In this case, for each column $C=\{B_1,B_2,\cdots,B_{h_K}\}$
of $W_K$, we can associate $C$ an $\alpha$-name $b=b_1b_2\cdots b_{h_K}\in \{u,v\}^{h_K}$ by
\begin{align*}
  b_i=
  \begin{cases}
u, &\text{ if }B_i\subset E;\\
v, &\text{ if }B_i\subset (X\setminus E).
  \end{cases}
\end{align*}
Let $\tilde E\subset U_{K,n}$ be a level set of a column in the tower $W_K$ and let $d=d_0d_1\cdots d_{n-1}\in\{u,v\}^n$ be the $\alpha$-name of the segment
$S=\{\tilde E,T\tilde E,T^2\tilde E,\cdots,T^{n-1}\tilde E\}$ (inherited from the $\alpha$-name of the column of $W_K$ that contains $S$).
Note that for each $0\le i\le n-1$,
\begin{align*}
d_i=
  \begin{cases}
u, &\text{ if }T^i\tilde E\subset E;\\
v, &\text{ if }T^i\tilde E\subset (X\setminus E).
  \end{cases}
\end{align*}
In fact $d$ is a subword of length $n$ of $\alpha$-names of $W_K$-segments.

Denote by
$$\mathcal{C}(n,K)=\#\{d\in \{u,v\}^{n}: d \text{ is a subword of }\alpha\text{-names of }W_K \text{-segments}\}.$$
Since any element in the collection $\bigvee_{i=0}^{n-1}T^{-i}\alpha\bigcap U_{K,n}$ is a union of some level sets in $W_K$  ($\text{mod } \mu$),
we have that
\begin{align}\label{eq-cnk}
N_\mu(\bigvee_{i=0}^{n-1}T^{-i}\alpha\bigcap U_{K,n})\le \mathcal C(n,K).
\end{align}

In the following we will show that for any $\epsilon>0$, there exists a constant $M=M(\epsilon)>0$ such that when $n$ is sufficiently large,
$$\mathcal C(n,K)\le (n^3+2n^2+n)2^{Mn^{\tau+\epsilon}}$$
for any $K\in \mathbb{N}$. Thus combining this fact with \eqref{eq-cnk}, one has \eqref{name}.

Recall that $c_n=\# W_n=\#\tilde W_n$ is the total number of the columns of the tower $W_n$ or $\tilde W_n$. From our construction,
  \begin{align}\label{cn}
    c_{n+1}=\begin{cases}
(c_n)^{e_n}, &\text{ if }n\notin \{n_1,n_2,\cdots \};\\
(c_n)^{e_n+1}, &\text{ if }n\in \{n_1,n_2,\cdots \}.
    \end{cases}
  \end{align}
  Hence $c_n$ has the expression
  \begin{align}\label{cn2}
    c_n&=2^{\big (\prod\limits_{0\le i\le n-1, i\notin \{n_1,n_2,\cdots\}}e_i\big)\cdot\big(\prod\limits_{0\le i\le n-1, i\in \{n_1,n_2,\cdots\}}(e_i+1)\big)}\nonumber\\
    &=2^{\big(\prod_{i=0}^{n-1}e_i\big )\cdot\big(\prod\limits_{0\le i\le n-1, i\in \{n_1,n_2,\cdots\}}(1+\frac{1}{e_i})\big)}.
  \end{align}
\medskip
Given $n\gg h_k$ and $K\in \mathbb{N}$. We consider two cases separately.

\medskip
\noindent{\bf Case I.} {\it Suppose $\tilde h_\ell\le n<h_{\ell+1}$ for some $\ell\in \N$. }

\medskip
If $h_K\le n$, then $\mathcal{C}(n,K)=0$. Now we assume $h_K>n$. Then $K\ge \ell+1$.
Let $$S=\{\tilde E,T\tilde E,T^2\tilde E,\cdots,T^{n-1}\tilde E\}$$
be any segment of height $n$ of a column in $W_K$ and $d$ be the $\alpha$-name of $S$.
By our construction, any column of $W_K$ is stacked by a sequence of
$W_{\ell+1}$-segments. According to the positions of $S$, there are two subcases.

\medskip
{\it Case (I.1).  $S$ is completely contained in some $W_{\ell+1}$-segment.}

\medskip
In this case $S$ has the form $S=pS_1S_2\cdots S_mq$,
where $S_i$ is some $\tilde W_{\ell}$-segment for each $1\le i \le m$, $p$ is an ending part of some $\tilde W_{\ell}$-segment $S_0$,
$q$ is a beginning part of some $\tilde W_{\ell}$-segment $S_{m+1}$ and $m=\lfloor\frac{n}{w_{\ell}}\rfloor$ or $\lfloor\frac{n}{w_{\ell}}\rfloor-1$. We should note that
the $\tilde W_{\ell}$-segments here may contain the inserted spacers if $\ell=n_t$ for some $t$.
The segment $S$ in this case is determined by $S_0, S_1, \cdots, S_{m+1}$ and the length of $p$.

For each $0\le i \le m+1$,
if $\ell\neq n_t+1$ for every $t$, then there are no more than $c_{\ell}=(c_{\ell-1})^{e_{\ell-1}}$-many choices for the $\tilde W_{\ell}$-segment $S_i$; if $\ell=n_t+1$ for some $t$, there are no more than $c_{\ell}=(c_{\ell-1})^{e_{\ell-1}+1}$-many choices for $S_i$. $m\le\lfloor\frac{n}{w_{\ell}}\rfloor\le \lfloor\frac{h_{\ell+1}}{w_{\ell}}\rfloor=e_{\ell}$.
There are $w_{\ell}$-many choices for the length of $p$, which is no more than $n$.
Then the total number of the $\alpha$-names of such $S$'s is bounded by
$$n\big ((c_{\ell-1})^{e_{\ell-1}+1}\big )^{e_{\ell}+2}.$$

{\it Case (I.2). $S$ is not completely contained in any $W_{\ell+1}$-segment. }

\medskip
Then there are two subcases.
\begin{enumerate}
  \item [(I.2.a).] $S$ has overlaps with two $W_{t}$-segments for
  some $t\ge \ell+1$.
  We can finally deduce that $S$ has overlaps with two $W_{\ell+1}$-segments (there may exist spacers of later step between them).
  Then $S$ has the form
  $S=S_0s^rS_1$, where $S_0$ is an ending part of some $W_{\ell+1}$-segment, $S_1$ is a beginning part of some $W_{\ell+1}$-segment
  and $s^r$ is $r$-many spacers between the two $W_{\ell+1}$-segments. $S$ is determined by $S_0S_1$, $r$ and the height of $S_0$. By a similar discussion as in Case (I.1),
  the number of the $\alpha$-names of such $S_0S_1$ is bounded by $n\big ((c_{\ell-1})^{e_{\ell-1}+1}\big )^{e_{\ell}+2}$. And both $r$ and the height of $S_0$ have no more than $n$ choices.
  Hence the total number of $d$'s in this subcase is bounded by $n^3\big ((c_{\ell-1})^{e_{\ell-1}+1}\big )^{e_{\ell}+2}$.

  \item [(I.2.b).] $S$ begins with some spacers and then followed by a beginning part of some $W_{\ell+1}$-segment or
  $S$ begins with an ending part of some $W_{\ell+1}$-segment and then followed by some spacers. Then $S$ has the form $S=s^rS'$ or $S=S's^r$, where $S'$
  is a segment which is completely contained in some $W_{\ell+1}$-segment.
  By a similar discussion as in Case (I.1), the $\alpha$-name of $S'$ has no more than $n\big ((c_{\ell-1})^{e_{\ell-1}+1}\big )^{e_{\ell}+2}$-many choices and
  $r$ has no more than $n$ choices.
  The total number of $d$'s in this subcase is bounded by $2n^2\big ((c_{\ell-1})^{e_{\ell-1}+1}\big )^{e_{\ell}+2}$.
\end{enumerate}

Summing the above estimations up, we have $$\mathcal C(n,K)\le (n^3+2n^2+n)\big ((c_{\ell-1})^{e_{\ell-1}+1}\big )^{e_{\ell}+2}.$$

Next by \eqref{cn2}, we have
\begin{align*}
  \big ((c_{\ell-1})^{e_{\ell-1}+1}\big )^{e_{\ell}+2}=2^{\bigg (\prod_{i=0}^{\ell}e_i\bigg )\cdot\bigg (\prod\limits_{0\le i\le \ell-2, i\in \{n_1,n_2,\cdots\}}(1+\frac{1}{e_i})\bigg )\cdot (1+\frac{1}{e_{\ell-1}})\cdot(1+\frac{2}{e_{\ell}})}.
\end{align*}
By \eqref{nt} and \eqref{condition2}, the product $\prod\limits_{i\in \{n_1,n_2,\cdots\}}(1+\frac{1}{e_i})$ is bounded.
By the definition of $w_{\ell}$ (see \eqref{condition0hw}), we have $w_{\ell}\le 2 \tilde h_{\ell}\le 2n$.
Hence
\begin{align}\label{prod}
  \prod_{i=0}^{\ell}e_i&\le (w_{\ell})^{\tau}\cdot e_{\ell} \text{ (by \eqref{condition3})}\nonumber \\
  &< 2n^{\tau}\cdot M'(r_{\ell})^{\frac{\tau}{1-\tau}}\text { (by \eqref{condition2})},
\end{align}
where $M'>0$ is a constant independent on $n$.
By \eqref{condition4}, for any given $\epsilon>0$, when $n$ is sufficiently large
(hence so is $\ell$),
$$(r_{\ell})^{\frac{\tau}{1-\tau}}\le (w_{\ell})^\epsilon\le 2n^{\epsilon}.$$
Hence for any $\epsilon>0$ we can find a constant $M=M(\epsilon)>0$, such that
$$\mathcal C(n,K)\le (n^3+2n^2+n)2^{Mn^{\tau+\epsilon}}$$
when $n$ is sufficiently large.

\medskip
\noindent{\bf Case II.} Suppose $h_\ell\le n<\tilde h_{\ell}$ for some $\ell\in \N$.

\medskip
Similar as in Case I, we assume $h_K>n$ and then $K\ge \ell+1$.
Let $$S=\{\tilde E,T\tilde E,T^2\tilde E,\cdots,T^{n-1}\tilde E\}$$
be any segment of height $n$ of a column in $W_K$ and $d$ be the $\alpha$-name of $S$.
By our construction, in this case, any column of $W_K$ is stacked by a sequence of
$\tilde W_{\ell}$-segments. Similar as in Case I, according to the positions of $S$, there are two subcases.

\medskip
{\it Case (II.1). $S$ is completely contained in some $\tilde W_{\ell}$-segment.}

 \medskip
Since $\tilde W_{\ell}$ is obtained by the repetitions of columns of $W_{\ell}$, in this case $S$ has the form $S=pS_0S_0\cdots S_0q$,
where $S_0$ is some $W_{\ell}$-segment, $p$ is an ending part of $S_0$ and
$q$ is a beginning part of $S_0$.
The segment $S$ in this case is determined by $S_0$ and the height of $p$.
There are $c_{\ell}$-many choices for $S_0$ and at most $n$ choices for the height of $p$.
Then the total number of the $\alpha$-names of such $S$'s is no more than $nc_{\ell}$.

\medskip
{\it Case (II.2). $S$ is not completely contained in any $\tilde W_{\ell}$-segment.}

 \medskip
Similar as in Case (I.2), there are again
two subcases.
\begin{enumerate}

  \item [(II.2.a).] $S$ has overlaps with two $\tilde W_{t}$-segments for
  some $t\ge \ell$.
  We can finally deduce that $S$ has overlaps with two $\tilde W_{\ell}$-segments (there may exist spacers of later step between them).
  Then $S$ has the form
  $S=S_0s^rS_1$, where $S_0$ is an ending part of some $\tilde W_{\ell}$-segment, $S_1$ is a beginning part of some $\tilde W_{\ell}$-segment
  and $s^r$ is $r$-many spacers between the two $\tilde W_{\ell}$-segments. $S$ is determined by $S_0$, $S_1$ and $r$.
  There are no more than $nc_{\ell}$-many choices for $S_0$ and $S_1$ and no more than $n$-many choices for $r$.
  Hence the total number of $d$'s in this subcase is bounded by $n^3(c_{\ell})^2$.

  \item [(II.2.b).] $S$ begins with some spacers and then followed by a beginning part of some $\tilde W_{\ell}$-segment or
  $S$ begins with an ending part of some $\tilde W_{\ell}$-segment and then followed by some spacers. Then $S$ has the form
  $S=s^rS'$ or $S=S's^r$, where $S'$ is a segment which is completely contained in some $\tilde W_{\ell}$-segment.
  By a similar discussion as in Case (II.1), the $\alpha$-name of $S'$ has no more than $nc_{\ell}$-many choices.
  $r$ has no more than $n$ choices.
  The total number of $d$'s in this subcase is bounded by $2n^2c_{\ell}$.
\end{enumerate}

Summing the above estimations up, we have in this case
$$\mathcal{C}(n,K)\le (n^3+2n^2+n)(c_{\ell})^2.$$

By \eqref{cn2}, we have
\begin{align*}
  c_{\ell}=2^{\bigg(\prod_{i=0}^{\ell-1}e_i\bigg)\cdot\bigg(\prod\limits_{0\le i\le \ell-1, i\in \{n_1,n_2,\cdots\}}(1+\frac{1}{e_i})\bigg)}.
\end{align*}
Noticing that in this situation, $w_{\ell-1}\le 2 \tilde h_{\ell-1}<2h_{\ell}\le 2n$,
we have
\begin{align*}
  \prod_{i=0}^{\ell-1}e_i&\le (w_{\ell-1})^{\tau}\cdot e_{\ell-1} \text{ (by \eqref{condition3})}\\
  &< 2n^{\tau}\cdot M'(r_{\ell-1})^{\frac{\tau}{1-\tau}}\text { (by \eqref{condition2})},
\end{align*}
where $M'>0$ is the same constant as appeared in \eqref{prod}.
By \eqref{condition4} again, for any given $\epsilon>0$, when $n$ is sufficiently large
(hence so is $\ell$),
$$(r_{\ell-1})^{\frac{\tau}{1-\tau}}\le (w_{\ell-1})^\epsilon\le 2n^{\epsilon}.$$
Similar as shown in Case I, for any $\epsilon>0$ we can find a constant $M=M(\epsilon)>0$, which depends on $\epsilon$ but is independent on $n$, such that
$$\mathcal{C}(n,K)\le (n^3+2n^2+n)2^{Mn^{\tau+\epsilon}}$$
when $n$ is sufficiently large. This finishes the proof of the lemma.
\end{proof}

With the help of Lemma \ref{name-estimate}, we are able to show that $\overline{D}_\mu(X,T)\le \tau$ as the following Lemma \ref{prop-upper}.
Hence for any partition $\beta=\{ B,X\setminus B\}\in \cal P^2_X$ with $0<\mu(B)<1$, we have $\overline{D}_\mu(T,\beta)\le \overline{D}_\mu(X,T)\le \tau$.
\begin{lemma}\label{prop-upper}
$\overline{D}_\mu(X,T)\le \tau$.
\end{lemma}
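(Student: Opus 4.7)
The plan is to reduce bounding $\overline{D}_\mu(X,T)$ to bounding $\overline{D}_\mu(T,\zeta)$ for the base partition $\zeta:=\{P_0,P_1,P_s\}$, to control the entropy along an arbitrary entropy generating sequence by a combinatorial count of column-types in the tower $W_{N+1}$, and then to invoke the growth relation \eqref{condition1}.

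First, I would argue that $\zeta$ is a two-sided generator: for $x\in|W_N|$, a sufficiently long two-sided $\zeta$-name identifies both the $W_N$-column containing $x$ and the position of $x$ inside it, while the column widths $w_N$ tend to $0$. Consequently, if $\alpha_N:=\bigvee_{i=-h_N}^{h_N-1}T^{-i}\zeta$, then $\alpha_N\nearrow\mathcal{B}\,(\bmod\,\mu)$. By Proposition \ref{prop-basic}(2), applied both to $T$ and $T^{-1}$ via Proposition \ref{pro-D-ite}(2), one has $\overline{D}_\mu(T,\alpha_N)=\overline{D}_\mu(T,\zeta)$ for every $N$, and Theorem \ref{thm-mart}(2) then yields $\overline{D}_\mu(X,T)=\overline{D}_\mu(T,\zeta)$. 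Hence it suffices to prove $\overline{D}_\mu(T,\zeta)\le\tau$.

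Next, fix $S=\{s_1<s_2<\cdots\}\in\mathcal{E}_\mu(T,\zeta)$ with $c:=\liminf_n\frac{1}{n}H_\mu(\bigvee_{i=1}^n T^{-s_i}\zeta)>0$. The inclusion $\{s_1,\dots,s_n\}\subseteq\{1,\dots,s_n\}$ together with $T$-invariance gives $H_\mu(\bigvee_{i=1}^n T^{-s_i}\zeta)\le H_\mu(\bigvee_{i=0}^{s_n-1}T^{-i}\zeta)$. Pick the unique $N=N(n)$ with $h_N\le s_n<h_{N+1}$. Restricted to $|W_{N+1}|$, each atom of $\bigvee_{i=0}^{s_n-1}T^{-i}\zeta$ is determined by the pair (column of $W_{N+1}$, position in that column), so there are at most $c_{N+1}\,h_{N+1}$ such atoms. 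Conditioning on $\{|W_{N+1}|,|W_{N+1}|^c\}$ and bounding the complement by the trivial count $3^{s_n}$ yields
\begin{align*}
H_\mu\Bigl(\bigvee_{i=1}^n T^{-s_i}\zeta\Bigr)\le \log(c_{N+1}h_{N+1})+(1-\xi_{N+1})\,s_n\log 3+\log 2.
\end{align*}

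Finally, induction along the operations $Ind$, $Rep$ and $Ins$ gives $\log c_{N+1}=\log 2\cdot(e_0\cdots e_N)\cdot K_N$ with $K_N:=\prod_{t:n_t\le N}(1+1/e_{n_t})$ at most polynomial in $N$ (since $e_n\to\infty$ by \eqref{condition2}); combined with \eqref{condition1}, this gives $\log c_{N+1}\le K_N\,h_{N+1}^\tau$ and $\log h_{N+1}=o(h_{N+1}^\tau)$. The summability $\sum_t h_{l_t}/\tilde{h}_{n_t}\le\sum_t 1/r_{n_t}<\infty$ (via $r_n=C_\tau n^2$), together with a sufficiently fast choice of $\{n_t\}$, forces $1-\xi_N=O(h_N^{-a})$ for some $a>0$, making $(1-\xi_{N+1})\,s_n=o(s_n^\tau)$. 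For any $\tau'>\tau$ and large $n$ we thus obtain
\begin{align*}
\frac{n}{s_n^{\tau'}}\le \frac{K_N}{c}\Bigl(\frac{h_{N+1}}{h_N}\Bigr)^{\tau}h_N^{\tau-\tau'}+o(s_n^{\tau-\tau'})\longrightarrow 0,
\end{align*}
because $K_N$ and $h_{N+1}/h_N\sim r_Ne_N$ are polynomial in $N$ while $h_N$ grows super-polynomially; consequently $\overline{D}(S)\le\tau$. The main obstacle will be controlling the boundary term $(1-\xi_{N+1})\,s_n$: this hinges both on the specific quadratic choice $r_n=C_\tau n^2$ and on $\{n_t\}$ being sparse enough that $(1-\xi_N)h_N^{1-\tau'}\to 0$ for every $\tau'>\tau$.
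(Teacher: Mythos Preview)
Your reduction to the generating partition $\zeta=\{P_0,P_1,P_s\}$ is sound and parallels the paper's use of level-set partitions together with Theorem~\ref{thm-mart}. The genuine gap is in the entropy estimate.

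Because you tie the tower index to the window via $h_N\le s_n<h_{N+1}$, you must control the boundary term $(1-\xi_{N+1})\,s_n^{1-\tau'}$. But the construction allows an \emph{arbitrary} sequence $1<l_1<n_1<l_2<n_2<\cdots$; taking for instance $l_t=2t-1$, $n_t=2t$ gives $h_{l_t}/\tilde h_{n_t}=1/(r_{2t-1}e_{2t-1}r_{2t})$, so $1-\xi_N$ decays only like a negative power of $N$, whereas $s_n\ge h_N$ grows super-exponentially in $N$. Hence $(1-\xi_{N+1})\,s_n^{1-\tau'}\to\infty$ and your bound is vacuous. The proposed remedy of choosing $\{n_t\}$ ``sparse enough'' is not available, since the lemma is asserted for every admissible choice of parameters. (There is also a smaller defect: for $x$ in the top $s_n$ levels of a $W_{N+1}$-column the orbit $x,Tx,\dots,T^{s_n-1}x$ leaves that column, so the atom of $\bigvee_{i=0}^{s_n-1}T^{-i}\zeta$ through $x$ is \emph{not} determined by column and position in $W_{N+1}$; but repairing this by enlarging the bad set only worsens the main problem.)

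The paper avoids all of this by decoupling the tower level from the window length: for each fixed $n$ it takes $K\gg n$ so large that $\mu(X\setminus U_K)\cdot n\log 2<1$, making the boundary contribution uniformly $\le 1$. The price is that one can no longer use the crude level-set count; instead one bounds the number of distinct $(n,\alpha)$-\emph{names} visible in $U_K$ by a quantity $p(n)\,2^{n^\tau}$ that does not depend on $K$. That combinatorial block-count (the Claim in the paper's proof, driven by \eqref{condition1} and the tower structure) is exactly the step your argument tries to replace by $c_{N+1}h_{N+1}$, and it is what makes the limit $K\to\infty$ available.
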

\begin{proof} Since
\begin{equation*}
\bigvee_{k=1}^{+\infty} \Big(\bigvee_{E\text{ is a level set of }W_k}\{E,X\setminus E\}\Big)=\mathcal{B}\ \ (\text{mod}\, \mu),
\end{equation*}
by (3) of Theorem \ref{thm-mart},
$$\overline{D}_\mu(X,T)=\sup_{E\text{ is a level set of some }W_k}\overline{D}_\mu(T,\{E,X\setminus E\}).$$
Hence it is sufficient to show that $\overline{D}_\mu(T,\alpha)\le \tau$
for any $\alpha=\{E,X\setminus E\}$, where $E$ is a level set of $W_k$ for some $k\in \mathbb{N}$.

Given $k\in\N$, let $E$ be a level set in $W_k$ and let $\alpha=\{E,X\setminus E\}$.
In the following we are to show that $\underline{D}(S)\le \tau$
for any $S\in \mathcal{P}_\mu(T,\alpha)$, which implies $\overline{D}_\mu(T,\alpha)\le \tau$ by Definition \ref{de-s-3}.

If this is not true, then we can find  $S=\{s_1<s_2<\cdots\}\in \mathcal{P}_\mu(T,\alpha)$  and $\epsilon>0$ such that $\underline{D}(S)>\tau+\epsilon$.
It is clear that $\liminf\limits_{n\rightarrow+\infty}\frac{n}{(s_n)^{\tau+\epsilon}}=+\infty$.

By Lemma  \ref{name-estimate},  there exists a constant $M=M(\epsilon)>0$  and $N_\epsilon\in \mathbb{N}$
such that
\begin{align}\label{name-1}
 N_\mu(\bigvee_{i=0}^{n-1}T^{-i}\alpha\bigcap U_{K,n})\le (n^3+2n^2+n)2^{Mn^{\tau+\epsilon}}
\end{align}
for any $K\in \mathbb{N}$ and $n\ge N_\epsilon$.

Now for each $n\ge N_\epsilon$, since $\mu(U_{K,n})=\xi_K\cdot(1-\frac{n}{h_K})$ when $h_K>n$, we have
$$\mu(X\setminus U_{K,n})=(1-\xi_K)+\frac{n}{h_K}\xi_K.$$
Hence we can choose $K=K(n)$ sufficiently large to satisfy that
$$\mu(X\setminus U_n)\le \frac{1}{2n\log 2}$$
and
$$-\mu(U_n)\log\mu(U_n)-\mu(X\setminus U_n)\log \mu(X\setminus U_n)\le\frac{1}{2},$$
where for simplicity we write $U_n$ as $U_{K(n),n}$.

Together with the fact $\#(\bigvee_{i=0}^{n-1}T^{-i}\alpha)\le 2^n$, we have
\begin{align*}
  \mu(X\setminus U_n)\log \#(\bigvee_{i=0}^{n-1}T^{-i}\alpha)-\mu(U_n)\log\mu(U_n)-\mu(X\setminus U_n)\log \mu(X\setminus U_n)\le 1.
\end{align*}
Hence when $n\ge N_\epsilon$,
\begin{align*}
 &H_{\mu}(\bigvee_{i=0}^{n-1}T^{-i}\alpha)\nonumber \\
 \le& H_{\mu}\Big(\bigvee_{i=0}^{n-1}T^{-i}\alpha\bigvee\{U_n,X\setminus U_n\}\Big) \nonumber \\
 =&H_{\mu}\Big(\bigvee_{i=0}^{n-1}T^{-i}\alpha\bigcap U_n\Big)+H_{\mu}\Big(\bigvee_{i=0}^{n-1}T^{-i}\alpha\bigcap (X\setminus U_n)\Big) \nonumber \\
 \le&-\mu(U_n)\log \frac{\mu(U_n)}{N_\mu(\bigvee_{i=0}^{n-1}T^{-i}\alpha\bigcap U_n)}
 -\mu(X\setminus U_n)\log \frac{\mu(X\setminus U_n)}{N_\mu(\bigvee_{i=0}^{n-1}T^{-i}\alpha)} \nonumber \\
 =&\mu(U_n)\log N_\mu(\bigvee_{i=0}^{n-1}T^{-i}\alpha\bigcap U_n)+\mu(X\setminus U_n)\log \#(\bigvee_{i=0}^{n-1}T^{-i}\alpha) \nonumber \\
 &\ \ \ \ \ \ -\mu(U_n)\log\mu(U_n)-\mu(X\setminus U_n)\log \mu(X\setminus U_n) \nonumber \\
 \le& \mu(U_n)\log N_\mu(\bigvee_{i=0}^{n-1}T^{-i}\alpha\bigcap U_n)+1\nonumber \\
 \le& \log \Big((n^3+2n^2+n)2^{Mn^{\tau+\epsilon}}\Big)+1,
\end{align*}
where the last inequality comes from \eqref{name-1}.

Now using the above estimation we have
\begin{align*}
 h^S_{\mu}(T,\alpha)&=\limsup_{n\rightarrow+\infty}\frac{1}{n}H_{\mu}(\bigvee_{i=1}^nT^{-s_i}\alpha)\\
 &\le \limsup_{n\rightarrow+\infty}\frac{1}{n}H_{\mu}(\bigvee_{i=0}^{s_n}T^{-i}\alpha) \\
 &\le \limsup_{n\rightarrow+\infty}\frac{1}{n}\cdot \Big(\log \big(((s_n+1)^3+2(s_n+1)^2+s_n+1)2^{M(s_n+1)^{\tau+\epsilon}}\big)+1\Big)\\
 &\le \limsup_{n\rightarrow+\infty}\frac{1}{n}\cdot (s_n+1)^{\tau+\epsilon}\cdot M\log 2=0.
\end{align*}
That is, $h^S_{\mu}(T,\alpha)=0$, a contradiction with $S\in \mathcal{P}_\mu(T,\alpha)$.
This implies $\overline{D}_\mu(T,\alpha)\le \tau$.
\end{proof}

\subsection{The lower bound.}\ \

For $A,B\subset\Z$, let $A+B\triangleq\{a+b: a\in A,b\in B\}$ and let $|A|$ denote the number of integers in $A$.
Recall that $w_n$ is given in \eqref{condition0hw}. For $t\ge 1$, let
\begin{align}\label{def-fl}
&F_0^t=\{0,w_{n_t},2w_{n_t},\cdots,(e_{n_t}-1)w_{n_t}\}, \nonumber\\
&F_{k}^t=F_{k-1}^t+\{0,w_{n_t+k},2w_{n_t+k},\cdots,(e_{n_t+k}-1)w_{n_t+k}\} \text{ for } k\ge 1,\text{ and }\\
&F^t=\bigcup \limits_{k=0}^{+\infty} {F}_k^t\nonumber.
\end{align}
We have $F_0^t\subset F_1^t\subset \cdots$ and $|{F}_k^t|=e_{n_{t}}e_{n_{t}+1}\cdots e_{n_{t}+k}.$
\begin{lemma}\label{prop-F}
For any $t\ge 1$, $D(F^t)=\tau$.
\end{lemma}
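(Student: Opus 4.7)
The plan is to obtain a uniform two-sided estimate for the $n$th element $f_n$ of $F^t = \{f_1 < f_2 < \cdots\}$ in terms of the auxiliary quantities $A_n := e_0 e_1 \cdots e_n$ and $h_{n+1} = w_n e_n$, and then read off both dimensions from the normalizations \eqref{condition1} and \eqref{condition3}. The only analytic inputs I need are $h_{n+1} \sim A_n^{1/\tau}$ (from \eqref{condition1}), $w_n \ge A_{n-1}^{1/\tau}$ (from \eqref{condition3}), and the fact that $A_n$ grows super-exponentially while $e_n$ grows only polynomially (by \eqref{condition2}), so every polynomial factor in $e_n$ is dominated by any positive power of $A_n$.

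First, I would verify by induction on $k$ that the sums defining $F_k^t$ are all distinct. Since $w_{n+1} \ge \tilde h_{n+1} = r_{n+1} e_n w_n$, the ratios $h_{n+2}/h_{n+1} \ge r_{n+1} e_{n+1}$ are uniformly $\ge 4$, so $\max F_{k-1}^t \le \sum_{j<k} h_{n_t+j+1}$ is a geometric tail dominated by its last term $h_{n_t+k}$, which is in turn $\le w_{n_t+k}/r_{n_t+k} < w_{n_t+k}$. Thus each new ``digit'' strictly exceeds the previous accumulated maximum. This gives
\[
|F_k^t| = \prod_{j=0}^k e_{n_t+j} = \frac{A_{n_t+k}}{A_{n_t-1}}, \qquad \max F_k^t \le 2h_{n_t+k+1},
\]
and every element of $F_k^t\setminus F_{k-1}^t$ is at least $w_{n_t+k}$. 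Consequently, for every $n$ with $|F_{k-1}^t| < n \le |F_k^t|$,
\[
w_{n_t+k} \le f_n \le 2 h_{n_t+k+1}, \qquad \frac{A_{n_t+k-1}}{A_{n_t-1}} < n \le \frac{A_{n_t+k}}{A_{n_t-1}}.
\]

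Substituting the asymptotics $w_{n_t+k} \ge A_{n_t+k-1}^{1/\tau}$ and $h_{n_t+k+1}\le CA_{n_t+k}^{1/\tau}$ (for some absolute $C$ and all large $k$) yields the comparisons I need. For the upper bound $\overline D(F^t)\le\tau$, fix $\tau' > \tau$; then
\[
\frac{n}{f_n^{\tau'}} \le \frac{A_{n_t+k}/A_{n_t-1}}{A_{n_t+k-1}^{\tau'/\tau}} = \frac{e_{n_t+k}}{A_{n_t-1}}\, A_{n_t+k-1}^{\,1-\tau'/\tau},
\]
which tends to $0$ because the (negative) super-exponential factor beats the polynomial $e_{n_t+k}$, giving $\overline D(F^t,\tau')=0$. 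For the lower bound $\underline D(F^t)\ge\tau$, fix $\tau' < \tau$; the other side of the same bracket gives
\[
\frac{n}{f_n^{\tau'}} \ge \frac{A_{n_t+k-1}/A_{n_t-1}}{(2C)^{\tau'} A_{n_t+k}^{\tau'/\tau}} = \frac{A_{n_t+k-1}^{\,1-\tau'/\tau}}{A_{n_t-1}\,(2C)^{\tau'}\,e_{n_t+k}^{\tau'/\tau}} \longrightarrow +\infty
\]
by the same super-exponential vs.\ polynomial comparison, so $\underline D(F^t,\tau')=+\infty$. Combining, $\underline D(F^t)=\overline D(F^t)=\tau$, i.e.\ $D(F^t)=\tau$.

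The main obstacle is really Step~1: extracting from the sumset structure and the recursion $w_{n+1} \ge r_{n+1}e_n w_n$ the clean two-sided bracket $w_{n_t+k}\le f_n \le 2h_{n_t+k+1}$ valid for \emph{all} indices $n$ in the range $(|F_{k-1}^t|,|F_k^t|]$, not just the endpoints. Once that bracket is in place, Steps~4--5 are essentially a substitution of \eqref{condition1} and \eqref{condition3} plus the observation that $\log A_n$ grows strictly faster than $\log n$.
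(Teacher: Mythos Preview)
Your proposal is correct and follows essentially the same strategy as the paper: bracket the $n$th element of $F^t$ between consecutive height parameters and the index $n$ between consecutive products $e_{n_t}\cdots e_{n_t+k}$, then feed in \eqref{condition1}--\eqref{condition3} to evaluate $\underline D(F^t,\tau')$ and $\overline D(F^t,\tau')$ for $\tau'$ on either side of $\tau$. The paper obtains the slightly sharper upper bound $\max F_k^t < h_{n_t+k+1}$ directly from $\max F_{k-1}^t < w_{n_t+k}$ and $(e_{n_t+k}-1)w_{n_t+k}=h_{n_t+k+1}-w_{n_t+k}$, so no factor~$2$ is needed, but your looser bound $2h_{n_t+k+1}$ works equally well; you are a bit more explicit than the paper about why the sumsets have the claimed cardinality, which is a plus.
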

\begin{proof}
Given $t\ge 1$, let $F^t=\{t_1<t_2<\cdots\}$. For any $n\in\N$, there exists a unique $k=k(n)$ such that $t_n\in F_{k+1}^t\setminus F_k^t$.
Then $$e_{n_{t}}e_{n_{t}+1}\cdots e_{n_{t}+k}< n\le e_{n_{t}}e_{n_{t}+1}\cdots e_{n_t+k+1}$$
and $$h_{n_t+k+1}< t_n\le h_{n_t+k+2}.$$

For any $\tau'$ with $0\le\tau' < \tau,$
\begin{align} \label{es-key1}
\underline{D}(F^t,\tau')&=\liminf_{n\rightarrow+\infty}\frac{n}{(t_n)^{\tau'}}\nonumber\\
&\ge \liminf_{k\rightarrow+\infty}\frac{e_{n_{t}}e_{n_{t}+1}\cdots e_{n_{t}+k}}{(h_{n_t+k+2})^{\tau'}}
=\liminf_{k\rightarrow+\infty}\frac{e_{n_{t}}e_{n_{t}+1}\cdots e_{n_{t}+k}}{(w_{n_{t}+k+1}e_{n_{t}+k+1})^{\tau'}} \nonumber\\
&=\liminf_{k\rightarrow+\infty}\frac{e_0e_1\cdots e_{n_{t}+k+1}}{(w_{n_{t}+k+1}e_{n_{t}+k+1})^{\tau}}\cdot
\frac{(w_{n_{t}+k+1}e_{n_{t}+k+1})^{\tau-\tau'}}
{e_0e_1\cdots e_{n_t-1}e_{n_{t}+k+1}}\nonumber\\
&=+\infty.
\end{align}
We note that the last equality comes from \eqref{condition1} and \eqref{condition4}.
Hence $\underline D(F^t)\ge\tau'$. Since this inequality is true
for any $\tau'\in [0,\tau)$, we have $\underline{D}(F)\ge \tau$.

For any $\tau'$ with $\tau< \tau'<1,$
\begin{align} \label{es-key2}
\overline{D}(F^t,\tau')&=\limsup_{n\rightarrow+\infty}\frac{n}{(t_n)^{\tau'}}\nonumber\\
&\le \limsup_{k\rightarrow+\infty}\frac{e_{n_{t}}e_{n_{t}+1}\cdots e_{n_t+k+1}}{(h_{n_t+k+1})^{\tau'}}
=\limsup_{k\rightarrow+\infty}\frac{e_{n_{t}}e_{n_{t}+1}\cdots e_{n_t+k+1}}{(w_{n_t+k}e_{n_t+k})^{\tau'}} \nonumber\\
&=\limsup_{k\rightarrow+\infty}\frac{e_0e_1\cdots e_{n_t+k}}{(w_{n_t+k}e_{n_t+k})^{\tau}}\cdot
\frac{e_{n_t+k+1}}
{e_0e_1\cdots e_{n_t-1}\cdot(w_{n_t+k}e_{n_t+k})^{\tau'-\tau}} \nonumber\\
&=0,
\end{align}
where the last equality comes again from \eqref{condition1} and \eqref{condition4}.
So $\overline D(F^t)\le\tau'$. Since this inequality is true
for any $\tau'\in(\tau,1)$, we have $\overline{D}(F^t)\le \tau$. Hence $D(F^t)=\tau$.
\end{proof}

\begin{lemma}\label{lemma-independent1}
Given $t>0$ and $k\ge0$, let $B\subset {F}_k^t$ and $E_b$ be a level set in $W_{l_{t}}$ for $b\in B$ ($E_b$'s need not to have different names), then
\begin{align}\label{equation-independent1}
&\mu(\bigcap_{b\in B}T^{-b}E_b)\le(1+\frac{h_{l_{t}}}{c_{n_{t}}})^{|B|}\cdot(\frac{1}{\xi_{l_t}})^{|B|}\prod_{b\in B}\mu(E_b).
\end{align}
Moreover, let $U_b$ be a union of finite many level sets in $W_{l_t}$ for $b\in B$, then
\begin{align}\label{equation-independent2}
&\mu(\bigcap_{b\in B}T^{-b}U_b)\le(1+\frac{h_{l_{t}}}{c_{n_{t}}})^{|B|}\cdot(\frac{1}{\xi_{l_t}})^{|B|}\prod_{b\in B}\mu(U_b).
\end{align}
\end{lemma}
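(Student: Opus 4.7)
My approach is to prove \eqref{equation-independent1} by induction on $k \ge 0$, exploiting the independence structure built into the successive cutting-and-stacking operations; the second inequality \eqref{equation-independent2} will then drop out by a routine union-bound argument.

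For the base case $k = 0$, the key is the explicit parametrization of $W_{n_t+1} = Ins(\tilde W_{n_t}, e_{n_t}, h_{l_t})$: each column of $W_{n_t+1}$ is labelled by a tuple $(i_1, \ldots, i_{e_{n_t}+1}) \in \{1,\ldots,c_{n_t}\}^{e_{n_t}+1}$, where the first $e_{n_t}$ coordinates independently choose the $\tilde W_{n_t}$-columns placed in the successive slots of height $w_{n_t}$, and the spacer offset in slot $k$ is $\ell_k = i_{k+1} \bmod h_{l_t}$. For $b = m\,w_{n_t} \in B \subset F_0^t$, the event $T^{-b}E_b$ depends only on $i_{m+1}$ and $\ell_{m+1}$, so events indexed by distinct values of $m$ involve disjoint coordinates and are therefore independent. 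Summing column contributions and invoking (i) the approximate uniformity of $\ell$ on $\{0,\ldots,h_{l_t}-1\}$ with relative error at most $h_{l_t}/c_{n_t}$ (observed directly after the definition of $Ins$), and (ii) the equality $\xi_{l_t} = \xi_{n_t}$ (which holds because no spacer-insertion step lies strictly between $l_t$ and $n_t$, by the interlacing $l_t < n_t < l_{t+1}$), yields \eqref{equation-independent1} in the case $k = 0$.

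For the inductive step, I invoke \eqref{def-fl} to write $B \subset F_k^t$ as a disjoint union $B = \bigsqcup_{j=0}^{e_{n_t+k}-1}(B_j + j\,w_{n_t+k})$ with $B_j \subset F_{k-1}^t$. The passage $\tilde W_{n_t+k} \to W_{n_t+k+1}$ creates $e_{n_t+k}$ slots of height $w_{n_t+k}$ whose contents are independently chosen $\tilde W_{n_t+k}$-columns (augmented by an independent spacer pattern if $n_t+k = n_s$ for some $s > t$). Each shift by an element of $B_j + j\,w_{n_t+k}$ reduces, inside slot $j+1$, to a shift by the corresponding element of $B_j$ acting on $\tilde W_{n_t+k}$. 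Hence $\mu\bigl(\bigcap_{b \in B} T^{-b}E_b\bigr)$ factorizes over the $e_{n_t+k}$ slots, and applying the inductive hypothesis inside each slot gives a product bound; multiplying the slot contributions and using $\sum_j |B_j| = |B|$ reproduces the required exponents on $(1 + h_{l_t}/c_{n_t})$ and on $1/\xi_{l_t}$, while $\prod_b \mu(E_b)$ distributes across $j$.

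The main technical obstacle is establishing this clean slot-wise factorization when an intermediate spacer insertion occurs at a step $n_t + k = n_s$ with $s > t$. The resolution is that the independence among the $e_{n_t+k}$ slot contents at scale $w_{n_t+k}$ is exact at the level of column choices, while the extra spacer offset at step $n_s+1$ perturbs only the internal structure of each slot; the shifts in $B$ occurring at scale $w_{n_t+k}$ move between whole slots and therefore see this exact independence, so no approximation factor beyond the single $(1 + h_{l_t}/c_{n_t})$ coming from step $n_t+1$ enters the estimate. Finally, \eqref{equation-independent2} follows from \eqref{equation-independent1} by writing each $A_b$ as a disjoint union $A_b = \bigsqcup_i E_{b,i}$ of level sets of $W_{l_t}$, expanding $\bigcap_b T^{-b}A_b = \bigsqcup_{(i_b)_b} \bigcap_b T^{-b}E_{b,i_b}$, summing \eqref{equation-independent1} over tuples $(i_b)_b$, and using $\sum_i \mu(E_{b,i}) = \mu(A_b)$.
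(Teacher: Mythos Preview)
Your inductive scheme on $k$ is structurally more ambitious than the paper's route: there the authors argue only the case $|B|=2$ explicitly, writing $B=\{0,b\}$ and bounding the conditional probability that $T^{b}x\in E_b$ given $x\in E_0$ by $\tfrac{1}{c_{l_t}}\bigl(\tfrac{1}{h_{l_t}}+\tfrac{1}{c_{n_t}}\bigr)$ via the approximate uniformity of the spacer offsets modulo $h_{l_t}$, after which the general $|B|$ is deferred to ``similar discussion''. Your decomposition $B=\bigsqcup_j (B_j+jw_{n_t+k})$ and the slot-wise factorization across the $Ind$ step is the right organizing principle, and the observation $\xi_{l_t}=\xi_{n_t}$ is correct and useful.

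There is, however, a concrete gap in your base case. You claim that for $b=m\,w_{n_t}$ the relevant event depends only on $(i_{m+1},\ell_{m+1})$ and hence that events for distinct $m$ involve \emph{disjoint} coordinates of the tuple $(i_1,\ldots,i_{e_{n_t}+1})$. But by your own formula $\ell_{m+1}=i_{m+2}\bmod h_{l_t}$, so the slot-$(m{+}1)$ event is a function of the pair $(i_{m+1},i_{m+2})$ while the slot-$(m{+}2)$ event is a function of $(i_{m+2},i_{m+3})$; consecutive events share the coordinate $i_{m+2}$ and are \emph{not} independent under the uniform measure on tuples. This is exactly where the design of the $Ins$ operation matters: the spacer offset in a slot is deliberately tied to the \emph{next} column index, producing a $1$-dependent chain rather than an i.i.d.\ family, and $1$-dependent events do not in general satisfy a product upper bound. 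A genuine argument is needed here---for instance, one may condition on all residues $(i_2\bmod h_{l_t},\ldots,i_{e_{n_t}+1}\bmod h_{l_t})$ simultaneously, so that the slot events become conditionally independent, and then control the near-uniformity of the column choice $i_{m+1}$ within its prescribed residue class; the factor $(1+h_{l_t}/c_{n_t})^{|B|}$ is precisely what should absorb that near-uniformity error. But this must be carried out rather than replaced by an appeal to a disjointness that does not hold.
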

\begin{proof}
Assume $B=\{b_1,b_2,\cdots,b_m\}$ and $E_{b_i}$'s ($i=1,2,\cdots,m$) be level sets in $W_{l_{t}}$. Then every $E_{b_i}$ satisfies that
$$\mu(E_{b_i})=\frac{\xi_{l_t}}{c_{l_t}\cdot h_{l_t}}.$$

Notice that the level sets $E_{b_i}$'s in $W_{l_{t}}$ are all spread out into many much smaller level sets after
sufficiently large steps. For the small level sets $A_1$'s from $E_{b_1}$, to ensure the level sets $T^{b_i-b_1}A_1$'s ($2\le i \le m$) are from $E_{b_i}$ respectively,
the $W_{l_{t}}-$segment which contains $T^{b_i-b_1}A_1$ must be isomorphic with the column that contains $E_{b_i}$ in $W_{l_{t}}$ for every $2\le i \le m$.
This situation happens with probability $\big (\frac{1}{c_{l_t}}\big )^{m-1}$.
Also we need for each $2\le i \le m$, the positions of $E_{b_i}$'s in $W_{l_{t}}$ coincide with the positions of $E_{b_i}$'s in $W_{l_{t}}-$segments after inserting spacers. Since the numbers of beginning spacers of $W_{l_{t}}-$segments
are uniformly distributed on $\{0,1,\cdots,h_{l_t}-1\}$ within $\frac{h_{l_t}}{c_{n_t}}-$error, at most $(\frac{1}{h_{l_t}}+\frac{1}{c_{n_t}})^{m-1}$-portion of them coincide. So
\begin{align*}
\mu\big(\bigcap_{b\in B}T^{-b}E_b\big)&=\mu\big(T^{-b_1}E_{b_1}\bigcap T^{-b_2}E_{b_2}\bigcap \cdots \bigcap T^{-b_m}E_{b_m}\big)\\
&\le\mu(E_{b_1})\cdot \big (\frac{1}{c_{l_t}}\big )^{m-1}\cdot \big(\frac{1}{h_{l_t}}+\frac{1}{c_{n_t}}\big)^{m-1} \\
&=\big(1+\frac{h_{l_{t}}}{c_{n_{t}}}\big)^{m-1}\cdot\big(\frac{1}{\xi_{l_t}}\big)^{m-1}\mu(E_{b_1})\mu(E_{b_2})\cdots \mu(E_{b_m})\\
&\le\big(1+\frac{h_{l_{t}}}{c_{n_{t}}}\big)^{|B|}\cdot\big(\frac{1}{\xi_{l_t}}\big)^{|B|}\prod_{b\in B}\mu(E_b).
\end{align*}

Since $U_b$ is a disjoint union of level sets in $W_{l_t}$, inequality \eqref{equation-independent2} then follows from \eqref{equation-independent1}.
\end{proof}

\begin{remark}\label{ergodicity}
From the above lemma, for any $p\in \Z^+$ and any two level sets $E$ and $\tilde E$, there exists $n>0$ with $\mu(T^{-np}E\cap \tilde{E})>0$.
We note that the $\sigma-$algebra $\mathcal{B}$ is generated by the level sets. Approximated by the union of these level sets, for any two sets $A$ and $\tilde{A}$
with positive measures, there also exists $n>0$ with $\mu(T^{-np}A\cap \tilde{A})>0$. This implies that $\mu$ is an ergodic measure under $T^p$ for any $p$.
\end{remark}

In the following we will prove the u.d. property for the partition $\{A,A^c\}$, where $A$ is a union of finite many
level sets in $W_{\ell}$ for some $\ell$.
\begin{lemma}\label{lemma-independent2}
Let $\alpha=\{A,A^c\}$, where $A$ is a union of finite level sets in $W_{\ell}$ for some $\ell\in\N$ with $0<\mu(A)\le \frac{1}{2}\xi_{\ell}$.
Then for sufficiently large $t$,
$$\liminf_{n\rightarrow+\infty}\frac{1}{n}H_{\mu}(\bigvee_{i=1}^n
T^{-t_i}\alpha)\ge-\frac{1}{2}\mu(A)\log\frac{\mu(A)}{1-\mu(A)}>0,$$
where $F^t=\{t_1<t_2<\cdots\}$ is given by \eqref{def-fl}.
Hence $F^t$ is an entropy generating sequence of $\alpha$. Moreover, $D_{\mu}(T, \alpha)=\tau$.
\end{lemma}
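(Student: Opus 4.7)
The plan is to verify that $F^t$ itself is an entropy generating sequence of $\alpha$ with the quantitative lower bound claimed, and then to combine this with Lemma \ref{prop-F} ($D(F^t)=\tau$) and Lemma \ref{prop-upper} ($\overline{D}_\mu(X,T)\le\tau$) to deduce $D_\mu(T,\alpha)=\tau$. First I take $t$ large enough that $l_t>\ell$; because every cutting-and-stacking step only refines the existing level sets, each level set of $W_\ell$ is a union of level sets of $W_{l_t}$, so $A$ is such a union, which places us in the hypothesis of Lemma \ref{lemma-independent1}. Since $w_{n_t+k+1}\ge e_{n_t+k}w_{n_t+k}$, the nested sets $F_0^t\subset F_1^t\subset\cdots$ fill a prefix of the ordered enumeration $F^t=\{t_1<t_2<\cdots\}$, so for every $n$ there is some $k=k(n)$ with $\{t_1,\dots,t_n\}\subseteq F_k^t$.

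Next I apply Lemma \ref{lemma-independent1} with $B=\{t_1,\dots,t_n\}$ and each $A_{t_i}\in\{A,X\setminus A\}$. Writing $A^{(0)}=A$, $A^{(1)}=X\setminus A$, $C_\epsilon=\bigcap_{i=1}^n T^{-t_i}A^{(\epsilon_i)}$, and $M_t=(1+h_{l_t}/c_{n_t})/\xi_{l_t}$, this yields $\mu(C_\epsilon)\le M_t^{\,n}\prod_{i=1}^n\mu(A^{(\epsilon_i)})$ for every $\epsilon\in\{0,1\}^n$. Taking $-\log$, multiplying by $\mu(C_\epsilon)$ and summing over $\epsilon$ gives
$$H_\mu\Bigl(\bigvee_{i=1}^n T^{-t_i}\alpha\Bigr)\ge -n\log M_t+nH_\mu(\alpha),$$
since for each fixed coordinate $i$ one has $\sum_{\epsilon:\,\epsilon_i\text{ fixed}}\mu(C_\epsilon)=\mu(T^{-t_i}A^{(\epsilon_i)})=\mu(A^{(\epsilon_i)})$. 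As $t\to\infty$, $\xi_{l_t}\to 1$ and $h_{l_t}/c_{n_t}\to 0$ (the column count $c_{n_t}$ grows super-exponentially in $n_t$, whereas $h_{l_t}$ with $l_t<n_t$ grows much more slowly), so $M_t\to 1$ and in particular $\log M_t\to 0$.

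A direct computation gives
$$H_\mu(\alpha)+\tfrac{1}{2}\mu(A)\log\tfrac{\mu(A)}{1-\mu(A)}=-\tfrac{1}{2}\mu(A)\log\mu(A)-\bigl(1-\tfrac{1}{2}\mu(A)\bigr)\log(1-\mu(A))>0$$
for $0<\mu(A)<1$, so for $t$ sufficiently large the per-step defect $\log M_t$ is strictly smaller than this positive gap; taking the liminf in the previous display then produces the claimed inequality, and in particular $F^t\in\mathcal{E}_\mu(T,\alpha)$. Since $D(F^t)=\tau$ by Lemma \ref{prop-F}, Definition \ref{de-ms-1} gives $\overline{D}^e_\mu(T,\alpha)\ge\overline{D}(F^t)=\tau$ and $\underline{D}^e_\mu(T,\alpha)\ge\underline{D}(F^t)=\tau$; combined with $\overline{D}_\mu(T,\alpha)\le\overline{D}_\mu(X,T)\le\tau$ from Lemma \ref{prop-upper}, this forces $\underline{D}_\mu(T,\alpha)=\overline{D}_\mu(T,\alpha)=\tau$, i.e.\ $D_\mu(T,\alpha)=\tau$.

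The main obstacle is the passage from the near-independence estimate of Lemma \ref{lemma-independent1} to the entropy lower bound. Morally this is the textbook fact that independent random variables have additive entropy, but one must track the multiplicative error $M_t^n$ carefully: the error per coordinate is $\log M_t$, and the proof works only because this defect can be made smaller than the (fixed, positive) gap $H_\mu(\alpha)+\tfrac{1}{2}\mu(A)\log\tfrac{\mu(A)}{1-\mu(A)}$ by choosing $t$ large once and for all, independent of $n$. The role of the construction's parameters is precisely to guarantee that $M_t\to 1$, so that the target bound can be achieved for every admissible $A$.
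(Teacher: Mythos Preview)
There is a genuine gap in your application of Lemma \ref{lemma-independent1}. That lemma requires each $A_b$ to be a union of finitely many level sets of $W_{l_t}$, and while $A$ satisfies this (once $l_t\ge\ell$), its complement $X\setminus A$ does \emph{not}: at stage $l_t$ the tower $|W_{l_t}|$ has measure $\xi_{l_t}<1$, and the unused spacer region $X\setminus|W_{l_t}|$ (of measure $1-\xi_{l_t}>0$) sits inside $A^c$ but is not a union of level sets of $W_{l_t}$. Consequently your bound $\mu(C_\epsilon)\le M_t^{\,n}\prod_i\mu(A^{(\epsilon_i)})$ is unjustified whenever some $\epsilon_i=1$, and the resulting entropy estimate $H_\mu(\bigvee T^{-t_i}\alpha)\ge nH_\mu(\alpha)-n\log M_t$ is too strong.

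The paper's proof circumvents this by applying Lemma \ref{lemma-independent1} only to the coordinates where $A$ appears: for $s\in\{0,1\}^B$ with $B_0(s)=\{b:s_b=0\}$ one first uses the trivial bound $\mu\bigl(\bigcap_{b\in B}T^{-b}A_{s_b}\bigr)\le\mu\bigl(\bigcap_{b\in B_0(s)}T^{-b}A\bigr)$ and then invokes the lemma on the right-hand side. Re-inserting the missing factors $\mu(A^c)^{|B_1(s)|}$ costs an extra $(1/\mu(A^c))^{|B|}$, which after the entropy computation contributes the term $\log(1-\mu(A))$. This is exactly why the final lower bound is $-\tfrac{1}{2}\mu(A)\log\tfrac{\mu(A)}{1-\mu(A)}$ rather than (the larger) $H_\mu(\alpha)-\log M_t$: the loss from handling $A^c$ crudely is built into the stated constant. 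Your argument after the entropy inequality---the computation of the positive gap and the deduction of $D_\mu(T,\alpha)=\tau$ from Lemmas \ref{prop-upper} and \ref{prop-F}---is fine; the fix is to replace the direct appeal to Lemma \ref{lemma-independent1} on mixed atoms by this drop-and-restore step.
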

\begin{proof}
Since $0<\mu(A)\le \frac{1}{2}\xi_{\ell}<\frac{1}{2}$, we have $-\mu(A)\log\frac{\mu(A)}{1-\mu(A)}>0$.
Note that $l_k<n_k$ and $e_k\ge 2$ for any $k\in \mathbb{N}$. By \eqref{condition0hw},\eqref{condition1} and \eqref{cn2}, one has $\lim\limits_{t\rightarrow +\infty} \frac{h_{l_{t}}}{c_{n_{t}}}=0$. Thus combining this fact with $\lim\limits_{n\rightarrow +\infty} \xi_{n}=1$, we can take $t$ sufficiently large such that $l_t\ge\ell$ and
$$\log\Big((1+\frac{h_{l_{t}}}{c_{n_{t}}})\cdot\frac{1}{\xi_{l_t}}\Big)<-\frac{1}{2}\mu(A)\log\frac{\mu(A)}{1-\mu(A)}.$$
For convenience, let $A_0=A,A_1=A^c$.
For any finite subset $B$ of $F^t$ and any finite sequence $s=(s_b)_{b\in B}\in \{0,1\}^{B}$, let
$$B_0(s)=\{b\in B:s_b=0\}\text{ and }B_1(s)=\{b\in B:s_b=1\}.$$
Noticing that $A_0=A$ is a union of finite many level sets in $W_{l_t}$ (we note here that since $A_1=A^c$ is not a union of finite many level sets in $W_{l_t}$,
we can not apply Lemma \ref{lemma-independent1} to $\mu(\bigcap_{b\in B}T^{-b}A_{s_b})$), we have
\begin{align*}
&\mu(\bigcap_{b\in B}T^{-b}A_{s_b})\le \mu(\bigcap_{b\in B_0(s)}T^{-b}A_0)\\
\le&(1+\frac{h_{l_{t}}}{c_{n_{t}}})^{|B_0(s)|}\cdot(\frac{1}{\xi_{l_t}})^{|B_0(s)|}\prod_{b\in B_0(s)}\mu(A_0)\text{ (by Lemma \ref{lemma-independent1}) }\nonumber \\
=&\big(\prod_{b\in B_0(s)}\mu(A_0)\big)\cdot \big(\prod_{b\in B_1(s)}\mu(A_1)\big)\cdot (1+\frac{h_{l_{t}}}{c_{n_{t}}})^{|B_0(s)|}\cdot(\frac{1}{\xi_{l_t}})^{|B_0(s)|}\cdot (\frac{1}{\mu(A_1)})^{|B_1(s)|}\nonumber \\
\le& \big(\prod_{b\in B}\mu(A_{s_b})\big)\cdot (1+\frac{h_{l_{t}}}{c_{n_{t}}})^{|B|}\cdot(\frac{1}{\xi_{l_t}})^{|B|}\cdot (\frac{1}{\mu(A_1)})^{|B|}\nonumber.
\end{align*}

So
\begin{align*}
 &H_{\mu}(\bigvee_{i=1}^{m}T^{-t_i}\alpha)=\sum_{s\in\{0,1\}^m}-\mu(\bigcap_{i=1}^mT^{-t_i}A_{s_i})\log \Big(\mu(\bigcap_{i=1}^mT^{-t_i}A_{s_i})\Big) \\
 \ge& \sum_{s\in\{0,1\}^m}-\mu(\bigcap_{i=1}^mT^{-t_i}A_{s_i})\log\Big(\big(\prod_{i=1}^m\mu(A_{s_i})\big)\cdot (1+\frac{h_{l_{t}}}{c_{n_{t}}})^{m}\cdot(\frac{1}{\xi_{l_t}})^{m}\cdot (\frac{1}{\mu(A_1)})^{m}\Big)\\
 =&\Big(\sum_{s\in\{0,1\}^m}-\mu(\bigcap_{i=1}^mT^{-t_i}A_{s_i})\log\big(\prod_{i=1}^m\mu(A_{s_i})\big)\Big)-
 \log\Big((1+\frac{h_{l_{t}}}{c_{n_{t}}})^{m}\cdot(\frac{1}{\xi_{l_t}})^{m}\cdot (\frac{1}{\mu(A_1)})^{m}\Big).
\end{align*}
Since
\begin{align*}
 &\sum_{s\in\{0,1\}^m}-\mu(\bigcap_{i=1}^mT^{-t_i}A_{s_i})\log\Big(\prod_{i=1}^m\mu(A_{s_i})\Big)\\
 =&\sum_{j=1}^m\sum_{s\in\{0,1\}^m}-\mu(\bigcap_{i=1}^mT^{-t_i}A_{s_i})\log\Big(\mu(A_{s_j})\Big) \\
 =&\sum_{j=1}^m\sum_{s_j\in\{0,1\}}-\mu(T^{-t_j}A_{s_j})\log\Big(\mu(A_{s_j})\Big)\\
 =&mH_{\mu}(\alpha),
\end{align*}
we have
\begin{align}\label{equation-entropy1}
 H_{\mu}(\bigvee_{i=1}^{m}T^{-t_i}\alpha)&\ge mH_{\mu}(\alpha)-\log\Big((1+\frac{h_{l_{t}}}{c_{n_{t}}})^{m}\cdot(\frac{1}{\xi_{l_t}})^{m}\cdot (\frac{1}{\mu(A_1)})^{m}\Big)\nonumber \\
 &> m\Big(H_{\mu}(\alpha)+\frac{1}{2}\mu(A)\log\frac{\mu(A)}{1-\mu(A)}+\log\big(1-\mu(A)\big)\Big) \nonumber \\
 &= m\Big(-\frac{1}{2}\mu(A)\log\frac{\mu(A)}{1-\mu(A)}\Big).
\end{align}

Hence
$$\liminf_{n\rightarrow+\infty}\frac{1}{n}H_{\mu}(\bigvee_{i=1}^n
T^{-t_i}\alpha)\ge-\frac{1}{2}\mu(A)\log\frac{\mu(A)}{1-\mu(A)}>0,$$
which implies that $F^t$ is an entropy generating sequence of $\alpha$.
By Lemma \ref{prop-upper} and \ref{prop-F}, we have
$D_{\mu}(T, \alpha)=\tau$.
\end{proof}

\begin{theorem} $(X,\mathcal{B},\mu,T)$ is a
$\tau$-u.d. system.
\end{theorem}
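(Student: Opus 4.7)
The goal is to show $\mathrm{Dims}_\mu(X,T) = \{\tau\}$, i.e.\ for every $A\in\mathcal{B}$ with $0<\mu(A)<1$ we have $\overline{D}_\mu(T,\{A,X\setminus A\})=\tau$. The upper bound $\overline{D}_\mu(T,\{A,X\setminus A\})\le \overline{D}_\mu(X,T)\le \tau$ is immediate from Lemma~\ref{prop-upper}, so the whole task is to produce the matching lower bound.

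The strategy is to reduce the general case to the case handled by Lemma~\ref{lemma-independent2} via the $L^1$-approximation machinery of Lemma~\ref{lem-esti}. Fix $A$ with $0<\mu(A)<1$, and without loss of generality $\mu(A)\le \tfrac{1}{2}$ (otherwise swap $A$ and $A^c$, which does not change the partition). Set
$$c_0=-\tfrac{1}{2}\mu(A)\log\tfrac{\mu(A)}{1-\mu(A)}>0.$$
By Walters' Lemma 4.15, choose $\delta>0$ such that any two $2$-set partitions $\gamma_1,\gamma_2$ with $\mu(\gamma_1\triangle\gamma_2)<\delta$ satisfy $H_\mu(\gamma_1\mid\gamma_2)+H_\mu(\gamma_2\mid\gamma_1)<c_0/4$. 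Because the level sets of $\{W_n\}_{n\ge 1}$ form a refining sequence that generates $\mathcal{B}$ modulo $\mu$, and because $\xi_\ell\nearrow 1$, one can choose $\ell$ so large, and a union $B$ of level sets of $W_\ell$, such that simultaneously
\begin{enumerate}
\item $\mu(A\triangle B)<\delta$,
\item $|\mu(B)-\mu(A)|$ is small enough that $c_1:=-\tfrac{1}{2}\mu(B)\log\tfrac{\mu(B)}{1-\mu(B)}>c_0/2$,
\item $\mu(B)\le\tfrac{1}{2}\xi_\ell$.
\end{enumerate}
(The third condition is the only delicate one: if $\mu(A)<\tfrac{1}{2}$ strictly it is automatic for $\ell$ large, and if $\mu(A)=\tfrac{1}{2}$ one forces $B\subset A$ and uses $\xi_\ell\to 1$ to make $\tfrac{1}{2}-\mu(B)$ as small as $\tfrac{1-\xi_\ell}{2}$.)

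Let $\beta=\{B,X\setminus B\}$ and $\alpha=\{A,X\setminus A\}$. By condition (iii) and Lemma~\ref{lemma-independent2}, for $t$ sufficiently large the sequence $F^t=\{t_1<t_2<\cdots\}$ defined in \eqref{def-fl} satisfies
$$\liminf_{n\to\infty}\tfrac{1}{n}H_\mu\bigl(\textstyle\bigvee_{i=1}^{n}T^{-t_i}\beta\bigr)\ge c_1>c_0/2,$$
while Lemma~\ref{prop-F} gives $\overline{D}(F^t)=\tau$. Now repeat the standard computation from the proof of Lemma~\ref{lem-esti}, but with the roles of $\alpha$ and $\beta$ exchanged: using $H_\mu(\bigvee_{i=1}^{n}T^{-t_i}\alpha)\ge H_\mu(\bigvee_{i=1}^{n}T^{-t_i}\beta)-n\,H_\mu(\beta\mid\alpha)$ and condition (i), one obtains
$$\liminf_{n\to\infty}\tfrac{1}{n}H_\mu\bigl(\textstyle\bigvee_{i=1}^{n}T^{-t_i}\alpha\bigr)\ge c_1-\tfrac{c_0}{4}>\tfrac{c_0}{4}>0.$$
Hence $F^t\in\mathcal{E}_\mu(T,\alpha)$, which yields $\overline{D}_\mu(T,\alpha)\ge \overline{D}(F^t)=\tau$ and closes the gap.

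The main obstacle I anticipate is purely book-keeping: one must pick the parameters in the correct order, because the $\delta$ that controls the $L^1$-approximation depends on the entropy generating sequence, and that entropy generating sequence comes from the approximating partition $\beta$ rather than from $\alpha$ itself. The trick is to bound the $\liminf$ appearing in Lemma~\ref{lemma-independent2} uniformly from below by a quantity ($c_0/2$) depending only on $\mu(A)$; once that uniform lower bound is fixed, $\delta$ can be fixed first and $\ell$ (hence $B$) chosen afterwards. Everything else (verifying that unions of level sets are dense in $\mathcal{B}$, handling the borderline $\mu(A)=\tfrac{1}{2}$ case, and ensuring (iii) is compatible with (i)) is routine because the towers $\{W_n\}$ refine and exhaust $\mathcal{B}\pmod\mu$ and $\xi_n\to 1$.
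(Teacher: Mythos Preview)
Your approach is essentially the paper's own argument (with the labels $\alpha$ and $\beta$ swapped), and for $0<\mu(A)<\tfrac12$ it goes through exactly as you describe.

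The gap is the borderline case $\mu(A)=\tfrac12$, which you flag as ``routine'' but which in fact breaks your scheme. When $\mu(A)=\tfrac12$ your constant
\[
c_0=-\tfrac12\,\mu(A)\log\frac{\mu(A)}{1-\mu(A)}
\]
equals $0$, so the very first step---choosing $\delta$ so that close partitions satisfy $H_\mu(\gamma_1\mid\gamma_2)+H_\mu(\gamma_2\mid\gamma_1)<c_0/4$---is vacuous, and the chain of inequalities $c_1-\tfrac{c_0}{4}>\tfrac{c_0}{4}>0$ collapses. Your proposed fix (force $B\subsetneq A$, so $\mu(B)<\tfrac12$) does give $c_1>0$, but the transfer step needs $c_1>H_\mu(\beta\mid\alpha)$, and a short computation shows that as $\mu(B)\nearrow\tfrac12$ one has $c_1\sim \tfrac12-\mu(B)$ while $H_\mu(\beta\mid\alpha)\sim -(\tfrac12-\mu(B))\log(\tfrac12-\mu(B))$, so the conditional entropy dominates and the inequality fails.

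The paper resolves this case by a different device: it uses that $T$ and $T^2$ are both ergodic (Remark~\ref{ergodicity}), which forces $\mu(A\cap T^{-1}A)\notin\{0,\tfrac12\}$. Then $\{A\cap T^{-1}A,\,X\setminus(A\cap T^{-1}A)\}$ falls into the already-treated range, and since this partition is coarser than $\alpha\vee T^{-1}\alpha$, Proposition~\ref{prop-basic}(2) transfers the lower bound back to $\alpha$. You should incorporate this reduction (or an equivalent one) to close the argument.
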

\begin{proof} Let $\beta=\{B,X\setminus B\}\in \cal P^2_X$. We first consider the case for
$0<\mu(B)<\frac{1}{2}$. Then $$c(\beta):=-\frac{1}{2}\mu(B)\log\frac{\mu(B)}{1-\mu(B)}>0.$$ For any
$0<\epsilon<\frac{1}{2}c(\beta)$,
 by Lemma 4.15 of
\cite{Wal}, we can choose $\delta>0$ small enough such that
$H_\mu(\beta|\gamma)+H_\mu(\gamma|\beta)<\epsilon$, whenever
$\gamma=\{E,X\setminus E\}\in \mathcal{P}^2_X$ satisfies that $\mu(B\Delta
E)+\mu\big((X\setminus B)\Delta (X\setminus E)\big)<\delta$. When $\ell$ is sufficiently large, there is a
subset $A$ of $X$ which is a union of level sets in $W_{\ell}$
such that $0<\mu(A)\le \frac{1}{2}\xi_{\ell}$ and
$$\mu(A\Delta B)<\frac{\delta}{2}, \mu\big((X\setminus A)\Delta (X\setminus B)\big)<\frac{\delta}{2}.$$ Let
$\alpha=\{A,X\setminus A\}$, then
$H_\mu(\alpha|\beta)+H_\mu(\beta|\alpha)<\epsilon$. Moreover, we can make $c(\alpha)>\frac{1}{2}c(\beta)$ when $\delta$ is sufficiently small,
where $c(\alpha)=-\frac{1}{2}\mu(A)\log\frac{\mu(A)}{1-\mu(A)}>0.$
By Lemma \ref{prop-F} and Lemma \ref{lemma-independent2}, there exists $F=\{t_1<t_2<\cdots\}\subseteq \mathbb{N}$ such that $D(F)=\tau$ and
\begin{align*}
\liminf_{n\rightarrow+\infty}\frac{1}{n}H_{\mu}(\bigvee_{i=1}^n
T^{-t_i}\alpha) \ge c(\alpha).
\end{align*}
Thus we have
\begin{align*}
&\hskip0.5cm \liminf_{n\rightarrow +\infty}\frac{1}{n}H_\mu(\bigvee_{i=1}^n T^{-t_i}\beta) \\
&=\liminf_{n\rightarrow +\infty} \frac{1}{n}\left(H_\mu(\bigvee_{i=1}^n T^{-t_i}(\alpha\vee \beta))
-H_\mu(\bigvee_{i=1}^n T^{-t_i}\alpha|\bigvee_{i=1}^n T^{-t_i}\beta)\right)\\
&\ge \liminf_{n\rightarrow +\infty} \frac{1}{n}\left(
H_\mu(\bigvee_{i=1}^n T^{-t_i}\alpha )-nH_\mu(\alpha|\beta) \right )\\
&\ge c(\alpha)-H_\mu(\alpha|\beta)\ge c(\alpha)-\epsilon \\
&\ge c(\alpha)-\frac{1}{2}c(\beta)>0,
\end{align*}
which means that $F$ is also an entropy generating sequence of $\beta$. Hence $\underline{D}_{\mu}(T, \beta)\ge
\underline{D}(F)\ge\tau$. Combining with Lemma
\ref{prop-upper}, $D_{\mu}(T, \beta)=\tau$.

Next we consider the case for $\mu(B)=\frac{1}{2}$. Assume $\underline{D}_{\mu}(T, \beta)<\tau$.
Since $\mu$ is ergodic under both $T$ and $T^2$ (Remark \ref{ergodicity}), we have $\mu(B\cap T^{-1}B)\neq 0, \frac{1}{2}$.
Thus either $0<\mu(B\cap T^{-1}B)<\frac{1}{2}$ or $0<\mu(X\setminus (B\cap T^{-1}B))<\frac{1}{2}$. From the previous case,
we have $D_{\mu}\Big(T, \{B\cap T^{-1}B, X\setminus (B\cap T^{-1}B)\}\Big)=\tau$.
Noticing that $\{B\cap T^{-1}B, X\setminus (B\cap T^{-1}B)\}\preceq \beta\bigvee T^{-1}\beta$, by (1) and (2) of Proposition \ref{prop-basic},
we have
\begin{align*}
&\underline{D}_{\mu}\Big(T, \{B\cap T^{-1}B, X\setminus (B\cap T^{-1}B)\}\Big)\\
\le&\underline{D}_{\mu}(T, \beta\bigvee T^{-1}\beta)\\
=&\underline{D}_{\mu}(T, \beta)<\tau,
\end{align*}
which leads a contradiction. Hence we still have $D_{\mu}(T, \beta)=\tau$.

Since $\beta$ is arbitrary, we conclude that $(X,\mathcal{B},\mu,T)$ is a $\tau-$u.d. system.
\end{proof}

\begin{remark}
By the similar method, we can also choose suitable parameters such
that $(X,\cal B,T,\mu)$ is a $1-$u.d. system with zero entropy.
\end{remark}

{\bf Acknowledgements}
The authors would like to express their gratitude to the referees
for their valuable suggestions and comments.
The first author is supported by NNSF of China (Grant No. 10901080, 11271191, 11790274) and he also would like to thank BK21 Program of Department of
Mathematics, Ajou University for the pleasant support while we had started this work. The second author is supported by NNSF of China (Grant No. 11225105 and 11431012). The third author is
supported in part by NRF 2010-0020946.

\end{document}